\chardef\bslash=`\\ % p. 424, TeXbook
\def\verbatim{\interlinepenalty\@M \@verbatim
   \leftskip\@totalleftmargin\advance\leftskip2pc
   \frenchspacing\@vobeyspaces \@xverbatim}
\newtheorem{thm}{Theorem}[section]
\newtheorem{coro}[thm]{Corollary}
\newtheorem{lem}[thm]{Lemma}
\newtheorem{prop}[thm]{Proposition}
\theoremstyle{definition}
\newtheorem{defi}{Definition}[section]
\theoremstyle{remark}
\newtheorem{obs}{Remark}[section]
\numberwithin{equation}{section}
\newcommand{\begeq}{\begin {equation}}
\newcommand{\eq}{\end{equation}}
\newcommand{\bs}{\begin {split}}
\newcommand{\es}{\end{split}}
\newcommand{\bp}{\begin {prop}}
\newcommand{\ep}{\end {prop}}
\newcommand{\bt}{\begin {thm}}
\newcommand{\et}{\end {thm}}
\newcommand{\bc}{\begin {cor}}
\newcommand{\ec}{\end {cor}}
\newcommand{\bl}{\begin {lem}}
\newcommand{\el}{\end {lem}}
\newcommand{\bpf}{\begin {proof}}
\newcommand{\epf}{\end {proof}}
\newcommand{\bi}{\begin {itemize}}
\newcommand{\ei}{\end {itemize}}
\newcommand{\ben}{\begin {enumerate}}
\newcommand{\een}{\end {enumerate}}
\newcommand{\brem}{\begin {rem}}
\newcommand{\erem}{\end {rem}}
\newcommand{\norm}[1]{\left\|{#1} \right\|}
\newcommand{\la}{\langle}
\newcommand{\ra}{\rangle}
\newcommand{\mathds}{\mathbb}
\newcommand{\HH}{{\mathcal H}}
\newcommand{\ZZ}{{\mathbb Z}}
\newcommand{\RR}{{\mathbb R}}
\newcommand{\NN}{{\mathbb N}}
\newcommand{\GG}{\ZZ^d}
\newcommand{\lp}{{\ell^p} }
\newcommand{\lt}{{\ell^2} }
\numberwithin{equation}{section}
\begin{document}
\title{\bf On Stability of Sampling-Reconstruction Models}

\author{Ernesto Acosta-Reyes, Akram Aldroubi, and Ilya Krishtal}
\address{Dept. of Mathematics, Vanderbilt University, Nashville, TN 37240 \\
email: ernesto.acosta@vanderbilt.edu}
\address{Dept. of Mathematics, Vanderbilt University, Nashville, TN 37240 \\
email: akram.aldroubi@vanderbilt.edu}
\address{Dept. of Mathematical Sciences, Northern Illinois University, DeKalb, IL 60115 \\
email: krishtal@math.niu.edu}

\footnotetext{\textit{Math Subject Classifications.}
                       42C15.}
\keywords{Irregular sampling, non-uniform sampling, sampling, reconstruction, jitter, measurement error, model error.}
\thanks{ The second author was supported in part by NSF grants DMS-0504788.}

\maketitle

\begin{abstract}
A useful sampling-reconstruction model should be stable with respect to different kind
of small perturbations, regardless whether they result from jitter,
measurement errors, or simply from a small change in the model assumptions.
In this paper we prove this result for a large class of sampling models.
We define different classes of perturbations and quantify the robustness of
a model with respect to them. We also use the theory of localized frames
to study the frame algorithm for recovering the original signal from its samples.

\end{abstract}

\section{Introduction}

The sampling and reconstruction problem includes devising efficient methods for
representing a signal (function) in terms of a discrete (finite or countable) set of its samples (values) and
reconstructing the original signal from the samples (see e.g., \cite {aa,aagr1, BF01,BS01, NW91,U00} and the reference therein). In this paper we consider a very general
sampling model where the signal is assumed to belong to a finitely generated shift invariant space
and the sampling is performed on an irregular separated set and is averaged by finite Borel measures.
The main focus of this paper is on describing and quantifying ``admissible'' perturbations of the sampling model
which may result from altering the sampling set (jitter) (see e.g. \cite {aacl,ABK02,FS06}),  or the averaging sampling measures (measuring devices)
or the generators of the underlying shift-invariant space (see e.g., \cite {aaik,SUN06}).

As recently became customary in sampling theory (see e.g. \cite{aa,aagr1,CIS02,SZ03,SZ03b,S05,U00}), we mesh operator theory techniques and those of shift invariant
and Wiener amalgam spaces \cite{hgf}. The latter provide us with relatively straight-forward proofs %and sharper estimates
while the former allow us to keep in sight our objective. In section 2 we show that all the properties of
our sampling model can be encoded in the sampling operator $U$. The sampling model admits reconstruction
if its sampling operator is bounded both above and below. Our first goal is to show that any and all of the small perturbations
mentioned above result in a small perturbation of $U$ in the operator norm.
This will prove the stability of sampling in our model with respect to those perturbations and  the corresponding estimates we obtain
will quantify this stability. Our second goal is to show how a frame algorithm can be used to reconstruct signals
in our sampling model. Finally, our last goal is to show that the reconstruction error due to the perturbations we describe
is controlled continuously by the perturbation errors.

The paper is organized as follows. In section 2 we describe our sampling model,
introduce relevant notions and notation, and cite a few preliminary results.
 The main results are presented in section 3.
Perturbation results %with a fixed sampling set
addressing our first goal
are in subsection 3.1. There we prove that a set of
sampling remains such under a small perturbation of
the sampling measures and/or the generators of the shift invariant space. It is also shown
that sampling remains stable with respect to a perturbation of the sampling set itself.
In subsection 3.2 we show that, in case of a signal in a Hilbert space,
a frame algorithm can be used to reconstruct the function from its samples. We also
use the results of the previous subsection and the theory of localized frames
to show that under mild additional assumptions
a set of sampling for a Hilbert shift invariant space is also
a set of sampling for a chain of Banach shift invariant spaces to which the frame algorithm extends.
In subsection 3.3 we study the dependence of the reconstruction error upon the perturbation errors. The proofs of
the results in section 3 are relegated to section 4.

%__________________________________________________________________________________________________________________________
%Section2:
%\section{Notation and preliminaries}

 \section{Description of the sampling model}

This section is primarily devoted to introduction of the sampling model we use in this paper. We also present most of the necessary notation and cite
some of the preliminary results that will be used later.

The signals we are studying in this paper are represented by functions $f\in L^p(\mathbb{R}^d)$,
for some $p\in[1,\infty]$ and $d\in\NN$. Moreover, we assume that $f$ belongs to
a shift invariant space

\begin{equation}\label{1.1}
V^{p}(\Phi)=\{\sum_{k\in \mathbb{Z}^{d}}C_{k}^{T}\Phi_{k}:
C\in(\ell^p(\mathbb{Z}^d))^{(r)}\}.
\end{equation}
Here
$\Phi=(\phi^{1},\ldots,\phi^{r})^{T}$ is a vector of functions,
$\Phi_{k}=\Phi(\cdot-k)$, and $C=(c^{1},\ldots,c^{r})^{T}$ is a
vector of sequences belonging to $(\ell^p(\mathds{Z}^d))^{(r)}$. Among
the equivalent norms in $(\ell^p(\mathds{Z}^d))^{(r)}$ we choose
\begin{displaymath}
\label {ellpnorm}
\|C\|_{(\ell^p(\mathds{Z}^d))^{(r)}}=\sum_{i=1}^{r}\|c^{i}\|_{\ell^p(\mathds{Z}^d)}.
\end{displaymath}
In order to avoid convergence issues in \eqref{1.1} we assume that the
set $\{\phi^{1}(\cdot-k),\ldots,\phi^{r}(\cdot-k);
k\in\mathbb{Z}^d\}$ generates an unconditional basis for
$V^{p}(\Phi)$. In particular, we require that there exist
constants $0<m_{p}\leq M_{p}<\infty,$ such that %, for all $,

\begin{equation}\label{2.1.1}
m_{p}\|C\|_{(\ell^{p}(\mathbb{Z}^d))^{(r)}} \leq
\|\sum_{k\in\mathbb{Z}^d}C_{k}^{T}\Phi_{k}\|_{L^p} \leq
M_{p}\|C\|_{(\ell^{p}(\mathbb{Z}^d))^{(r)}},\ \forall C \in
(\ell^{p}(\mathbb{Z}^d))^{(r)}.
\end{equation}

The unconditional basis assumption (\ref{2.1.1}) implies \cite{aagr1} that the
space $V^{p}(\Phi)$ is a closed subspace of $L^p(\mathbb{R}^d).$

Since we are interested in sampling in $V^p(\Phi)$ we add
an assumption that would make all the functions in these spaces continuous and,
therefore, pointwise evaluations will be meaningful. To this end, we assume that
all generators $\Phi$ belong to a Wiener-amalgam
space $(\mathit{W}_{0}^{1})^{(r)}$ as defined below. For $1 \leq p
<\infty$, a measurable function $f$ belongs to $\mathit{W}^{p}$ if
it satisfies
\begin{eqnarray}\label{2.1.2}
\|f\|_{\mathit{W}^p}=\left(\sum_{k\in\mathbb{Z}^d}
\underset{{x\in[0,1]^{d}}}{\operatorname{esssup}}\,|f(x+k)|^{p}\right)^{1/p}<\infty.
\end{eqnarray}
If $p=\infty$, a measurable function $f$ belongs to
$\mathit{W}^{\infty}$ if it satisfies
\begin{eqnarray}\label{2.1.3}
\|f\|_{\mathit{W}^{\infty}}=\sup_{k\in\mathbb{Z}^d}\{\underset{{x\in[0,1]^{d}}}{\operatorname{esssup}}\,|f(x+k)|\}<\infty.
\end{eqnarray}
Hence, $\mathit{W}^{\infty}$ coincides with
$L^{\infty}(\mathbb{R}^d)$. It is well known that $\mathit{W}^{p}$
are Banach spaces \cite{hgf}, and clearly $\mathit{W}^{p} \subseteq
L^p$. By $(\mathit{W}^{p})^{(r)}$ we denote the space of
vectors $\Psi=(\psi^{1},\ldots,\psi^{r})^{T}$ of
$\mathit{W}^{p}$-functions with the norm
\begin{displaymath}
\|\Psi\|_{(\mathit{W}^{p})^{(r)}}=\sum_{i=1}^{r}\|\psi^{i}\|_{\mathit{W}^{p}}.
\end{displaymath}
The closed subspace of (vectors of) continuous functions in
$\mathit{W}^{p}$ (respectively, $(\mathit{W}^{p})^{(r)}$) will be
denoted by $\mathit{W}_{0}^{p}$ (or ($\mathit{W}_{0}^{p})^{(r)}$).

In this paper we are interested in average
sampling performed by a vector of measures. We denote by  $\mathcal{M}(\mathbb{R}^d)=\mathcal{M}_0(\mathbb{R}^d)$
the Banach space of finite complex Borel measures on $\mathbb{R}^d$. The norm on
$\mathcal{M}(\mathbb{R}^d)$ is given by $\|\mu\|=\int_{\mathbb{R}^d}d|\mu|(y)$, i.e., the total variation of
a measure $\mu$.
By $(\mathcal{M}(\mathbb{R}^d))^{(t)}$ we denote the space of vectors
$\overrightarrow{\mu}=(\mu^{1},\ldots,\mu^{t})$ of measures from $\mathcal M(\mathbb{R}^d)$ with the norm
$\|\overrightarrow{\mu}\|_{(\mathcal{M}(\mathds{R}^d))^{(t)}}=\sum_{j=1}^{t}\|\mu^{j}\|$.
The symbols $\mathcal{M}_{s}(\mathbb{R}^d)$
($(\mathcal{M}_{s}(\mathds{R}^d))^{(t)}$), $0\le s<\infty$, will be used for the subspace of
$\mathcal{M}(\mathbb{R}^d)$ ($(\mathcal{M}(\mathbb{R}^d))^{(t)}$) of
all (vectors of) measures $\mu\in \mathcal{M}(\mathbb{R}^d)$ such that $(1+|x|)^s \in L^1(\RR^d, d|\mu|)$, i.e.,
$\int (1+|x|)^s d|\mu|(x) < \infty$. By
$\mathcal{M}_{\infty}(\mathbb{R}^d)$ ($(\mathcal{M}_{\infty}(\mathbb{R}^d))^{(t)}$) we denote the space of
all (vectors of) measures with compact support.
Clearly $\mathcal{M}_{s}(\mathbb{R}^d)\subset \mathcal{M}_{r}(\mathbb{R}^d)$ for $0\le r\le s \le\infty$.

 For $\mu \in
\mathcal{M}(\mathbb{R}^d)$ and a measurable function $\phi$ on
$\mathbb{R}^d$,  the convolution of the function $\phi$ and
the measure $\mu$ is defined by
\begin{displaymath}
(\phi\ast\mu)(x)=\int_{\mathbb{R}^d}\phi(x-y)d\mu(y), %\quad\textrm{for
%all}\quad
\quad x\in\mathbb{R}^d.
\end{displaymath}
When we have a vector of measurable functions
$\Phi=(\phi^1,\ldots,\phi^r)^{T}$ and a vector of finite complex Borel
measures $\overrightarrow{\mu}=(\mu^{1},\ldots,\mu^{t})$, then the
convolution $\Phi\ast\overrightarrow{\mu}$ is the $r\times t$ matrix
given by
\begin{displaymath}
\Phi\ast\overrightarrow{\mu}=\left(\begin{array}{ccc}
\phi^{1}\ast\mu^{1}&
\ldots&\phi^{1}\ast\mu^{t}\\
\vdots&&\vdots\\
\phi^{r}\ast\mu^{1}&\ldots&\phi^{r}\ast\mu^{t}
\end{array}\right).
\end{displaymath}
%Let $X=\{x_{j}: j \in J\}$ be a countable subset of
%$\mathbb{R}^d$, then we have the following definition.

Let $J$ be a countable index set
and $X=\{x_{j}: j \in J\}$ be a subset of $\mathbb{R}^d$.
The reconstruction problem in our sampling model consists of finding the
function $f\in V^p(\Phi)$ from the knowledge of its samples
\begin{displaymath}
%\{s_{x_{j}}(f)
(f\ast\overrightarrow{\mu})(X)=\{(f\ast\overrightarrow{\mu})(x_{j})=\left((f\ast\mu^{1})(x_{j}),\ldots,(f\ast\mu^{t})(x_{j})\right)\}_{j
\in J}.
\end{displaymath}

When $t=1$ and $\mu =\delta_{0}$, i.e., $\mu$
is the Dirac measure on $\mathbb{R}^d$ concentrated at zero, then
$(f\ast\overrightarrow{\mu})(X)= \{f(x_{j})\}_{j \in J}$
and we obtain the classical (ideal) sampling model.
When $d\overrightarrow{\mu}=\Psi dx$, where
$\Psi\in(L^1(\mathbb{R}^d))^{(t)}$ and $dx$ is the Lebesgue measure
on $\mathbb{R}^d$, i.e., $\overrightarrow{\mu}$ is absolutely
continuous with respect to the Lebesgue measure, then we write
$(f\ast\Psi)(X)$
instead of $(f\ast\overrightarrow{\mu})(X)$, and  our model is reduced to
the case analyzed in \cite{aaik}.

%These spaces are called shift-invariant.

\begin{defi}\label{def1.2} Let $1 \leq p \leq \infty$ and $X=\{x_{j}: j \in J\}$ be a countable subset of
$\mathbb{R}^{d}$. We say that $X$ is a \emph{set of sampling} for
$V^p(\Phi)$ and $\overrightarrow{\mu}$ (or, simply, a $\overrightarrow{\mu}$-sampling set for $V^p(\Phi)$)
if there exist constants
$0<A_{p}\leq B_{p}<\infty$ such that
\begin{equation}\label{1.2}
A_{p}\|f\|_{L^{p}}\leq
\|(f\ast\overrightarrow{\mu})(X)\|_{(\ell^{p}(J))^{(t)}}\leq
B_{p}\|f\|_{L^{p}},\mbox{  for all } f\in V^{p}(\Phi).
\end{equation}
\end{defi}

If $d\overrightarrow{\mu}=\Psi dx$ then a $\overrightarrow{\mu}$-sampling set $X$ will be called a
$\Psi$-sampling set and, if $t=1$ and $\mu =\delta_{0}$, then $X$ will be called an \emph{ideal} sampling set.
To ensure that an upper bound $B_p$ in \eqref{1.2} always exists (see \eqref{4.1.2}) we restrict our attention
only to separated sets $X$.

\begin{defi}\label{def1.1} We say
that $X$ is \emph{separated} if there exists $\delta>0$ such that
$\inf_{i,j \in J, i\neq j}|x_{i}-x_{j}|\geq\delta$. The number $\delta$ is
called the \emph{separation constant} of the set $X$.
\end{defi}

It is not hard to extend our results to the case of a finite union of separated sets.
 We do not, however, pursue this relatively trivial but space consuming generalization.

%____________________________________________________________

\begin{defi}\label{defSampMod}
Let $\overrightarrow{\mu}\in(\mathcal{M}(\mathbb{R}^d))^{(t)}$,
$\Phi\in(\mathit{W}_{0}^{1})^{(r)}$ satisfy (\ref{2.1.1}), and
$X=\{x_{j},j\in J\}\subset\mathbb{R}^d$ be a separated set.
The \emph{sampling model} is the triple $(X,\Phi,\overrightarrow{\mu})$.
The sampling model $(X,\Phi,\overrightarrow{\mu})$ is called \emph{$p$-stable} if $X$ is a
$\overrightarrow{\mu}$-sampling set for $V^p(\Phi)$, $p\in[1,\infty]$.
\end{defi}

Given a sampling model $(X,\Phi,\overrightarrow{\mu})$ we proceed to define its sampling operator.

\begin{defi}\label{defSampOp}
The \emph{sampling operator}  $U = U_{(X,\Phi,\overrightarrow{\mu})}:(\ell^p(\mathbb{Z}^d))^{(r)}\to (\ell^p(J))^{(t)}$ %that corresponds to the set $X$. Let $U$ be
%the linear operator on $(\ell^p(\mathbb{Z}^d))^{(r)}$ so that
is defined by $UC=(f\ast\overrightarrow{\mu})(X)$, where $f = \sum\limits_{k\in \mathbb{Z}^{d}}C_{k}^{T}\Phi_{k}\in V^p(\Phi)$.
\end{defi}
We can think of $U$ as a
$t\times r$ matrix of operators %given by
\begeq\nonumber
U =\left(
\begin{array}{ccc}
U^{1,1} & \ldots & U^{r,1}\\
\vdots & &\vdots\\
U^{1,t}&\ldots&U^{r,t}
\end{array}\right),
\end{equation}
where for each $1\leq i\leq r$ and $1\leq l\leq t$ the operator
$U^{i,l}$ is defined by an infinite matrix %whose $j$, $k$
with entries $(U^{i,l})_{j,k}=(\phi^{i}\ast\mu^{l})(x_{j}-k)$,
$j\in J$, $k\in\mathbb{Z}^d$. The operator norm of  $U$
is given by $\|U\|_{p,op}=\sum_{l=1}^{t}\sum_{i=1}^{r}\|U^{i,l}\|$.

The following proposition shows that all the interesting properties of a sampling model $(X,\Phi,\overrightarrow{\mu})$
are, indeed, encoded in the sampling operator $U$. The proof of this result follows immediately from \eqref{2.1.1} and \eqref{1.2}.
\begin{prop}\label{prop5.1}
The sampling model $(X,\Phi,\overrightarrow{\mu})$ is $p$-stable
%Assume that (\ref{2.1.1}) holds for
%all $C\in(\ell^p(\mathbb{Z}^d))^{(r)}$. Then $X$ is a sampling set
%for $V^p(\Phi)$ and $\overrightarrow{\mu}$
if and only if there exist $0<\eta_{p}\leq\beta_{p}<\infty$ such
that for all $C\in(\ell^p(\mathbb{Z}^d))^{(r)}$ the sampling
operator $U$ satisfies
\begin{equation}\label{5.1}
\eta_{p}\|C\|_{(\ell^p(\mathbb{Z}^d))^{(r)}}\leq\|UC\|_{(\ell^p(J))^{(t)}}\leq\beta_{p}\|C\|_{(\ell^p(\mathbb{Z}^d))^{(r)}}.
\end{equation}
\end{prop}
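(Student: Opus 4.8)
The plan is to observe that \eqref{5.1} is nothing more than the composition of the two bi-Lipschitz equivalences \eqref{2.1.1} and \eqref{1.2}, so that the proof amounts to chaining inequalities. The pivotal remark is that, by Definition~\ref{defSampOp}, for $f=\sum_{k\in\mathbb{Z}^d}C_k^T\Phi_k$ we have $UC=(f\ast\overrightarrow{\mu})(X)$, and hence the middle quantity $\|UC\|_{(\ell^p(J))^{(t)}}$ in \eqref{5.1} coincides exactly with the middle quantity $\|(f\ast\overrightarrow{\mu})(X)\|_{(\ell^p(J))^{(t)}}$ in \eqref{1.2}. Thus the whole statement reduces to converting between the signal norm $\|f\|_{L^p}$ and the coefficient norm $\|C\|_{(\ell^p(\mathbb{Z}^d))^{(r)}}$, which is precisely what the unconditional-basis estimate \eqref{2.1.1} supplies; moreover, since \eqref{2.1.1} holds for all $C$, the map $C\mapsto f$ is a bijection of $(\ell^p(\mathbb{Z}^d))^{(r)}$ onto $V^p(\Phi)$, so quantifying over all $C$ is the same as quantifying over all $f\in V^p(\Phi)$.

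For the forward implication I would assume $p$-stability, i.e.\ \eqref{1.2} with constants $A_p\le B_p$, and feed the lower (resp.\ upper) bound of \eqref{2.1.1} into the lower (resp.\ upper) bound of \eqref{1.2}. This produces \eqref{5.1} with $\eta_p=A_pm_p$ and $\beta_p=B_pM_p$, and the inequalities $0<m_p\le M_p<\infty$ together with $0<A_p\le B_p<\infty$ guarantee $0<\eta_p\le\beta_p<\infty$. For the converse I would assume \eqref{5.1} and run the same estimates with \eqref{2.1.1} used in the opposite direction, bounding $\|C\|$ above and below by multiples of $\|f\|_{L^p}$; this yields \eqref{1.2} with $A_p=\eta_p/M_p$ and $B_p=\beta_p/m_p$.

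There is no real obstacle: the argument is a routine two-sided composition of norm equivalences, exactly as the sentence preceding the statement anticipates. The only point that must be handled with some care is the role of the lower bound in \eqref{2.1.1}, which is what makes $C\mapsto f$ injective, so that identifying $\|UC\|$ with $\|(f\ast\overrightarrow{\mu})(X)\|$ and ranging over $C$ versus ranging over $f\in V^p(\Phi)$ are genuinely equivalent. Keeping the constants $m_p,M_p$ independent of $C$, which \eqref{2.1.1} guarantees, ensures that the resulting $\eta_p,\beta_p$ are uniform.
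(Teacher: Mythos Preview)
Your proposal is correct and follows exactly the approach the paper indicates: the sentence preceding the proposition states that the proof ``follows immediately from \eqref{2.1.1} and \eqref{1.2},'' and you have simply written out the chaining of those two norm equivalences with the explicit constants $\eta_p=A_pm_p$, $\beta_p=B_pM_p$ (and $A_p=\eta_p/M_p$, $B_p=\beta_p/m_p$ for the converse). There is nothing to add.
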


The next lemma is, essentially, a nutshell for many of the results in this paper.

%Lemma5.1
\begin{lem}\label{nutshell} Let $(X,\Phi,\overrightarrow{\mu})$ be a $p$-stable sampling model
and $U$ be its sampling operator satisfying (\ref{5.1}).
Let also $(\widetilde{X},\Theta,\overrightarrow{\alpha})$ be a sampling model
such that its sampling operator $U_{\Delta}$ satisfies
$\|U-U_{\Delta}\|<\eta_{p}$. Then $(\widetilde{X},\Theta,\overrightarrow{\alpha})$ is also $p$-stable.
%$X+\Delta$ is also a set of sampling for $V^{p}(\Phi)$ and $\overrightarrow{\mu}$.
\end{lem}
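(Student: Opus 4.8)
The plan is to reduce the entire statement to Proposition~\ref{prop5.1} and then apply the reverse triangle inequality. By that proposition, $p$-stability of $(\widetilde{X},\Theta,\overrightarrow{\alpha})$ is equivalent to the existence of constants $0<\widetilde{\eta}_p\leq\widetilde{\beta}_p<\infty$ with $\widetilde{\eta}_p\|C\|\leq\|U_\Delta C\|\leq\widetilde{\beta}_p\|C\|$ for every $C\in(\ell^p(\mathbb{Z}^d))^{(r)}$, where $U_\Delta$ is, by hypothesis, the sampling operator of that model. So it suffices to manufacture such bounds for $U_\Delta$ out of the given bounds $\eta_p,\beta_p$ for $U$ together with the assumption $\|U-U_\Delta\|<\eta_p$. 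Throughout I write $\varepsilon:=\|U-U_\Delta\|$, and I note the standing convention that $U$ and $U_\Delta$ act between the same pair of spaces $(\ell^p(\mathbb{Z}^d))^{(r)}$ and $(\ell^p(J))^{(t)}$, so that their difference is a genuine bounded operator of norm $\varepsilon<\eta_p$.

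First I would dispose of the upper bound, which needs nothing beyond boundedness. Fixing $C$ and using the triangle inequality, $\|U_\Delta C\|\leq\|UC\|+\|(U_\Delta-U)C\|\leq(\beta_p+\varepsilon)\|C\|$, so the admissible upper constant is simply $\widetilde{\beta}_p=\beta_p+\varepsilon<\infty$.

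The lower bound is where the hypothesis $\varepsilon<\eta_p$ is actually consumed. Again for arbitrary $C$, the reverse triangle inequality gives $\|U_\Delta C\|\geq\|UC\|-\|(U-U_\Delta)C\|\geq\eta_p\|C\|-\varepsilon\|C\|=(\eta_p-\varepsilon)\|C\|$. Since $\varepsilon<\eta_p$ by assumption, the constant $\widetilde{\eta}_p:=\eta_p-\varepsilon$ is strictly positive, which is exactly the lower boundedness of $U_\Delta$. Combining this with the upper estimate and invoking Proposition~\ref{prop5.1} for the model $(\widetilde{X},\Theta,\overrightarrow{\alpha})$ then delivers its $p$-stability, with explicit constants $\eta_p-\varepsilon$ and $\beta_p+\varepsilon$.

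Honestly, there is no hard step: the content is the familiar fact that bounded-below operators form an open set, so a perturbation of operator norm smaller than the lower bound preserves both bounds. The only thing I would be careful about is the bookkeeping implicit in the symbol $\|U-U_\Delta\|$, namely that the two models share the same number of generators $r$, the same number of measures $t$, and the same index set $J$, so that $U-U_\Delta$ and its norm are well defined; this is what makes the subtraction legitimate and is precisely what the later concrete perturbation results (jitter, measure, and generator perturbations) are responsible for verifying when they establish the quantitative estimate $\|U-U_\Delta\|<\eta_p$ in each case.
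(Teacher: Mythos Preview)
Your proof is correct and follows essentially the same approach as the paper: both use the triangle inequality together with the bounds in \eqref{5.1} to obtain an upper bound of order $\beta_p+\|U-U_\Delta\|$ and a lower bound $\eta_p-\|U-U_\Delta\|$, and then invoke Proposition~\ref{prop5.1}. The only cosmetic difference is that the paper states the upper constant as $\eta_p+\beta_p$ (using $\|U-U_\Delta\|<\eta_p$), whereas your $\beta_p+\varepsilon$ is slightly sharper.
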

\begin{proof}
Let $C\in(\ell^p(\mathbb{Z}^d))^{(r)}$. Then
\begin{eqnarray}
\|U_{\Delta}C\|_{(\ell^p(J))^{(t)}}&\leq&\|(U-U_{\Delta})C\|_{(\ell^p(J))^{(t)}}+\|UC\|_{(\ell^p(J))^{(t)}}\nonumber\\
&\leq&\|U-U_{\Delta}\|\|C\|_{(\ell^p(\mathbb{Z}^d))^{(r)}}+\beta_{p}\|C\|_{(\ell^p(\mathbb{Z}^d))^{(r)}}.\nonumber
\end{eqnarray}
Therefore, since $\|U-U_{\Delta}\|<\eta_{p}$, then we have
\begin{equation}\label{5.1.111}
\|U_{\Delta}C\|_{(\ell^p(J))^{(t)}}\leq\left(\eta_{p}+\beta_{p}\right)\|C\|_{(\ell^p(\mathbb{Z}^d))^{(r)}}.
\end{equation}
On the other hand, since
\begin{eqnarray}
\eta_{p}\|C\|_{(\ell^p(\mathbb{Z}^d))^{(r)}}&\leq&\|UC\|_{(\ell^p(J))^{(t)}}\leq\|(U-U_{\Delta})C\|_{(\ell^p(J))^{(t)}}+\|U_{\Delta}C\|_{(\ell^p(J))^{(t)}}\nonumber\\
&\leq&\|U-U_{\Delta}\|\|C\|_{(\ell^p(\mathbb{Z}^d))^{(r)}}+\|U_{\Delta}C\|_{(\ell^p(J))^{(t)}}.\nonumber
\end{eqnarray}
Hence,
\begin{equation}\label{5.1.211}
\left(\eta_{p}-\|U-U_{\Delta}\|\right)\|C\|_{(\ell^p(\mathbb{Z}^d))^{(r)}}\leq\|U_{\Delta}C\|_{(\ell^p(J))^{(t)}}.
\end{equation}
Since $\|U-U_{\Delta}\|<\eta_{p}$, the conclusion of the
lemma follows from (\ref{5.1.111}),
(\ref{5.1.211}), and Proposition \ref{prop5.1}.
\end{proof}

%________________________________________________________________________________________________________________________
%Section3:
\section{Main Results}
In this section we collect the main results of our paper.

\subsection{Admissible perturbations of a sampling model.}\

In practice, shift invariant spaces are used to model  classes of signals that can occur (or that are allowed) in applications. However often, the functions in a shift invariant space model only give approximations to the signals of interest.   For this reason, we begin with a result where the perturbation of a sampling model is
due to a small change of the genetators of the underlying shift invariant space.
%Theorem3.1
\begin{thm}\label{teo3.1}
Let $(X,\Phi,\overrightarrow{\mu})$ be a $p$-stable sampling model for some $p\in[1,\infty]$.
%Let $\overrightarrow{\mu}\in(\mathcal{M}(\mathbb{R}^d))^{(t)}$,
%$\Phi\in(\mathit{W}_{0}^{1})^{(r)}$ satisfy (\ref{2.1.1}), and
%$X=\{x_{j},j\in J\}\subset\mathbb{R}^d$ be a separated $\overrightarrow{\mu}$-sampling set for $V^p(\Phi)$.
Then there exists $\epsilon_0 > 0$ such that
%$X$ is a $\overrightarrow{\mu}$-sampling set for $V^p(\Theta)$,
the sampling model $(X,\Theta,\overrightarrow{\mu})$ is also $p$-stable,
whenever $\Theta\in(\mathit{W}_{0}^{1})^{(r)}$ and
$\|\Phi-\Theta\|_{(\mathit{W}^{1})^{(r)}}<\epsilon_0$.
\end{thm}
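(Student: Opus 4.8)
The plan is to exhibit $(X,\Theta,\overrightarrow{\mu})$ as an admissible perturbation of $(X,\Phi,\overrightarrow{\mu})$ in the sense of \lemref{nutshell}, reducing everything to an operator-norm estimate for the difference of the two sampling operators in terms of $\|\Phi-\Theta\|_{(\mathit{W}^{1})^{(r)}}$. Let $U=U_{(X,\Phi,\overrightarrow{\mu})}$ be the sampling operator of the given model, which by Proposition~\ref{prop5.1} satisfies \eqref{5.1} with some lower bound $\eta_p>0$, and let $U_\Delta=U_{(X,\Theta,\overrightarrow{\mu})}$ be the candidate perturbed operator. Because convolution is linear in the function slot, the $(i,l)$ block $(U-U_\Delta)^{i,l}$ is precisely the infinite matrix with entries $\big((\phi^i-\theta^i)\ast\mu^l\big)(x_j-k)$; in other words, $U-U_\Delta$ is itself the sampling operator of the formal model $(X,\Phi-\Theta,\overrightarrow{\mu})$, and this is what makes its norm amenable to an amalgam estimate.

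Before invoking \lemref{nutshell} I must check that $(X,\Theta,\overrightarrow{\mu})$ is a legitimate sampling model, i.e.\ that $\Theta$ still satisfies the unconditional-basis condition \eqref{2.1.1}. This is a soft perturbation of the synthesis map: the operators $C\mapsto\sum_k C_k^T\Phi_k$ and $C\mapsto\sum_k C_k^T\Theta_k$ from $(\ell^p(\mathbb{Z}^d))^{(r)}$ into $L^p$ differ by the synthesis operator of $\Phi-\Theta$, whose norm is bounded by a constant times $\|\Phi-\Theta\|_{(\mathit{W}^{1})^{(r)}}$ (the standard estimate $\|\sum_k c_k\psi(\cdot-k)\|_{L^p}\le\|\psi\|_{\mathit{W}^1}\|c\|_{\ell^p}$). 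Hence, once $\|\Phi-\Theta\|_{(\mathit{W}^{1})^{(r)}}$ is below a threshold $\epsilon_1>0$ proportional to $m_p$, the lower and upper bounds of \eqref{2.1.1} persist for $\Theta$ with slightly degraded constants, so $V^p(\Theta)$ is well defined and Proposition~\ref{prop5.1} applies to $(X,\Theta,\overrightarrow{\mu})$.

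The core of the argument is the estimate of $\|U-U_\Delta\|$, which I would obtain by combining two standard Wiener-amalgam facts applied to each block. First, convolution with a finite measure is bounded on $\mathit{W}^1$, giving $\|(\phi^i-\theta^i)\ast\mu^l\|_{\mathit{W}^1}\le c\,\|\phi^i-\theta^i\|_{\mathit{W}^1}\|\mu^l\|$. Second, for a separated set $X$ with separation constant $\delta$ and any $\psi\in\mathit{W}^1$, the operator with matrix $\big(\psi(x_j-k)\big)_{j,k}$ maps $\ell^p(\mathbb{Z}^d)$ to $\ell^p(J)$ with norm at most $C_\delta\|\psi\|_{\mathit{W}^1}$, which is the mechanism already behind the upper bound \eqref{4.1.2}. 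Summing the blockwise bounds and regrouping the product of sums yields
\begin{equation}\nonumber
\|U-U_\Delta\|_{p,op}=\sum_{l=1}^{t}\sum_{i=1}^{r}\|(U-U_\Delta)^{i,l}\|\le C_\delta\,c\,\|\overrightarrow{\mu}\|_{(\mathcal{M}(\mathbb{R}^d))^{(t)}}\,\|\Phi-\Theta\|_{(\mathit{W}^{1})^{(r)}}.
\end{equation}
Writing $K:=C_\delta\,c\,\|\overrightarrow{\mu}\|_{(\mathcal{M}(\mathbb{R}^d))^{(t)}}$, the constant $K$ does not depend on $\Theta$.

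Finally I would set $\epsilon_0=\min\{\eta_p/K,\,\epsilon_1\}$. If $\|\Phi-\Theta\|_{(\mathit{W}^{1})^{(r)}}<\epsilon_0$, then the second step makes $(X,\Theta,\overrightarrow{\mu})$ a bona fide sampling model, while the displayed estimate forces $\|U-U_\Delta\|<\eta_p$; \lemref{nutshell} then delivers the $p$-stability of $(X,\Theta,\overrightarrow{\mu})$. The only genuine obstacle is the amalgam estimate $\|U-U_\Delta\|\le K\|\Phi-\Theta\|_{(\mathit{W}^{1})^{(r)}}$: once convolution boundedness on $\mathit{W}^1$ and the separated-set matrix bound are in hand, the remainder is just the already-proved \lemref{nutshell} together with the routine persistence of the Riesz-basis constants.
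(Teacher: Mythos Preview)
Your proposal is correct. It differs from the paper's own proof mainly in organization: the paper does \emph{not} invoke \lemref{nutshell} for Theorem~\ref{teo3.1} but instead works directly at the level of the sampling inequality~\eqref{1.2}. Starting from $g=\sum_k C_k^T\Theta_k\in V^p(\Theta)$ and the companion $f=\sum_k C_k^T\Phi_k$, the paper bounds $\|(g\ast\overrightarrow{\mu})(X)\|$ above and below in terms of $\|g\|_{L^p}$ by inserting and removing $\|(f\ast\overrightarrow{\mu})(X)\|$ and controlling the difference via the same amalgam estimates you cite (Proposition~\ref{prop4.2} and Lemma~\ref{lem4.1}). This yields explicit formulas for $\epsilon_0$ (as the positive root of a quadratic) and for the new sampling bounds $A_p',B_p'$, which was one of the paper's stated goals.

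Your route---recognizing $U-U_\Delta$ as the sampling operator of $(X,\Phi-\Theta,\overrightarrow{\mu})$, bounding its norm by $2^d\mathcal{N}\|\overrightarrow{\mu}\|\,\|\Phi-\Theta\|_{(\mathit{W}^1)^{(r)}}$, and then applying \lemref{nutshell}---is cleaner and is in fact exactly the structure the paper later uses for the jitter perturbation (Theorem~\ref{teo5.1} via Lemma~\ref{lem5.1}) and explicitly acknowledges in the proof of Theorem~\ref{bigshell}. What you lose is only the explicit constants; what you gain is a uniform template that makes the subsequent combined perturbation theorems immediate. Your handling of the Riesz-basis persistence for $\Theta$ is the content of Lemma~\ref{lem4.3}, so you may simply cite it.
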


The above result means that if
$\overrightarrow{\mu}\in(\mathcal{M}(\mathbb{R}^d))^{(t)}$,
$\Phi\in(\mathit{W}_{0}^{1})^{(r)}$ satisfies (\ref{2.1.1}),
$X=\{x_{j},j\in J\}\subset\mathbb{R}^d$ is a separated $\overrightarrow{\mu}$-sampling set for $V^p(\Phi)$, and $\Theta$  satisfies the assumptions of the theorem, then
there exist $0< A^{'}_{p} \leq B^{'}_{p}<\infty$ such that
\begin{equation}\label{3.1.2}
A^{'}_{p}\|g\|_{L^p}\leq\|(g\ast\overrightarrow{\mu})(X)\|_{(\ell^{p}(J))^{(t)}}\leq
B^{'}_{p}\|g\|_{L^p},\mbox{  for all }  g\in V^{p}(\Theta).
\end{equation}
In the proof of this result in section 4 we will provide explicit
estimates for $\epsilon_0$ and the bounds $A^{'}_{p}$ and
$B^{'}_{p}$.

As a consequence of Theorem \ref{teo3.1} we have the following
results that were first proved in \cite{aaik}. The proofs now are
immediate: we apply Theorem \ref{teo3.1} with $d\overrightarrow{\mu}
= \Psi dx$ for Corollary \ref{coro3.4} and $\overrightarrow{\mu} =
\delta_0$ for Corollary \ref{coro3.5}.

%Corollaries to Theorem 3.1

%Corollary 3.4
\begin{coro}\label{coro3.4}
Let $\Psi\in(L^1(\mathbb{R}^d))^{(t)}$,
$\Phi\in(\mathit{W}_{0}^{1})^{(r)}$ satisfy (\ref{2.1.1}), and
$X=\{x_{j},j\in J\}\subset\mathbb{R}^d$ be a separated $\Psi$-sampling set for $V^p(\Phi)$.
Then there exists $\epsilon_0 > 0$ such that
$X$ is a $\Psi$-sampling set for $V^p(\Theta)$,
whenever $\Theta\in(\mathit{W}_{0}^{1})^{(r)}$ and
$\|\Phi-\Theta\|_{(\mathit{W}^{1})^{(r)}}<\epsilon_0$.
\end{coro}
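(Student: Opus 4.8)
The plan is to compare the sampling operator $U=U_{(X,\Phi,\overrightarrow{\mu})}$ with the operator $U_\Theta=U_{(X,\Theta,\overrightarrow{\mu})}$ and then invoke Lemma \ref{nutshell}. Since $U$ satisfies \eqref{5.1} with lower bound $\eta_p$, it suffices to produce $\epsilon_0>0$ for which $\|\Phi-\Theta\|_{(W^1)^{(r)}}<\epsilon_0$ forces $\|U-U_\Theta\|_{p,op}<\eta_p$. The stability of $(X,\Theta,\overrightarrow{\mu})$ then follows verbatim from the nutshell lemma, and the explicit bounds $A_p'$, $B_p'$ promised in the statement can be read off from \eqref{5.1.111} and \eqref{5.1.211} together with Proposition \ref{prop5.1}.

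The heart of the matter is thus a single estimate: controlling $\|U-U_\Theta\|_{p,op}$ linearly by $\|\Phi-\Theta\|_{(W^1)^{(r)}}$. First I would observe that $U-U_\Theta$ is again a $t\times r$ matrix of operators whose $(i,l)$ block has entries $\big((\phi^i-\theta^i)\ast\mu^l\big)(x_j-k)$, so that, writing $g^i=\phi^i-\theta^i$, one has $\|U-U_\Theta\|_{p,op}=\sum_{l,i}\|(U-U_\Theta)^{i,l}\|$ and it is enough to bound the $\ell^p$-operator norm of the single matrix $h(x_j-k)$ with $h=g^i\ast\mu^l$. For this I would use the Schur-type bound underlying the existence of the upper sampling constant (see \eqref{4.1.2}): for a separated set $X$ with separation constant $\delta$ and $h\in W^1$, the row and column sums obey $\sum_k|h(x_j-k)|\le\|h\|_{W^1}$ (distinct integer translates land in distinct unit cubes) and $\sum_j|h(x_j-k)|\le N_\delta\|h\|_{W^1}$, where $N_\delta$ bounds the number of points of $X$ in a unit cube. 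Schur's test, interpolated across $1\le p\le\infty$, then gives $\|(U-U_\Theta)^{i,l}\|\le C_\delta\|g^i\ast\mu^l\|_{W^1}$.

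The remaining ingredient is the amalgam module estimate $\|g^i\ast\mu^l\|_{W^1}\le C\,\|g^i\|_{W^1}\,\|\mu^l\|$, which I would get by passing the $W^1$ norm inside the integral defining the convolution and using that a real (non-integer) translate changes the $W^1$ norm by at most a fixed dimensional factor, since each translated unit cube meets at most $2^d$ lattice cubes. Combining the two estimates and summing over $i$ and $l$ yields
\[
\|U-U_\Theta\|_{p,op}\le C_\delta\,C\,\|\overrightarrow{\mu}\|_{(\mathcal{M})^{(t)}}\,\|\Phi-\Theta\|_{(W^1)^{(r)}},
\]
so that $\epsilon_0=\eta_p/\big(C_\delta\,C\,\|\overrightarrow{\mu}\|_{(\mathcal{M})^{(t)}}\big)$ does the job. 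The main obstacle is the careful proof of these two norm estimates: in particular, verifying that pointwise evaluation of $g^i\ast\mu^l$ at the nodes $x_j-k$ is legitimate and that the row and column sums are genuinely controlled by $\|h\|_{W^1}$. This is exactly where the hypotheses $\Phi,\Theta\in(W_0^1)^{(r)}$ (continuity, so that $\sup=\operatorname{esssup}$ and the convolutions are honestly continuous) and the separatedness of $X$ enter, while the fact that the perturbation is measured only in the $(W^1)^{(r)}$ norm is precisely what the estimate above consumes.
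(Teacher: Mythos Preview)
Your approach is correct and essentially equivalent to the paper's, though organized differently. The paper deduces Corollary~\ref{coro3.4} as an immediate specialization of Theorem~\ref{teo3.1} (taking $d\overrightarrow{\mu}=\Psi\,dx$), and proves Theorem~\ref{teo3.1} by a direct function-level estimate: it compares $\|(g\ast\overrightarrow{\mu})(X)\|$ with $\|(f\ast\overrightarrow{\mu})(X)\|$ for $f=\sum C_k^T\Phi_k$ and $g=\sum C_k^T\Theta_k$, using the very same amalgam ingredients you cite (your convolution bound is the paper's Lemma~\ref{lem4.1}/inequality~\eqref{4.2.2}, and your Schur-type row/column count is inequality~\eqref{4.2.5}). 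You instead package the same computation at the operator level and invoke Lemma~\ref{nutshell}; the paper in fact records precisely your operator-norm inequality, $\|U-U_\Theta\|\le 2^d\mathcal{N}\,\|\overrightarrow{\mu}\|\,\|\Phi-\Theta\|_{(W^1)^{(r)}}$, later in the proof of Theorem~\ref{bigshell}. Your route is slightly cleaner; the paper's direct argument buys explicit sampling constants $A_p'$, $B_p'$ written in terms of the original $A_p$, $m_p$ rather than $\eta_p$.

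One small gap: to apply Lemma~\ref{nutshell} (equivalently, to pass from~\eqref{5.1} for $U_\Theta$ back to the sampling inequality~\eqref{1.2} on $V^p(\Theta)$ via Proposition~\ref{prop5.1}), you need $(X,\Theta,\overrightarrow{\mu})$ to be a sampling model in the sense of Definition~\ref{defSampMod}, which requires that $\Theta$ itself satisfy the Riesz-basis condition~\eqref{2.1.1}. This is not among the hypotheses of the corollary and must be established; the paper handles it via Lemma~\ref{lem4.3}, which guarantees~\eqref{2.1.1} for $\Theta$ once $\epsilon_0<m_p$. You should insert this step and shrink $\epsilon_0$ accordingly.
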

%Corollary 3.5
\begin{coro}\label{coro3.5}
Let $\Phi\in(\mathit{W}_{0}^{1})^{(r)}$ satisfying (\ref{2.1.1})
and $X=\{x_{j},j\in J\}\subset\mathbb{R}^d$ be a separated ideal set of sampling for $V^{p}(\Phi)$.
Then there exists $\epsilon_{0}>0$ such that
$X$ is an ideal set of sampling for $V^{p}(\Theta)$,
whenever $\Theta\in(\mathit{W}_{0}^{1})^{(r)}$ and
$\|\Phi-\Theta\|_{(\mathit{W}^{1})^{(r)}}\leq\epsilon<\epsilon_{0}$.
\end{coro}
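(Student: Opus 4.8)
The plan is to obtain Corollary~\ref{coro3.5} as the special case of Theorem~\ref{teo3.1} in which the vector of sampling measures degenerates to a single Dirac mass at the origin, so that average sampling becomes ideal (pointwise) sampling. No new estimate is needed; the entire content is carried by Theorem~\ref{teo3.1}, and the work consists only in checking that the Dirac measure is admissible in the framework and in unwinding the definition of $p$-stability.

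First I would set $t=1$ and take $\overrightarrow{\mu}=\delta_{0}$, the Dirac measure on $\mathbb{R}^d$ concentrated at the origin. Since $\delta_{0}$ is a finite complex Borel measure with compact support, we have $\delta_{0}\in\mathcal{M}_{\infty}(\mathbb{R}^d)\subset\mathcal{M}(\mathbb{R}^d)$, so the triple $(X,\Phi,\delta_{0})$ is a bona fide sampling model in the sense of Definition~\ref{defSampMod}. As observed in the description of the model, $f\ast\delta_{0}=f$, whence $(f\ast\delta_{0})(X)=\{f(x_{j})\}_{j\in J}$; thus the inequalities \eqref{1.2} for $\overrightarrow{\mu}=\delta_{0}$ are precisely the statement that $X$ is an ideal set of sampling for $V^p(\Phi)$. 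By hypothesis $X$ is such a set, so $(X,\Phi,\delta_{0})$ is $p$-stable.

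I would then apply Theorem~\ref{teo3.1} verbatim to the $p$-stable model $(X,\Phi,\delta_{0})$. The theorem produces $\epsilon_{0}>0$ such that $(X,\Theta,\delta_{0})$ is again $p$-stable whenever $\Theta\in(\mathit{W}_{0}^{1})^{(r)}$ and $\|\Phi-\Theta\|_{(\mathit{W}^{1})^{(r)}}<\epsilon_{0}$. Unwinding the definition once more, $p$-stability of $(X,\Theta,\delta_{0})$ means exactly that $X$ is an ideal set of sampling for $V^p(\Theta)$, which is the desired conclusion; the corollary's hypothesis $\|\Phi-\Theta\|_{(\mathit{W}^{1})^{(r)}}\le\epsilon<\epsilon_{0}$ is merely the strict bound $<\epsilon_{0}$ rewritten.

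The hard part will essentially be nonexistent: the only genuine verification is that $\delta_{0}$ belongs to the admissible class of measures and that convolution against it acts as the identity, both of which are immediate. In other words, the corollary is a pure specialization of Theorem~\ref{teo3.1}, exactly as anticipated in the remark preceding it, and the explicit values of $\epsilon_{0}$ and of the resulting sampling bounds $A'_{p},B'_{p}$ are inherited from the proof of that theorem with $\overrightarrow{\mu}=\delta_{0}$ substituted throughout.
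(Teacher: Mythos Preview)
Your proposal is correct and is exactly the approach taken in the paper: the corollary follows immediately from Theorem~\ref{teo3.1} by specializing to $t=1$ and $\overrightarrow{\mu}=\delta_{0}$, so that average sampling reduces to ideal sampling.
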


In practice, signal samples are obtained using measuring devices with characteristics that are not fully known, and the measurements reflect local averages rather than  exact sample values. Thus,  a sampling measure $\overrightarrow{\mu}$ is a model that approximate the characterisitics of a measuring device.  For this reason, the next theorem describes the case when the perturbation is due to some uncertainty about the characteristics of the measuring devices, that is a perturbation of the vector of measures $\overrightarrow{\mu}$.

%Theorem 3.3.
\begin{thm}\label{teo3.3}
Let $(X,\Phi,\overrightarrow{\mu})$ be a $p$-stable sampling model for some $p\in[1,\infty]$.
Then there exists $\epsilon_0 > 0$ such that
the sampling model $(X,\Phi,\overrightarrow{\alpha})$ is also $p$-stable,
%Let $\overrightarrow{\mu}\in(\mathcal{M}(\mathbb{R}^d))^{(t)}$,
%$\Phi\in(\mathit{W}_{0}^{1})^{(r)}$ satisfy (\ref{2.1.1}), and
%$X=\{x_{j},j\in J\}\subset\mathbb{R}^d$ be a separated $\overrightarrow{\mu}$-sampling set for $V^p(\Phi)$.
%Then there exists $\epsilon_0 > 0$ such that
%$X$ is an $\overrightarrow{\alpha}$-sampling set for $V^p(\Phi)$,
whenever
$\overrightarrow{\alpha}\in(\mathcal{M}(\mathbb{R}^d))^{(t)}$ and
\[\|\overrightarrow{\mu}-\overrightarrow{\alpha}\|_{(\mathcal{M}(\mathbb{R}^d))^{(t)}}<\epsilon_{0}.\]
\end{thm}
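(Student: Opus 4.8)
The plan is to derive this from Lemma~\ref{nutshell} by showing that a small perturbation of the vector of measures produces a small perturbation of the sampling operator in operator norm. Write $U=U_{(X,\Phi,\overrightarrow{\mu})}$ for the sampling operator of the given $p$-stable model and $U_{\Delta}=U_{(X,\Phi,\overrightarrow{\alpha})}$ for that of the perturbed one; the latter is a legitimate sampling model since $X$ is separated, the same $\Phi\in(\mathit{W}_{0}^{1})^{(r)}$ still satisfies \eqref{2.1.1}, and $\overrightarrow{\alpha}\in(\mathcal{M}(\mathbb{R}^d))^{(t)}$. By Lemma~\ref{nutshell} it then suffices to produce $\epsilon_{0}>0$ such that $\|\overrightarrow{\mu}-\overrightarrow{\alpha}\|_{(\mathcal{M}(\mathbb{R}^d))^{(t)}}<\epsilon_{0}$ forces $\|U-U_{\Delta}\|<\eta_{p}$, where $\eta_{p}$ is the lower bound in \eqref{5.1} for the unperturbed operator $U$. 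So the entire argument reduces to bounding $\|U-U_{\Delta}\|$ linearly by the measure perturbation.

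I would exploit the block structure of the operators. The $(i,l)$ block of $U-U_{\Delta}$ is the infinite matrix with entries
\[
(U^{i,l}-U_{\Delta}^{i,l})_{j,k}=\bigl(\phi^{i}\ast(\mu^{l}-\alpha^{l})\bigr)(x_{j}-k),\qquad j\in J,\ k\in\mathbb{Z}^{d},
\]
so it is precisely the sampling matrix generated by the single function $g^{i,l}:=\phi^{i}\ast(\mu^{l}-\alpha^{l})$ on the set $X$. Two amalgam facts then drive the estimate. First, $\mathit{W}^{1}$ is a convolution module over $\mathcal{M}(\mathbb{R}^{d})$: there is a dimensional constant $c_{d}$ with
\[
\|g^{i,l}\|_{\mathit{W}^{1}}\le c_{d}\,\|\phi^{i}\|_{\mathit{W}^{1}}\,\|\mu^{l}-\alpha^{l}\|_{\mathcal{M}(\mathbb{R}^{d})},
\]
and since $\phi^{i}\in\mathit{W}_{0}^{1}$ the convolution $g^{i,l}$ is continuous, so its samples are meaningful. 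Second, for the separated set $X$ with separation constant $\delta$ and any $h\in\mathit{W}^{1}$, the sampling matrix $(h(x_{j}-k))_{j,k}$ defines a bounded operator $(\ell^{p}(\mathbb{Z}^{d}))\to(\ell^{p}(J))$ of norm at most $c_{\delta}\|h\|_{\mathit{W}^{1}}$; this is exactly the estimate behind the upper sampling constant (cf.\ \eqref{4.1.2}), proved by bounding the row and column sums of the matrix and applying the Schur test.

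Applying the second fact to $h=g^{i,l}$ and then the first bounds each block by $c_{\delta}c_{d}\|\phi^{i}\|_{\mathit{W}^{1}}\|\mu^{l}-\alpha^{l}\|_{\mathcal{M}(\mathbb{R}^{d})}$, and summing over the finitely many indices the double sum factors:
\[
\|U-U_{\Delta}\|=\sum_{l=1}^{t}\sum_{i=1}^{r}\|U^{i,l}-U_{\Delta}^{i,l}\|
\le c_{\delta}c_{d}\,\|\Phi\|_{(\mathit{W}^{1})^{(r)}}\,\|\overrightarrow{\mu}-\overrightarrow{\alpha}\|_{(\mathcal{M}(\mathbb{R}^{d}))^{(t)}}.
\]
Choosing $\epsilon_{0}=\eta_{p}/\bigl(c_{\delta}c_{d}\,\|\Phi\|_{(\mathit{W}^{1})^{(r)}}\bigr)$ then guarantees $\|U-U_{\Delta}\|<\eta_{p}$ whenever $\|\overrightarrow{\mu}-\overrightarrow{\alpha}\|<\epsilon_{0}$, and Lemma~\ref{nutshell} completes the proof; moreover, its inequalities \eqref{5.1.111} and \eqref{5.1.211} furnish explicit stability constants for the perturbed model. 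The only real content is the pair of amalgam inequalities, and I expect the main obstacle to be of a technical rather than conceptual nature: verifying that $c_{\delta}$ can be taken independent of $p$, so that the argument covers the endpoint cases $p=1,\infty$, and that $c_{d}$ is independent of the measures themselves. Both follow from the local-supremum definition \eqref{2.1.2} of the amalgam norm together with the separation of $X$, but they should be carried out explicitly rather than merely quoted.
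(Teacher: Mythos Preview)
Your proposal is correct and follows essentially the same route as the paper: the core step in both is the bound $\|(f\ast(\overrightarrow{\mu}-\overrightarrow{\alpha}))(X)\|_{(\ell^{p})^{(t)}}\le 2^{d}\mathcal{N}\,\|C\|\,\|\Phi\|_{(\mathit{W}^{1})^{(r)}}\,\|\overrightarrow{\mu}-\overrightarrow{\alpha}\|$, obtained from the amalgam convolution estimate (your $c_{d}=2^{d}$, Lemma~\ref{lem4.1}) and the separated-set sampling bound (your $c_{\delta}=\mathcal{N}$, equations~\eqref{4.2.4}--\eqref{4.2.5}). The only difference is packaging---the paper carries out the triangle inequality directly at the level of $f$ and the $L^{p}$ sampling bounds \eqref{1.2}, whereas you invoke Lemma~\ref{nutshell}; the resulting $\epsilon_{0}$ agree once you identify $\eta_{p}=A_{p}m_{p}$, and your worry about $c_{\delta}$ being $p$-independent is moot since $p$ is fixed throughout (indeed $\mathcal{N}$ does depend on $p$, but is finite for each $p\in[1,\infty]$).
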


Again, if $X$, $\overrightarrow{\mu}$, $\overrightarrow{\alpha}$, and $\Phi$ satisfy the assumptions of the theorem then
there exist $0< A^{'}_{p} \leq B^{'}_{p}<\infty$ such that
\begin{equation}\label{3.3.1.1}
A^{'}_{p}\|f\|_{L^p}\leq\|(f\ast\overrightarrow{\alpha})(X)\|_{(\ell^{p}(J))^{(t)}}\leq
B^{'}_{p}\|f\|_{L^p},\mbox{  for all }  f\in V^{p}(\Phi),
\end{equation}
and the explicit estimates for $\epsilon_0$, $A^{'}_{p}$ and
$B^{'}_{p}$ will be given in section 4.

Considering $\overrightarrow{\mu}$ and $\overrightarrow{\alpha}$ in Theorem \ref{teo3.3}
such that  $d\overrightarrow{\mu} = \Psi dx$ and $d\overrightarrow{\alpha} = \Gamma dx$
 we obtain the following direct corollary (see also \cite[Theorem 3.3]{aaik}).

%Corollary 3.6
\begin{coro}\label{coro3.6}
Let $\Psi\in(L^1(\mathbb{R}^d))^{(t)}$,
$\Phi\in(\mathit{W}_{0}^{1})^{(r)}$ satisfy (\ref{2.1.1}), and
$X$ be a separated $\Psi$-sampling set for $V^p(\Phi)$.
Then there exists $\epsilon_0 > 0$ such that
$X$ is a $\Gamma$-sampling set for $V^p(\Phi)$,
whenever $\Gamma\in(L^1(\mathbb{R}^d))^{(t)}$ and
$\|\Psi-\Gamma\|_{(L^1(\mathbb{R}^d))^{(t)}}<\epsilon_{0}$.
\end{coro}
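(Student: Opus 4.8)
The plan is to recognize this corollary as a direct specialization of Theorem \ref{teo3.3}, where the abstract measure perturbation is realized by a perturbation of absolutely continuous densities. The key observation is that when $d\overrightarrow{\mu} = \Psi\,dx$ and $d\overrightarrow{\alpha} = \Gamma\,dx$, the total variation norm on $(\mathcal{M}(\mathbb{R}^d))^{(t)}$ collapses to the $(L^1(\mathbb{R}^d))^{(t)}$ norm of the densities. Concretely, for a single component $\mu^j - \alpha^j$ with density $\psi^j - \gamma^j$, the total variation is $\|\mu^j - \alpha^j\| = \int_{\mathbb{R}^d} |\psi^j(y) - \gamma^j(y)|\,dy = \|\psi^j - \gamma^j\|_{L^1}$. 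Summing over $j = 1,\ldots,t$ gives $\|\overrightarrow{\mu} - \overrightarrow{\alpha}\|_{(\mathcal{M}(\mathbb{R}^d))^{(t)}} = \|\Psi - \Gamma\|_{(L^1(\mathbb{R}^d))^{(t)}}$.

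First I would invoke the hypothesis that $X$ is a separated $\Psi$-sampling set for $V^p(\Phi)$, which by definition means $(X,\Phi,\overrightarrow{\mu})$ is a $p$-stable sampling model with $d\overrightarrow{\mu} = \Psi\,dx$. This places us exactly in the setting of Theorem \ref{teo3.3}. I would then apply that theorem to obtain the threshold $\epsilon_0 > 0$ such that any $\overrightarrow{\alpha} \in (\mathcal{M}(\mathbb{R}^d))^{(t)}$ with $\|\overrightarrow{\mu} - \overrightarrow{\alpha}\|_{(\mathcal{M}(\mathbb{R}^d))^{(t)}} < \epsilon_0$ yields a $p$-stable model $(X,\Phi,\overrightarrow{\alpha})$.

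Next I would take an arbitrary $\Gamma \in (L^1(\mathbb{R}^d))^{(t)}$ with $\|\Psi - \Gamma\|_{(L^1(\mathbb{R}^d))^{(t)}} < \epsilon_0$, set $d\overrightarrow{\alpha} = \Gamma\,dx$, and verify that $\overrightarrow{\alpha} \in (\mathcal{M}(\mathbb{R}^d))^{(t)}$ since any $L^1$ function defines a finite Borel measure via integration. By the norm identity above, the assumption $\|\Psi - \Gamma\|_{(L^1(\mathbb{R}^d))^{(t)}} < \epsilon_0$ translates directly into $\|\overrightarrow{\mu} - \overrightarrow{\alpha}\|_{(\mathcal{M}(\mathbb{R}^d))^{(t)}} < \epsilon_0$. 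Theorem \ref{teo3.3} then guarantees $(X,\Phi,\overrightarrow{\alpha})$ is $p$-stable, which by Definition \ref{def1.2} means precisely that $X$ is a $\Gamma$-sampling set for $V^p(\Phi)$.

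This corollary carries essentially no independent difficulty: once Theorem \ref{teo3.3} is in hand, the only substantive point is the isometric identification of the $L^1$ density norm with the total variation norm of the associated absolutely continuous measure. The one place to exercise mild care is confirming that this identity holds componentwise and then survives the summation defining the vector norm on $(\mathcal{M}(\mathbb{R}^d))^{(t)}$, but this is immediate from the definitions since both norms are given as sums over the $t$ components. I expect no genuine obstacle here; the proof is a two-line reduction modulo this standard identification.
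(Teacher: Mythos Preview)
Your proposal is correct and matches the paper's own approach exactly: the paper states just before Corollary~\ref{coro3.6} that it is obtained by taking $d\overrightarrow{\mu} = \Psi\,dx$ and $d\overrightarrow{\alpha} = \Gamma\,dx$ in Theorem~\ref{teo3.3}. Your added remark about the total variation norm coinciding with the $L^1$ norm of the density is the only detail needed, and it is immediate.
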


As a consequence of Theorems \ref{teo3.1} and \ref{teo3.3} we
obtain the following combined perturbation result and its
corollary, which is essentially Theorem 3.4 in \cite{aaik}.

%Theorem 3.4.
\begin{thm}\label{teo3.4}
Let $(X,\Phi,\overrightarrow{\mu})$ be a $p$-stable sampling model for some $p\in[1,\infty]$.
Then there exists $\epsilon_0 > 0$ such that
the sampling model $(X,\Theta,\overrightarrow{\alpha})$ is also $p$-stable,
whenever
$\overrightarrow{\alpha}\in(\mathcal{M}(\mathbb{R}^d))^{(t)}$,
$\Theta\in(\mathit{W}_{0}^{1})^{(r)}$, and
$\|\Phi-\Theta\|_{(\mathit{W}^{1})^{(r)}}+\|\overrightarrow{\mu}-\overrightarrow{\alpha}\|_{(\mathcal{M}(\mathbb{R}^d))^{(t)}}<\epsilon_{0}$.
\end{thm}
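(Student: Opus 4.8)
The plan is to deduce the result directly from the nutshell Lemma~\ref{nutshell}, rather than by iterating Theorems~\ref{teo3.1} and~\ref{teo3.3}. Let $U = U_{(X,\Phi,\overrightarrow{\mu})}$ be the sampling operator of the given $p$-stable model, with lower bound $\eta_p$ as in \eqref{5.1}, and let $U_{\Delta} = U_{(X,\Theta,\overrightarrow{\alpha})}$ be the sampling operator of the perturbed model. By Lemma~\ref{nutshell} it suffices to produce $\epsilon_0 > 0$ such that $\|U - U_{\Delta}\| < \eta_p$ whenever $\|\Phi - \Theta\|_{(\mathit{W}^{1})^{(r)}} + \|\overrightarrow{\mu} - \overrightarrow{\alpha}\|_{(\mathcal{M}(\mathbb{R}^d))^{(t)}} < \epsilon_0$; the $p$-stability of $(X,\Theta,\overrightarrow{\alpha})$ then follows from Proposition~\ref{prop5.1}.

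To estimate $\|U - U_{\Delta}\|$ I would separate the two perturbations by inserting the intermediate sampling operator $V = U_{(X,\Theta,\overrightarrow{\mu})}$ (perturbed generator, original measure) and applying the triangle inequality,
\[
\|U - U_{\Delta}\| \le \|U - V\| + \|V - U_{\Delta}\|.
\]
The first term only reflects the change $\Phi \to \Theta$ with $\overrightarrow{\mu}$ fixed: its $(i,l)$ block has entries $\bigl((\phi^{i} - \theta^{i})\ast \mu^{l}\bigr)(x_j - k)$, so it is exactly the operator bounded in the proof of Theorem~\ref{teo3.1}. The second term only reflects the change $\overrightarrow{\mu}\to\overrightarrow{\alpha}$ with the generator $\Theta$ fixed: its $(i,l)$ block has entries $\bigl(\theta^{i} \ast (\mu^{l} - \alpha^{l})\bigr)(x_j - k)$, which is the operator bounded in the proof of Theorem~\ref{teo3.3}.

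Both blocks are controlled by the same two amalgam-space facts driving those proofs: for a separated set $X$ with separation constant $\delta$, the matrix $[\,g(x_j - k)\,]_{j,k}$ defines an operator on $\ell^p$ of norm at most $C\,\|g\|_{\mathit{W}^{1}}$ with $C = C(d,\delta)$ (a Schur-test estimate using the $\mathit{W}^{1}$-norm, which is the content of \eqref{4.1.2}), together with the convolution estimate $\|\phi \ast \mu\|_{\mathit{W}^{1}} \le C\,\|\phi\|_{\mathit{W}^{1}}\,\|\mu\|$. Applying these to the two blocks and summing over $1\le i\le r$, $1\le l \le t$ yields
\[
\|U - V\| \le C\,\|\overrightarrow{\mu}\|\,\|\Phi - \Theta\|_{(\mathit{W}^{1})^{(r)}}, \qquad \|V - U_{\Delta}\| \le C\,\|\Theta\|_{(\mathit{W}^{1})^{(r)}}\,\|\overrightarrow{\mu} - \overrightarrow{\alpha}\|_{(\mathcal{M}(\mathbb{R}^d))^{(t)}}.
\]

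The one point needing care --- and the only place where the two perturbations genuinely interact --- is that the constant in the second bound involves $\|\Theta\|_{(\mathit{W}^{1})^{(r)}}$, which depends on the perturbation itself. This is harmless: $\|\Theta\|_{(\mathit{W}^{1})^{(r)}} \le \|\Phi\|_{(\mathit{W}^{1})^{(r)}} + \epsilon_0$, so once I impose $\epsilon_0 \le 1$ the constant is bounded uniformly by $M := C\max\{\|\overrightarrow{\mu}\|,\ \|\Phi\|_{(\mathit{W}^{1})^{(r)}} + 1\}$. Hence $\|U - U_{\Delta}\| \le M\bigl(\|\Phi-\Theta\|_{(\mathit{W}^{1})^{(r)}} + \|\overrightarrow{\mu}-\overrightarrow{\alpha}\|_{(\mathcal{M}(\mathbb{R}^d))^{(t)}}\bigr) < M\epsilon_0$, and choosing $\epsilon_0 = \min\{1,\, \eta_p/(2M)\}$ makes this strictly less than $\eta_p$, after which Lemma~\ref{nutshell} closes the argument; the same constants feed \eqref{5.1.111}--\eqref{5.1.211} to produce the explicit bounds $A'_p, B'_p$. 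The main obstacle is therefore \emph{not} the combination step, which is a clean triangle inequality, but securing the uniform operator-norm estimates on the two difference blocks and tracking that the measure-perturbation constant stays bounded; since those estimates are already available from the proofs of Theorems~\ref{teo3.1} and~\ref{teo3.3}, the present proof reduces to bookkeeping once the intermediate model $V$ is introduced. (A naive alternative would be to apply Theorem~\ref{teo3.1} and then Theorem~\ref{teo3.3} in succession, but this requires a uniform lower bound on the stability constant of the intermediate model $(X,\Theta,\overrightarrow{\mu})$, so the direct estimate above is preferable.)
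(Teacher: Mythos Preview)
Your proof is correct and takes a genuinely different route from the paper's. The paper proceeds exactly by what you call the ``naive alternative'': it applies Theorem~\ref{teo3.1} to pass from $(X,\Phi,\overrightarrow{\mu})$ to the intermediate model $(X,\Theta,\overrightarrow{\mu})$, reads off explicit sampling bounds $A_p'',B_p''$ for that model, and then applies Theorem~\ref{teo3.3} to pass from $(X,\Theta,\overrightarrow{\mu})$ to $(X,\Theta,\overrightarrow{\alpha})$; the final $\epsilon_0$ is the minimum of the two thresholds. Your approach instead bounds $\|U-U_\Delta\|$ directly via the intermediate sampling operator and invokes Lemma~\ref{nutshell} once. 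This is cleaner and avoids tracking the intermediate sampling constants; it is in fact the argument the paper itself uses later for Theorem~\ref{bigshell}. What the paper's iteration buys is fully explicit formulas for $A_p'$ and $B_p'$ in terms of the original data, which your route gives only through \eqref{5.1.111}--\eqref{5.1.211} and the operator-norm bound.

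Two small points. First, your parenthetical attributing the Schur-type operator bound to \eqref{4.1.2} is a mis-citation: \eqref{4.1.2} is the convolution estimate $\|\phi\ast\mu\|_{\mathit{W}^1}\le 2^d\|\phi\|_{\mathit{W}^1}\|\mu\|$, while the matrix-norm bound you need comes from \eqref{4.2.4}--\eqref{4.2.5}, giving $C=2^d\mathcal{N}$. Second, to invoke Lemma~\ref{nutshell} you need $(X,\Theta,\overrightarrow{\alpha})$ to be a sampling model in the sense of Definition~\ref{defSampMod}, which requires $\Theta$ to satisfy \eqref{2.1.1}; this holds by Lemma~\ref{lem4.3} once $\epsilon_0<m_p$, so you should add that constraint to your choice of $\epsilon_0$.
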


%From theorem \ref{teo3.4}, setting $\overrightarrow{\mu} = \Psi dx$ and $\overrightarrow{\alpha} = \Gamma dx$,
% we obtain:

%Corollary 3.7
\begin{coro}\label{coro3.7}
Let $\Psi\in(L^1(\mathbb{R}^d))^{(t)}$,
$\Phi\in(\mathit{W}_{0}^{1})^{(r)}$ satisfy (\ref{2.1.1}), and
$X$ be a separated $\Psi$-sampling set for $V^p(\Phi)$.
Then there exists $\epsilon_0 > 0$ such that
$X$ is a $\Gamma$-sampling set for $V^p(\Theta)$,
whenever $\Gamma\in(L^1(\mathbb{R}^d))^{(t)}$,
$\Theta\in(\mathit{W}_{0}^{1})^{(r)}$ and
$\|\Phi-\Theta\|_{(\mathit{W}^{1})^{(r)}}+\|\Psi-\Gamma\|_{(L^1(\mathbb{R}^d))^{(t)}}<\epsilon_{0}$.
\end{coro}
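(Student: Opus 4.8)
The plan is to obtain this as an immediate specialization of the combined perturbation result, \thmref{teo3.4}, to absolutely continuous vectors of measures. First I would recall the convention fixed after Definition~\ref{def1.2}: writing $d\overrightarrow{\mu}=\Psi\,dx$, the statement that $X$ is a $\Psi$-sampling set for $V^p(\Phi)$ is by definition the statement that $(X,\Phi,\overrightarrow{\mu})$ is a $p$-stable sampling model. So the hypotheses of the corollary are precisely those of \thmref{teo3.4} for the model $(X,\Phi,\overrightarrow{\mu})$, and likewise $d\overrightarrow{\alpha}=\Gamma\,dx$ recasts the desired conclusion ``$X$ is a $\Gamma$-sampling set for $V^p(\Theta)$'' as ``$(X,\Theta,\overrightarrow{\alpha})$ is $p$-stable.''

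The only computation I would actually perform is the matching of the two perturbation norms. For a single absolutely continuous measure $d\mu=\psi\,dx$ one has $d|\mu|=|\psi|\,dx$, hence
\[
\|\mu\|=\int_{\mathbb{R}^d}d|\mu|=\int_{\mathbb{R}^d}|\psi|\,dx=\|\psi\|_{L^1}.
\]
Summing this identity over the $t$ components and using the definitions of the norms on $(\mathcal{M}(\mathbb{R}^d))^{(t)}$ and $(L^1(\mathbb{R}^d))^{(t)}$ gives
\[
\|\overrightarrow{\mu}-\overrightarrow{\alpha}\|_{(\mathcal{M}(\mathbb{R}^d))^{(t)}}=\|\Psi-\Gamma\|_{(L^1(\mathbb{R}^d))^{(t)}}.
\]

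With this equality in hand I would simply let $\epsilon_0>0$ be the constant that \thmref{teo3.4} provides for $(X,\Phi,\overrightarrow{\mu})$, and observe that the hypothesis $\|\Phi-\Theta\|_{(\mathit{W}^1)^{(r)}}+\|\Psi-\Gamma\|_{(L^1(\mathbb{R}^d))^{(t)}}<\epsilon_0$ of the corollary is, after the substitution above, exactly the hypothesis $\|\Phi-\Theta\|_{(\mathit{W}^1)^{(r)}}+\|\overrightarrow{\mu}-\overrightarrow{\alpha}\|_{(\mathcal{M}(\mathbb{R}^d))^{(t)}}<\epsilon_0$ of \thmref{teo3.4}. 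The theorem then yields the $p$-stability of $(X,\Theta,\overrightarrow{\alpha})$, which is the assertion of the corollary.

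I do not expect any genuine obstacle: the corollary is a pure specialization, and the single nontrivial point --- the norm identification --- is the elementary fact that the total variation of an absolutely continuous measure equals the $L^1$ norm of its Radon--Nikodym density. The substantive work was already carried in \thmref{teo3.1} and \thmref{teo3.3} (and ultimately in \lemref{nutshell}), from which \thmref{teo3.4} is assembled.
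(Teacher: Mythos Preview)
Your proposal is correct and matches the paper's approach: the corollary is stated immediately after \thmref{teo3.4} as its specialization to absolutely continuous measures $d\overrightarrow{\mu}=\Psi\,dx$, $d\overrightarrow{\alpha}=\Gamma\,dx$, with no separate proof given. The norm identification you spell out is exactly the (implicit) step the paper relies on.
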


An error in the location of the sampling points $\{x_j\}$ is what is
often called jitter error  (see e.g., \cite {aacl,ABK02} and the
references therein). This error can be modeled as a perturbation of
the sampling set $X$. For this reason, our next perturbation results
deal with an altered sampling set $\widetilde{X} = X+\Delta = \{x_j
+\delta_j\}_{j\in J}$, where $\Delta=\{\delta_{j}\}_{j\in
J}\subset\mathbb{R}^d$. We use the standard notation for
$\norm{\Delta}_\infty = \sup\{\norm{\delta_{j}}: j\in J\}$.
 
 \begin{thm}\label{teo5.1}
Let $(X,\Phi,\overrightarrow{\mu})$ be a $p$-stable sampling model for some $p\in[1,\infty]$.
Then there exists $\epsilon_0 > 0$ such that
the sampling model $(X+\Delta,\Phi,\overrightarrow{\mu})$ is also $p$-stable,
whenever
$\norm{\Delta}_\infty <\epsilon_0$.
\end{thm}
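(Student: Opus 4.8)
The plan is to reduce everything to Lemma~\ref{nutshell}. Writing $U$ for the sampling operator of $(X,\Phi,\overrightarrow{\mu})$ and $U_\Delta$ for that of $(X+\Delta,\Phi,\overrightarrow{\mu})$, it suffices to show that $\|U-U_\Delta\|_{p,op}<\eta_p$ whenever $\norm{\Delta}_\infty$ is small enough; then $p$-stability of the jittered model is immediate. Both operators are matrices of blocks: the entries of $U^{i,l}$ and $U_\Delta^{i,l}$ are $(\phi^i\ast\mu^l)(x_j-k)$ and $(\phi^i\ast\mu^l)(x_j+\delta_j-k)$ respectively. As a first step I would record that $g^{i,l}:=\phi^i\ast\mu^l$ lies in $\Woo$: since $\phi^i\in\Woo\subseteq L^\infty$ and $\mu^l$ is a finite measure, $g^{i,l}$ is continuous by dominated convergence, and $\|g^{i,l}\|_{\Wo}\le\|\phi^i\|_{\Wo}\|\mu^l\|$ because convolution with a finite measure is bounded on $\Wo$. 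Note the shift $\delta_j$ depends on $j$, so the jitter cannot be folded into a single fixed measure perturbation, and Theorem~\ref{teo3.3} does not apply: a direct estimate is required.

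Next I would estimate the block difference entrywise. Put $s=\norm{\Delta}_\infty$ and use the oscillation $\osc_s(g)(y)=\sup_{|\tau|\le s}|g(y)-g(y+\tau)|$. Each entry of $U^{i,l}-U_\Delta^{i,l}$ satisfies
\[
|(\phi^i\ast\mu^l)(x_j-k)-(\phi^i\ast\mu^l)(x_j+\delta_j-k)|\le \osc_s(g^{i,l})(x_j-k),
\]
so the block is dominated entry by entry by the positive matrix $[\osc_s(g^{i,l})(x_j-k)]$. To convert this into an operator-norm bound I would apply the Schur test: the $\ell^p\to\ell^p$ norm of a matrix $[a_{j,k}]$ is at most $R^{1-1/p}C^{1/p}$, where $R=\sup_j\sum_{k\in\zd}\osc_s(g^{i,l})(x_j-k)$ is the largest row sum and $C=\sup_k\sum_j\osc_s(g^{i,l})(x_j-k)$ the largest column sum. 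For fixed $j$ the points $x_j-k$ run through a lattice translate, so each row sum is bounded by $\|\osc_s(g^{i,l})\|_{\Wo}$; since $X$ is $\delta$-separated, at most $N_\delta\sim(1+1/\delta)^d$ of the $x_j$ fall in any unit cube, so each column sum is at most $N_\delta\|\osc_s(g^{i,l})\|_{\Wo}$. Hence $\|U^{i,l}-U_\Delta^{i,l}\|\le N_\delta\,\|\osc_s(g^{i,l})\|_{\Wo}$ for the given $p$, and summing over $1\le i\le r$, $1\le l\le t$,
\[
\|U-U_\Delta\|_{p,op}\le N_\delta\sum_{l=1}^t\sum_{i=1}^r\|\osc_s(g^{i,l})\|_{\Wo}.
\]

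Finally I would let $s\to0$. The crucial analytic input is that $\|\osc_s(g)\|_{\Wo}\to0$ as $s\to0$ for every $g\in\Woo$; this follows from uniform continuity of $g$ on compact sets together with the amalgam decay $\sum_n\ssup_{x\in[0,1]^d}|g(x+n)|<\infty$, via dominated convergence. Applying it to each $g^{i,l}$, I can choose $\epsilon_0>0$ so small that $N_\delta\sum_{l,i}\|\osc_{\epsilon_0}(g^{i,l})\|_{\Wo}<\eta_p$; then any $\Delta$ with $\norm{\Delta}_\infty<\epsilon_0$ gives $\|U-U_\Delta\|_{p,op}<\eta_p$, and Lemma~\ref{nutshell} yields the $p$-stability of $(X+\Delta,\Phi,\overrightarrow{\mu})$. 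The main obstacle is exactly the oscillation estimate $\|\osc_s(g^{i,l})\|_{\Wo}\to0$: this is where the continuity hypothesis $\Phi\in(\Woo)^{(r)}$ is genuinely used, and it is what forces the jitter threshold $\epsilon_0$ to depend on $\Phi$, $\overrightarrow{\mu}$, and the separation constant $\delta$ rather than being universal.
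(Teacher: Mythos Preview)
Your proposal is correct and follows essentially the same route as the paper: reduce to Lemma~\ref{nutshell} by showing $\|U-U_\Delta\|\to 0$ as $\|\Delta\|_\infty\to 0$, bound the block entries by $\osc_s(\phi^i\ast\mu^l)$, use separation of $X$ to count points per unit cube, and invoke $\|\osc_s g\|_{\Wo}\to 0$ for $g\in\Woo$. The only cosmetic difference is that the paper bounds $\|(U^{i,l}-U_\Delta^{i,l})c^i\|_{\ell^p}$ by recognizing a discrete convolution $|c^i|\ast b^{i,l}$ and applying Young's inequality, whereas you use the Schur test; both yield $\|U^{i,l}-U_\Delta^{i,l}\|\le \mathcal{N}\,\|\osc_s(\phi^i\ast\mu^l)\|_{\Wo}$ with the same separation-dependent constant (your Schur computation actually gives $N_\delta^{1/p}$, matching the paper's $\mathcal{N}=(\sqrt{d}/\delta+1)^{d/p}$, though you wrote $N_\delta$).
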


\begin{obs}\label{obs4} The above theorem is an analog of Theorem 3.6 in \cite{aacl}, where $r=t=1$,
$p=2$, and $\mu=\mu^1=\delta_{0}$.
\end{obs}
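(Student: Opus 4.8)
The plan is to treat Remark \ref{obs4} as an assertion that Theorem \ref{teo5.1} specializes to \cite[Theorem 3.6]{aacl}, and to verify this by a careful matching of definitions rather than by proving a new estimate. Since the remark claims a reduction of the general jitter result to the classical scalar one, my task is to freeze the parameters at $r=t=1$, $p=2$, and $\overrightarrow{\mu}=\mu^1=\delta_0$, and then check that every hypothesis and every conclusion of Theorem \ref{teo5.1} turns into its counterpart in \cite{aacl}. First I would record that, with the vector structure collapsed, the bookkeeping of Section~2 simplifies to the familiar single-generator Hilbert-space setting.

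Concretely, with $r=1$ the generator vector $\Phi=(\phi^1)$ is a single function $\phi$, so that $V^p(\Phi)=\{\sum_{k\in\zd}c_k\,\phi(\cdot-k): c\in\ell^p(\zd)\}$, and at $p=2$ this is exactly $\vffi$. With $t=1$ the measure vector is a single $\mu$ and the sample space $(\ell^p(J))^{(t)}$ reduces to $\ell^2(J)$ with its usual norm. The unconditional-basis requirement \eqref{2.1.1} for $p=2$ becomes the Riesz-basis inequality $m_2\norm{c}_{\ell^2}\le\norm{\sum_k c_k\phi(\cdot-k)}_{L^2}\le M_2\norm{c}_{\ell^2}$, which is the standing structural assumption of \cite{aacl}. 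Next I would eliminate the averaging: because $\delta_0$ is the convolution identity, $(f\ast\mu)(x)=(f\ast\delta_0)(x)=f(x)$, so the data $(f\ast\overrightarrow{\mu})(X)$ become the ideal point samples $\{f(x_j)\}_{j\in J}$, and the sampling inequalities \eqref{1.2} read $A_2\norm{f}_{L^2}\le\norm{\{f(x_j)\}}_{\ell^2(J)}\le B_2\norm{f}_{L^2}$ for every $f\in\vffi$. This is precisely the statement that $X$ is an ideal set of sampling in the sense of \cite{aacl}, and under the same identifications the conclusion of Theorem \ref{teo5.1}---that $X+\Delta=\{x_j+\delta_j\}$ remains a set of sampling as soon as $\norm{\Delta}_\infty<\epsilon_0$---coincides with Theorem~3.6 of \cite{aacl}.

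The single point that demands genuine care, and which I would flag as the main obstacle, is the compatibility of the regularity hypotheses on the generator. Our model imposes $\phi\in W_0^1$, whereas \cite{aacl} formulates its jitter theorem under a possibly differently phrased smoothness or decay condition. I would therefore confirm that the $W_0^1$ assumption delivers exactly the two ingredients actually used there---continuity of the elements of $\vffi$, so that point evaluation is meaningful, together with the amalgam-norm control that bounds the change in the sampling operator under a shift of the nodes---so that no hypothesis of \cite[Theorem 3.6]{aacl} is silently dropped or strengthened in passing to the special case. Once this alignment of assumptions is checked, the remark follows with no further computation, the explicit $\epsilon_0$ of our proof of Theorem \ref{teo5.1} reducing to the jitter threshold of \cite{aacl}.
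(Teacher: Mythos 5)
Your proposal is correct: the paper offers no proof of this remark (it is stated as a self-evident observation), and your definitional unwinding---collapsing $r=t=1$ so that $V^2(\Phi)$ becomes the single-generator space, and using that convolution with $\delta_0$ is the identity so that \eqref{1.2} becomes the ideal sampling inequality---is exactly the verification the paper leaves implicit. Your flag about matching the regularity hypotheses of \cite{aacl} is apt and consistent with the paper's cautious wording that Theorem \ref{teo5.1} is an ``analog'' of, rather than identical to, Theorem 3.6 there.
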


As a direct corollary of Theorems \ref{teo3.4} and \ref{teo5.1} we get the following combined result.

\begin{thm}\label{teo5.111}
Let $(X,\Phi,\overrightarrow{\mu})$ be a $p$-stable sampling model for some $p\in[1,\infty]$.
Then there exists $\epsilon_0 > 0$ such that
the sampling model $(X+\Delta,\Theta,\overrightarrow{\alpha})$ is also $p$-stable,
whenever
$\overrightarrow{\alpha}\in(\mathcal{M}(\mathbb{R}^d))^{(t)}$,
$\Theta\in(\mathit{W}_{0}^{1})^{(r)}$, and
$\norm{\Delta}_\infty+\|\Phi-\Theta\|_{(\mathit{W}^{1})^{(r)}}+\|\overrightarrow{\mu}-\overrightarrow{\alpha}\|_{(\mathcal{M}(\mathbb{R}^d))^{(t)}}<\epsilon_{0}$.
\end{thm}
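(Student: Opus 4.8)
The plan is to collapse the entire statement into a single operator-norm estimate and then invoke \lemref{nutshell}. Write $U = U_{(X,\Phi,\overrightarrow{\mu})}$ for the sampling operator of the given model; since the model is $p$-stable, Proposition~\ref{prop5.1} guarantees that $U$ satisfies \eqref{5.1} with some lower bound $\eta_p > 0$. Let $U_{\Delta} = U_{(X+\Delta,\Theta,\overrightarrow{\alpha})}$ be the sampling operator of the fully perturbed model. By \lemref{nutshell}, it suffices to produce $\epsilon_0 > 0$ such that $\|U - U_{\Delta}\| < \eta_p$ whenever $\norm{\Delta}_\infty + \|\Phi-\Theta\|_{(\mathit{W}^{1})^{(r)}} + \|\overrightarrow{\mu}-\overrightarrow{\alpha}\|_{(\mathcal{M}(\mathbb{R}^d))^{(t)}} < \epsilon_0$.

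First I would interpose the jittered-but-otherwise-unperturbed operator $W = U_{(X+\Delta,\Phi,\overrightarrow{\mu})}$ and use the triangle inequality,
\[
\|U - U_{\Delta}\| \le \|U - W\| + \|W - U_{\Delta}\|.
\]
The term $\|U - W\|$ is exactly the quantity controlled in the proof of \thmref{teo5.1}: because each entry of $U$ has the form $(\phi^{i}\ast\mu^{l})(x_{j}-k)$ with $\Phi \in (\mathit{W}_{0}^{1})^{(r)}$, the jitter difference is governed by the oscillation of $\Phi\ast\overrightarrow{\mu}$ over balls of radius $\norm{\Delta}_\infty$, which tends to $0$ as $\norm{\Delta}_\infty \to 0$. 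The term $\|W - U_{\Delta}\|$ is the combined generator-and-measure perturbation treated in \thmref{teo3.1} and \thmref{teo3.3} (and merged in \thmref{teo3.4}), except that it is now measured against the jittered set $X+\Delta$; those estimates give a bound of the form $\|W - U_{\Delta}\| \le C\big(\|\Phi-\Theta\|_{(\mathit{W}^{1})^{(r)}} + \|\overrightarrow{\mu}-\overrightarrow{\alpha}\|_{(\mathcal{M}(\mathbb{R}^d))^{(t)}}\big)$.

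The one point demanding care—and the main obstacle—is the uniformity of the constant $C$ across the family of jittered sets. Without it, a naive chaining (first \thmref{teo5.1}, then \thmref{teo3.4} applied to the intermediate model $(X+\Delta,\Phi,\overrightarrow{\mu})$) fails to yield a single threshold, since the $\epsilon_0$ returned by \thmref{teo3.4} depends on the lower stability bound of that intermediate model, which in turn depends on $\Delta$. The resolution is that the perturbation estimates of \thmref{teo3.1} and \thmref{teo3.3} depend on the sampling set only through its separation constant. If $\delta$ is the separation constant of $X$ and we require $\epsilon_0 \le \delta/4$, then $\norm{\Delta}_\infty < \delta/4$ forces the separation constant of $X+\Delta$ to be at least $\delta - 2\norm{\Delta}_\infty \ge \delta/2$; equivalently, the jittered model's lower stability bound stays $\ge \eta_p/2$ for such $\Delta$. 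Hence a single constant $C$ serves every admissible $\Delta$, each of the three perturbation quantities contributes an arbitrarily small amount to $\|U - U_{\Delta}\|$, and shrinking $\epsilon_0$ further (while keeping $\epsilon_0 \le \delta/4$) drives $\|U-U_{\Delta}\|$ below $\eta_p$. An application of \lemref{nutshell} then completes the proof.
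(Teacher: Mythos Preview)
Your proof is correct and follows the same operator-norm strategy the paper uses---indeed, it is essentially the proof of Theorem~\ref{bigshell}, which the paper presents immediately afterward as a slight sharpening of this result and establishes via the same triangle-inequality splitting together with \lemref{nutshell}. Your explicit treatment of the uniformity of the constant $C$ over the jittered sets (through the separation-constant bound $\epsilon_0 \le \delta/4$) fills in a detail the paper leaves implicit in its one-line ``direct corollary'' and ``standard $\epsilon/3$'' remarks.
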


We leave it to the reader to formulate other perturbation theorems resulting from different combinations of
Theorems \ref{teo3.1}, \ref{teo3.3}, and \ref{teo5.1}. We conclude this section with a slightly stronger version (due to Lemma \ref{nutshell})
of Theorem \ref{teo5.111}.

\bt\label{bigshell}
Let $(X,\Phi,\overrightarrow{\mu})$ be a $p$-stable sampling model for some $p\in[1,\infty]$ and $U$ be its sampling operator.
Let also $(X+\Delta,\Theta,\overrightarrow{\alpha})$ be a perturbed sampling model with the sampling operator $U_\Delta$.
Then for every $\epsilon > 0$ there exists $\epsilon_0 > 0$ such that $\norm{U-U_\Delta} < \epsilon$,
%the sampling model  is also $p$-stable,
whenever
$\overrightarrow{\alpha}\in(\mathcal{M}(\mathbb{R}^d))^{(t)}$,
$\Theta\in(\mathit{W}_{0}^{1})^{(r)}$, and
\[\norm{\Delta}_\infty+\|\Phi-\Theta\|_{(\mathit{W}^{1})^{(r)}}+\|\overrightarrow{\mu}-\overrightarrow{\alpha}\|_{(\mathcal{M}(\mathbb{R}^d))^{(t)}}<\epsilon_{0}.\]
\et

%++++++++++++++++++++++++++++++++++++++++++++++++++++++++++++++++++++++++++++++++++++

\subsection{Perfect reconstruction and localized frames.}\

In this section we show that a frame algorithm can be used to reconstruct $f\in V^2(\Phi)$ from its samples.
We also obtain a useful modification of the above results using
the theory of localized frames developed in \cite{kg1}
(see Definition \ref{def3.2}). In the previous section,
the number $p\in[1,\infty]$ was fixed, that is, we stated,
for example, that if $X$ is a $\overrightarrow{\mu}$-sampling set for $V^p(\Phi)$,
then $X$ is a $\overrightarrow{\mu}$-sampling set for $V^p(\Theta)$ for \emph{the same}
$p\in[1,\infty]$, as soon as $\Theta$ is sufficiently close to $\Phi$
in the appropriate norm. Here, we claim that
if $X$ is a $\overrightarrow{\mu}$-sampling set for $V^2(\Phi)$,
then $X$ is a $\overrightarrow{\mu}$-sampling set for $V^p(\Theta)$ for \emph{all}
$p\in[1,\infty]$, as soon as $\Theta$ is sufficiently close to $\Phi$,
$\Phi$ satisfies a mild decay condition, and $\overrightarrow{\mu}$ belongs to $\mathcal M_s(\RR^d)$ for some $s > d$.
It is natural to ask whether one can replace $V^2(\Phi)$ in the above statement
with $V^q(\Phi)$, \emph{for some} $q\in[1,\infty]$. Under certain
assumptions the answer is ``yes'', but it turns out to be a much harder
problem as shown in \cite{ABK}.

\begin{defi}\label{def3.1} Let $\mathcal{H}$ be a Hilbert space of functions
and $V$ a closed subspace of $\mathcal{H}$. Let
$\{\Psi_{x_{j}}=(\psi_{x_{j}}^1,\ldots,\psi_{x_{j}}^t)^{T}\}_{j\in
J}$ be a countable collection of vectors of functions in $V$.
We say that $\{\Psi_{x_{j}}\}_{j\in J}$ is a frame for $V$ if there
exist constants $0<A\leq B<\infty$ such that
\begin{displaymath}
A\|f\|_\HH\leq \|\langle f,\Psi_{x_{j}}\rangle\|_{(\ell^2(J))^{(t)}}\leq
B\|f\|_\HH, \mbox{ for all } f\in V,
\end{displaymath}
where $\langle f,\Psi_{x_{j}}\rangle=(\langle
f,\psi_{x_{j}}^1\rangle,\ldots,\langle
f,\psi_{x_{j}}^t\rangle)\in\mathbb{C}^t$.
\end{defi}

\begin{obs}\label{obs3.2} Notice that the above is not quite the standard
definition of a frame in a Hilbert space. This is due to the way we
defined the norm in \eqref {ellpnorm}. Nevertheless, it is easily
seen that $\{\Psi_{x_{j}}\}_{j\in J}$ is a frame for $V$ according
to the above definition if and only if $\{\psi^i_{x_j},\
i=1,2,\dots, t,\ j\in J\}$ is a frame for $V$ according to the
standard definition. The frame bounds, however, may be different.
\end{obs}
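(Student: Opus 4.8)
The plan is to reduce the claimed equivalence to the elementary comparison of the $\ell^1$- and $\ell^2$-norms on the finite-dimensional space $\CC^t$. The two frame conditions differ only in how one aggregates, over the index $l=1,\dots,t$, the quantities
\[
a_l(f):=\left(\sum_{j\in J}\abs{\la f,\psi^l_{x_j}\ra}^2\right)^{1/2},\qquad f\in V.
\]
For Definition \ref{def3.1} the relevant quantity is the $(\ell^2(J))^{(t)}$-norm of $\la f,\Psi_{x_j}\ra$, which by the choice of norm in \eqref{ellpnorm} is precisely the $\ell^1$-sum $\sum_{l=1}^t a_l(f)$. For the standard frame definition applied to the combined system $\{\psi^i_{x_j}:\ i=1,\dots,t,\ j\in J\}$, the relevant quantity is the $\ell^2$-sum $\left(\sum_{l=1}^t a_l(f)^2\right)^{1/2}$. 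So everything hinges on comparing the $\ell^1$- and $\ell^2$-norms of the single vector $(a_1(f),\dots,a_t(f))\in\RR^t$.

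First I would record the standard norm comparison on $\RR^t$: for every $f\in V$,
\[
\left(\sum_{l=1}^t a_l(f)^2\right)^{1/2}\ \le\ \sum_{l=1}^t a_l(f)\ \le\ \sqrt{t}\left(\sum_{l=1}^t a_l(f)^2\right)^{1/2},
\]
where the left inequality holds because the cross terms in $\left(\sum_l a_l(f)\right)^2$ are nonnegative, and the right one is Cauchy--Schwarz. A useful feature is that this chain is valid as a comparison of extended reals, so both aggregated quantities are finite simultaneously; no separate convergence argument is needed.

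Given this, the two implications are purely a matter of transporting the frame bounds. If $\{\psi^i_{x_j}\}$ is a frame in the standard sense with bounds $A',B'$, so that $\sqrt{A'}\norm{f}_\HH\le\left(\sum_l a_l(f)^2\right)^{1/2}\le\sqrt{B'}\norm{f}_\HH$, then chaining with the comparison above gives $\sqrt{A'}\norm{f}_\HH\le\sum_l a_l(f)\le\sqrt{tB'}\norm{f}_\HH$, i.e. $\{\Psi_{x_j}\}$ is a frame in the sense of Definition \ref{def3.1} with bounds $A=\sqrt{A'}$ and $B=\sqrt{tB'}$. Conversely, if $\{\Psi_{x_j}\}$ satisfies $A\norm{f}_\HH\le\sum_l a_l(f)\le B\norm{f}_\HH$, then the comparison yields $\tfrac{A^2}{t}\norm{f}_\HH^2\le\sum_l a_l(f)^2\le B^2\norm{f}_\HH^2$, so $\{\psi^i_{x_j}\}$ is a standard frame with $A'=A^2/t$ and $B'=B^2$. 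This settles both directions and makes explicit the assertion in the remark that the frame bounds may differ, the discrepancy being the factors $t$ and the squaring coming from the two conventions. There is no genuine obstacle; the only point demanding care is matching the squared-norm convention of the standard frame definition against the unsquared convention of Definition \ref{def3.1}.
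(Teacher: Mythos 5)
Your proof is correct: the paper offers no written argument for this remark (it is asserted as ``easily seen''), and your reduction to the $\ell^1$--$\ell^2$ norm comparison on $\CC^t$, namely $\bigl(\sum_{l=1}^t a_l^2\bigr)^{1/2}\le\sum_{l=1}^t a_l\le\sqrt{t}\,\bigl(\sum_{l=1}^t a_l^2\bigr)^{1/2}$, is precisely the intended elementary reasoning. You also correctly handle the squared-versus-unsquared bound conventions and make explicit how the frame bounds change, which substantiates the paper's final sentence.
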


%Definition3.4
\begin{defi}\label{def3.4} Let $V$ be a closed subspace of the
Hilbert space $\mathcal{H}$. Let
$\{\Psi_{x_{j}}=(\psi_{x_{j}}^1,\ldots,\psi_{x_{j}}^t)^{T}\}_{j\in
J}$ be a frame for $V$. The frame operator associated with the frame
$\{\Psi_{x_{j}}\}_{j\in J}$ is the operator $S : V\rightarrow V$
defined by $S(f)=\sum_{j\in J}\langle
f,\Psi_{x_{j}}\rangle\Psi_{x_{j}}$, for all $f\in V$.
The (canonical) dual frame $\{\widetilde{\Psi}_{x_{j}}\}_{j\in J}$ of
the frame $\{\Psi_{x_{j}}\}_{j\in J}$ is a sequence of vectors given
by
$\{\widetilde{\Psi}_{x_{j}}=(\widetilde{\psi}_{x_{j}}^{1},\ldots,\widetilde{\psi}_{x_{j}}^{t})^{T}\}_{j\in
J}$, where $\widetilde{\psi}_{x_{j}}^{s}=S^{-1}\psi_{x_{j}}^{s}$,
$1\leq s\leq t$.
\end{defi}

\begin{obs}
It is well know that  a frame operator $S$ is bounded, invertible, self-adjoint, and positive \cite{DS52}.
Hence, the canonical dual frame is well defined. There may exist other dual frames but we will
refrain from defining the notion.
\end{obs}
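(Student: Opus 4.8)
The plan is to recognize $S$ as the classical Hilbert-space frame operator and to extract all four properties from the single factorization $S = T^*T$, where $T$ is the analysis operator. First I would invoke Remark~\ref{obs3.2} to pass from the vector-valued frame $\{\Psi_{x_{j}}\}_{j\in J}$ to the ordinary frame $\{\psi^{s}_{x_{j}}:\ 1\le s\le t,\ j\in J\}$ for $V$, with some frame bounds $0 < A' \le B' < \infty$. Since the norm on $(\ell^2(J))^{(t)}$ is a sum of $\ell^2$-norms, the two definitions differ only by harmless constants, and the operator of Definition~\ref{def3.4} is exactly the frame operator $S(f) = \sum_{j,s}\la f,\psi^{s}_{x_{j}}\ra\,\psi^{s}_{x_{j}}$ of this ordinary frame.

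Next I would introduce the analysis operator $T:V\to\lt$ (over the countable index set of pairs $(j,s)$) given by $Tf = (\la f,\psi^{s}_{x_{j}}\ra)_{j,s}$, together with its adjoint, the synthesis operator $T^{*}(c) = \sum_{j,s} c_{j,s}\,\psi^{s}_{x_{j}}$. The upper frame bound states $\norm{Tf}_{\lt}^{2}\le B'\norm{f}_{\HH}^{2}$, so $T$ is bounded with $\norm{T}\le\sqrt{B'}$; consequently $T^{*}$ is bounded, every sequence in $\lt$ is summable against the Bessel sequence $\{\psi^{s}_{x_{j}}\}$, the series defining $S(f)$ converges (unconditionally) in $V$, and $S = T^{*}T$ maps $V$ into $V$ because each $\psi^{s}_{x_{j}}\in V$ and $V$ is closed. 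From this factorization the first three properties are immediate: self-adjointness because $(T^{*}T)^{*}=T^{*}T$; positivity because $\la Sf,f\ra = \la Tf,Tf\ra = \norm{Tf}_{\lt}^{2}\ge 0$; and boundedness because $\norm{S}\le\norm{T^{*}}\,\norm{T}=\norm{T}^{2}\le B'$.

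It remains to establish invertibility. Combining both frame inequalities gives $A'\norm{f}_{\HH}^{2}\le\la Sf,f\ra\le B'\norm{f}_{\HH}^{2}$ for all $f\in V$, that is, $A'\,I\le S\le B'\,I$ in the operator order on $V$. Hence $I-S/B'$ is a positive self-adjoint operator with $\norm{I-S/B'}\le 1-A'/B'<1$, so $S/B' = I-(I-S/B')$ is invertible by a Neumann series and therefore $S$ is invertible with bounded inverse $S\inv$. This makes $\widetilde{\psi}^{s}_{x_{j}} = S\inv\psi^{s}_{x_{j}}$ meaningful, so the canonical dual frame of Definition~\ref{def3.4} is well defined. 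There is no substantive obstacle here: the argument is the classical one, and the only point requiring genuine care is the bookkeeping forced by the vector-valued inner-product/norm convention, which is exactly what Remark~\ref{obs3.2} lets us dispose of at the very start.
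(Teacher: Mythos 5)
Your proof is correct and is precisely the classical Duffin--Schaeffer argument that the paper invokes by its citation to \cite{DS52}; the paper offers no proof of this remark beyond that citation. Your reduction via Remark~\ref{obs3.2} to an ordinary scalar frame, the factorization $S=T^{*}T$ giving boundedness, self-adjointness, and positivity, and the invertibility obtained from $A'I\le S\le B'I$ via a Neumann series are all standard and handled correctly, including the one delicate point that the norm estimate $\|I-S/B'\|\le 1-A'/B'$ relies on self-adjointness.
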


The next proposition shows that a frame algorithm can be used to reconstruct a function from its samples.

%Proposition4.3
\begin{prop}\label{prop4.3} Let $\Phi\in(\mathit{W}_{0}^{1})^{(r)}$, $\overrightarrow{\mu}\in(\mathcal{M}(\mathbb{R}^d))^{(t)}$, and $X$ be
 a $\overrightarrow{\mu}$-sampling set for $V^2(\Phi)$.
 Then
there exists a sequence of vectors of functions
$\{\Psi_{x_{j}}\}_{j\in J}$, which is a frame for $V^2(\Phi)$
and $\langle
f,\Psi_{x_{j}}\rangle=(f\ast\overrightarrow{\mu})(x_{j})$ for all
$f\in V^2(\Phi)$ and $j\in J$. Moreover, every function $f\in
V^2(\Phi)$ can be recovered from the sequence of its samples
$\{(f\ast\overrightarrow{\mu})(x_{j})\}_{j\in J}$ via
\begin{equation}\label{3.8.5.1}
f(x)=\sum_{j\in J}(f\ast\overrightarrow{\mu})(x_{j})\widetilde{\Psi}_{x_{j}}(x),
\end{equation}
where $\{\widetilde{\Psi}_{x_{j}}\}_{j\in J}$ is the dual frame of
$\{\Psi_{x_{j}}\}_{j\in J}$ and the series (\ref{3.8.5.1})
converges unconditionally in $V^2(\Phi)$.
\end{prop}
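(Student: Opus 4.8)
The plan is to realize the samples as inner products against a concretely constructed family of vectors and then to read off the frame property directly from the sampling inequality \eqref{1.2}. First I would note that the unconditional basis assumption \eqref{2.1.1} makes $V^2(\Phi)$ a closed subspace of $L^2(\mathbb{R}^d)$, hence a Hilbert space in its own right. For fixed $j\in J$ and $1\le l\le t$ I would consider the linear functional $f\mapsto (f\ast\mu^l)(x_j)$ on $V^2(\Phi)$; the upper bound in \eqref{1.2} gives $|(f\ast\mu^l)(x_j)|\le \|(f\ast\overrightarrow{\mu})(X)\|_{(\ell^2(J))^{(t)}}\le B_2\|f\|_{L^2}$, so this functional is bounded. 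The Riesz representation theorem then supplies a unique $\psi^l_{x_j}\in V^2(\Phi)$ with $\langle f,\psi^l_{x_j}\rangle=(f\ast\mu^l)(x_j)$ for every $f\in V^2(\Phi)$. Setting $\Psi_{x_j}=(\psi^1_{x_j},\dots,\psi^t_{x_j})^T$ yields, by the definition of $\langle f,\Psi_{x_j}\rangle$ in Definition \ref{def3.1}, the identity $\langle f,\Psi_{x_j}\rangle=(f\ast\overrightarrow{\mu})(x_j)$ for all $f\in V^2(\Phi)$ and $j\in J$, which is the first assertion.

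With this identification, the sampling inequality \eqref{1.2} for $p=2$ reads verbatim as the frame inequality
\[
A_2\|f\|_{L^2}\le \|\langle f,\Psi_{x_j}\rangle\|_{(\ell^2(J))^{(t)}}\le B_2\|f\|_{L^2},\qquad f\in V^2(\Phi),
\]
so $\{\Psi_{x_j}\}_{j\in J}$ is a frame for $V^2(\Phi)$ in the sense of Definition \ref{def3.1}. I would then invoke the Remark following Definition \ref{def3.4}: the associated frame operator $S$ is bounded, self-adjoint, positive, and invertible on $V^2(\Phi)$. Writing $\widetilde{\Psi}_{x_j}$ for the canonical dual frame and applying $S^{-1}$ to $Sf=\sum_{j\in J}\langle f,\Psi_{x_j}\rangle\Psi_{x_j}$ gives $f=\sum_{j\in J}\langle f,\Psi_{x_j}\rangle\widetilde{\Psi}_{x_j}$, the unconditional convergence in $V^2(\Phi)$ being the standard unconditional convergence of frame expansions. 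Substituting $\langle f,\Psi_{x_j}\rangle=(f\ast\overrightarrow{\mu})(x_j)$ produces the series in \eqref{3.8.5.1}.

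Finally I would upgrade the $V^2(\Phi)$-norm convergence to the pointwise statement of \eqref{3.8.5.1}. Because $\Phi\in(\mathit{W}_{0}^{1})^{(r)}$, every element of $V^2(\Phi)$ is continuous and point evaluation is uniformly bounded there, i.e. $\|g\|_{L^\infty}\le C\|g\|_{L^2}$ for $g\in V^2(\Phi)$; consequently convergence in the $V^2(\Phi)$-norm forces uniform, hence pointwise, convergence, which gives the displayed identity at every $x$. I expect the only genuine subtlety to be this last passage, together with careful bookkeeping of the vector-valued inner product in Definitions \ref{def3.1} and \ref{def3.4}; the boundedness needed for the Riesz step and the frame inequality itself are immediate consequences of \eqref{1.2}, so the core of the argument is simply recognizing the sampling bounds as frame bounds.
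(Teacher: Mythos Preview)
Your proposal is correct and follows essentially the same route as the paper: construct each $\psi^l_{x_j}$ via the Riesz representation theorem (using the upper sampling bound for continuity of the functional), read the frame inequality directly off \eqref{1.2}, and invoke the standard frame reconstruction formula with its unconditional convergence. Your final paragraph on upgrading to pointwise convergence is an extra courtesy the paper does not bother with---it simply records the identity as an equality in $V^2(\Phi)$---but it is harmless and correct.
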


The frame $\{\Psi_{x_{j}}\}_{j\in J}$ constructed in the previous proposition
will be called a \emph{$(\overrightarrow{\mu},X)$-sampling frame} for $V^2(\Phi)$.
The main idea of this section is to use the fact that if such a frame is  localized then
%the sampling frame for $V^2(\Phi)$
it is also a Banach frame \cite{kg1} for $V^p(\Phi)$, $p\in[1,\infty)$.

\begin{obs}\label{obs1}
Observe that, in general, the frame operator $S$ is the product of the \emph{analysis operator}
$T:$ $V\to (\lt(J))^{(t)}$, defined by $Tf = \{\langle f,\Psi_{x_{j}}\rangle\}_{j\in J} = \{(\la f,\psi_{x_j}^1\ra,\dots,\la f,\psi_{x_j}^t\ra)\}_{j\in J}$ and its adjoint, that is $S = T^*T$. Since $\Phi$ generates a Riesz basis,
it is immediate that in case of a $(\overrightarrow{\mu},X)$-sampling frame
its analysis operator is isomorphic to the sampling operator $U = U_{(X,\Phi,\overrightarrow{\mu})}$.
\end{obs}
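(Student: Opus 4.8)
The plan is to treat the two assertions of the remark in turn, reducing each to a standard fact: the identity $S = T^*T$ will follow from the textbook factorization of a Hilbert-space frame operator once the vector notation is unpacked, while the claim that $T$ is isomorphic to $U$ will follow by composing $T$ with the coordinate isomorphism furnished by the Riesz-basis hypothesis \eqref{2.1.1}.

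For the first assertion I would begin by stripping the vector notation. Writing $\Psi_{x_j} = (\psi^1_{x_j},\dots,\psi^t_{x_j})^T$ and expanding, the frame operator of Definition~\ref{def3.4} reads $Sf = \sum_{j\in J}\sum_{s=1}^t \la f,\psi^s_{x_j}\ra\,\psi^s_{x_j}$, which is exactly the frame operator of the single-indexed family $\{\psi^s_{x_j}:\,1\le s\le t,\ j\in J\}$. By Remark~\ref{obs3.2} this family is an ordinary frame for $V = V^2(\Phi)$, so I may work in the genuine Hilbert space $(\lt(J))^{(t)}$ equipped with its standard inner product $\la C,D\ra = \sum_{s,j} c^s_j\overline{d^s_j}$, as opposed to the equivalent $\ell^1$-sum-of-$\ell^2$-norms convention used throughout the paper, which enters only when one reads off frame bounds. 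Then $Tf = \{\la f,\psi^s_{x_j}\ra\}_{s,j}$ is bounded from $V$ into $(\lt(J))^{(t)}$ by the upper frame inequality, and the computation $\la Tf,C\ra = \sum_{s,j}\la f,\psi^s_{x_j}\ra\,\overline{c^s_j} = \la f,\ \sum_{s,j} c^s_j\psi^s_{x_j}\ra$ identifies the adjoint as the synthesis operator $T^*C = \sum_{s,j} c^s_j\psi^s_{x_j}$. Composing yields $T^*Tf = \sum_{s,j}\la f,\psi^s_{x_j}\ra\,\psi^s_{x_j} = Sf$, the desired identity.

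For the second assertion I would introduce the coordinate map $\iota:(\lt(\zd))^{(r)}\to V^2(\Phi)$, $\iota C = \sum_{k\in\zd} C_k^T\Phi_k$. The unconditional-basis hypothesis \eqref{2.1.1} with $p=2$ says precisely that $\iota$ is a bounded, boundedly invertible linear bijection, i.e.\ a Banach-space isomorphism onto $V^2(\Phi)$. By Proposition~\ref{prop4.3} the $(\overrightarrow{\mu},X)$-sampling frame satisfies $\la f,\Psi_{x_j}\ra = (f\ast\overrightarrow{\mu})(x_j)$, so for $f = \iota C$ one has $T(\iota C) = \{(f\ast\overrightarrow{\mu})(x_j)\}_{j\in J} = (f\ast\overrightarrow{\mu})(X) = UC$ by Definition~\ref{defSampOp}. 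Hence $U = T\circ\iota$, equivalently $T = U\circ\iota^{-1}$; since $\iota$ is an isomorphism, $T$ and $U$ coincide up to this fixed change of coordinates, which is the precise content of the stated isomorphism. In particular $T$ and $U$ share the same range and the same upper and lower bounds up to the constants $m_2,M_2$ of \eqref{2.1.1}.

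The only point demanding care -- rather than a genuine obstacle -- is keeping the two norm structures on $(\lt(J))^{(t)}$ apart: the adjoint and the factorization $S=T^*T$ must be computed in the honest Hilbert inner product, whereas the frame and sampling inequalities are phrased in the equivalent $\ell^1$-sum-of-$\ell^2$-norms. Once Remark~\ref{obs3.2} is invoked to pass between the two, everything reduces to bookkeeping, and no estimate beyond the already-established boundedness of $\iota$ and $T$ is required.
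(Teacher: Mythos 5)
Your proof is correct and fills in exactly the argument the paper treats as immediate: the factorization $S = T^*T$ by identifying $T^*$ as the synthesis operator in the genuine Hilbert inner product, and the identity $U = T\circ\iota$, where $\iota C = \sum_{k}C_k^T\Phi_k$ is the coordinate isomorphism guaranteed by \eqref{2.1.1} with $p=2$, combined with $\langle f,\Psi_{x_j}\rangle = (f\ast\overrightarrow{\mu})(x_j)$ from Proposition \ref{prop4.3}. The paper gives no written proof of this remark, and your one point of care---computing the adjoint with respect to the honest Hilbert-space structure on $(\lt(J))^{(t)}$ rather than the paper's sum-of-norms convention, passing between the two via Remark \ref{obs3.2}---is precisely the bookkeeping the authors left implicit.
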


\begin{defi}\label{def3.2} Let $V$ be a closed subspace of the
Hilbert space $\mathcal{H}$. Let
$\{\Psi_{x_{j}}=(\psi_{x_{j}}^1,\ldots,\psi_{x_{j}}^t)^{T}\}_{j\in
J}$ be a frame for $V$, and
$\{G_{k}=(g_{k}^{1},\ldots,g_{k}^{r})^{T}\}_{k\in\mathbb{Z}^d}$ be a
Riesz basis for $V$, i.e., a condition similar to (\ref{2.1.1}) is satisfied.
We say that the frame $\{\Psi_{x_{j}}\}_{j\in J}$ is (polynomially)
$s$-localized with respect to the Riesz basis
$\{G_{k}\}_{k\in\mathbb{Z}^d}$, %with decay $s>0$
%or simply $s$-localized),
if
\begin{equation}\label{3.8.1}
|\langle G_{k},\Psi_{x_{j}}^{T}\rangle|\leq C_{1}(1+|x_{j}-k|)^{-s},
\end{equation}
and
\begin{equation}\label{3.8.2}
|\langle \widetilde{G}_{k},\Psi_{x_{j}}^{T}\rangle|\leq
C_{2}(1+|x_{j}-k|)^{-s},
\end{equation}
for all $j\in J$ and $k\in\mathbb{Z}^d$. Here, the constants $C_{1},
C_{2}>0$ are independent of $j$ and $k$, $|\langle
G_{k},\Psi_{x_{j}}^{T}\rangle|=\sum_{i=1}^{r}\sum_{l=1}^{t}|\langle
g_{k}^{i},\psi_{x_{j}}^{l}\rangle|$,
$\{\widetilde{G}_{k}\}_{k\in\mathbb{Z}^d}$ is the dual Riesz basis
of $\{G_{k}\}_{k\in\mathbb{Z}^d}$, %$\langle
%G_{k},\Psi_{x_{j}}^{T}\rangle$ is the matrix given by
%\begin{displaymath}
%\langle G_{k},\Psi_{x_{j}}^{T}\rangle=\left(\begin{array}{ccc}
%\langle g_{k}^{1},\psi_{x_{j}}^{1}\rangle&
%\ldots&\langle g_{k}^{1},\psi_{x_{j}}^{t}\rangle\\
%\vdots&&\vdots\\
%\langle g_{k}^{r},\psi_{x_{j}}^{1}\rangle&\ldots&\langle
%g_{k}^{r},\psi_{x_{j}}^{t}\rangle
%\end{array}\right).
%\end{displaymath}
%The matrix
and $|\langle \widetilde{G}_{k},\Psi_{x_{j}}^{T}\rangle|$ is
defined similarly to $|\langle {G}_{k},\Psi_{x_{j}}^{T}\rangle|$.
\end{defi}
\begin{obs}\label{obs3.2.1} Let $V$ be a closed subspace of a
Hilbert space $\mathcal{H}$. Assume that
$\{G_{k}=(g_{k}^{1},\ldots,g_{k}^{r})^{T}\}_{k\in\mathbb{Z}^d}$ is a
Riesz basis for $V$. The dual Riesz basis of the Riesz basis
$\{G_{k}\}_{k\in\mathbb{Z}^d}$ is the sequence of vectors
$\{\widetilde{G}_{k}=(\widetilde{g}_{k}^{1},\ldots,\widetilde{g}_{k}^{r})^{T}\}_{k\in\mathbb{Z}^d}$
satisfying $\langle\widetilde{G}_{k}, G_{l}^{T}\rangle=\delta_{kl}I$, %if $k=l$,
%and $\langle\widetilde{G}_{k}, G_{l}^{T}\rangle=O$, otherwise. Here
where $I$ is the $r\times r$ identity matrix, and $\delta_{kl}$ is the Kronecker delta.
Since a Riesz basis $\{G_{k}\}$ is also a frame, $\{\widetilde{G}_{k}\}$ is, in fact, the canonical dual frame for
$\{G_{k}\}$. In this case it is the unique dual frame.
\end{obs}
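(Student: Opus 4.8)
The plan is to reduce the three assertions of the remark to the classical Hilbert-space fact that a Riesz basis possesses a unique dual frame, which is biorthogonal to it and coincides with the canonical dual. First I would set up the frame operator. Write $F\colon (\lt(\zd))^{(r)}\to V$ for the synthesis map $FC=\sum_{k\in\zd}C_k^{T}G_k$; the Riesz basis hypothesis (the analog of \eqref{2.1.1} with $p=2$ invoked in Definition~\ref{def3.2}) says precisely that $F$ is a bounded bijection of $(\lt(\zd))^{(r)}$ onto $V$ with bounded inverse. Its Hilbert-space adjoint $F^{*}\colon V\to(\lt(\zd))^{(r)}$ is the analysis map $F^{*}f=\{\la f,G_k\ra\}_{k}$, and it inherits the same isomorphism property. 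The frame operator is then $S=FF^{*}\colon V\to V$; being of the form $FF^{*}$ it is bounded, self-adjoint, and positive, and since $F$ is invertible, so is $S$. Thus $S\inv$ is a well-defined bounded, self-adjoint, positive operator on $V$, and the canonical dual frame is $\dual{G}_k=S\inv G_k$, i.e.\ $\dual{g}_k^{i}=S\inv g_k^{i}$.

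Next I would verify the biorthogonality relation $\la\dual{G}_k,G_l^{T}\ra=\delta_{kl}I$, which simultaneously establishes that the canonical dual frame is the ``dual Riesz basis'' named in the remark. The identity $SS\inv=I$ together with self-adjointness of $S\inv$ yields the reconstruction formula $h=\sum_{k}\la h,\dual{G}_k\ra G_k$ for every $h\in V$, which in component form reads $h=\sum_{k}\sum_{i}\la h,\dual{g}_k^{i}\ra g_k^{i}$. Applying this to each basis element $h=g_l^{j}$ and comparing with the trivial expansion of $g_l^{j}$ in $\{g_k^{i}\}$, the uniqueness of Riesz-basis coefficients (a consequence of the injectivity of $F$) forces $\la g_l^{j},\dual{g}_k^{i}\ra=\delta_{kl}\delta_{ij}$; conjugate symmetry of the inner product then gives $\la\dual{g}_k^{i},g_l^{j}\ra=\delta_{kl}\delta_{ij}$, which is exactly the asserted matrix identity.

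Finally, for uniqueness of the dual frame I would argue once more from injectivity of $F$. Suppose $\{H_k\}_{k\in\zd}$ is any frame for $V$ for which the reconstruction identity $h=\sum_{k}\la h,H_k\ra G_k$ holds for all $h\in V$. Then for each fixed $h$ both $\{\la h,H_k\ra\}_k$ and $\{\la h,\dual{G}_k\ra\}_k$ are coefficient vectors in $(\lt(\zd))^{(r)}$ that $F$ maps to the same element $h$; since $F$ is injective they coincide, and as $h$ was arbitrary we conclude $H_k=\dual{G}_k$ for every $k$. Hence the Riesz basis admits exactly one dual frame, namely $\{\dual{G}_k\}$.

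The only real care required is the vector/matrix bookkeeping: $\la\dual{G}_k,G_l^{T}\ra$ is an $r\times r$ matrix whose $(i,j)$ entry is the scalar $\la\dual{g}_k^{i},g_l^{j}\ra$, and throughout one must keep the outer lattice index $k$ and the inner component index $i$ distinct. Once the collection $\{g_k^{i}:k\in\zd,\ 1\le i\le r\}$ is recognized as an ordinary scalar Riesz basis for $V$, all three claims are specializations of the classical theory, so the main obstacle is organizational rather than conceptual.
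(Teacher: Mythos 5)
Your proof is correct. The paper gives no argument for this remark at all---it treats the claims as classical facts, resting on the Duffin--Schaeffer citation and the frame operator $S$ of Definition~\ref{def3.4}---and your derivation (identifying the biorthogonal system with the canonical dual via $\widetilde{G}_{k}=S^{-1}G_{k}$, extracting $\langle\widetilde{G}_{k},G_{l}^{T}\rangle=\delta_{kl}I$ from uniqueness of Riesz-basis coefficients, and getting uniqueness of the dual frame from injectivity of the synthesis operator) is precisely the standard argument the authors take for granted, carried out in the same framework they set up.
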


\begin{defi}\label{sRiesz}
Let $\Phi=(\phi^1,\ldots,\phi^r)^{T}\in(\mathit{W}_{0}^{1})^{(r)}\subset(L^2(\mathbb{R}^d))^{(r)}$ and $s>d$.
We say that $\Phi$ is an $s$-localized Riesz generator for $V^2(\Phi)$, denoted $\Phi\in \mathcal{W}_s$, if
%be given, and assume that $\Phi$ satisfies the following conditions:
\begin{itemize}
\item $\{\Phi_{k}=\Phi(\cdot-k)\}_{k\in\mathbb{Z}^d}$ generates a Riesz
basis for $V^2(\Phi)$, i.e., condition (\ref{2.1.1}) holds for
$p=2$; %\item $\Phi$ is a continuous function on $\mathbb{R}^d$.
\item The components of  $\Phi$ satisfy the decay
condition
\begin{equation}\label{3.8.3}
|\phi^i(x)|\leq C_{0}^{i}(1+|x|)^{-s},
\end{equation}
for all $1\leq i\leq r$ and some
$C_{0}^{i}>0$ independent of $x\in\mathbb{R}^d$.
\end{itemize}
\end{defi}

\begin{obs}\label{obs3.3}
If $\Phi\in\mathcal W_s$, then \eqref{2.1.1} holds for every $p\in[1,\infty]$ as shown in \cite{aagr1}.
\end{obs}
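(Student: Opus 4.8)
The statement asserts that the Riesz-basis inequality \eqref{2.1.1}, known to hold at $p=2$ by the very definition of $\mathcal W_s$, in fact persists for every $p\in[1,\infty]$. The plan is to treat the two inequalities in \eqref{2.1.1} separately: the upper (synthesis) bound is soft and needs only the amalgam membership of $\Phi$, while the lower (stability) bound is the substantial part and rests on passing to a dual generator of comparable decay.

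For the upper bound I would first note that the decay hypothesis \eqref{3.8.3} with $s>d$ forces each component into $\mathit{W}^{1}$, since $\sum_{k}\operatorname{esssup}_{x\in[0,1]^d}|\phi^i(x+k)|\le C_0^i\sum_k(1+|k|)^{-s}<\infty$; in any case $\Phi\in(\mathit{W}_0^1)^{(r)}$ is built into Definition~\ref{sRiesz}. The synthesis map $C\mapsto\sum_k C_k^T\Phi_k$ then obeys a semidiscrete Young-type inequality $\|\sum_k C_k^T\Phi_k\|_{\mathit{W}^p}\le\|\Phi\|_{(\mathit{W}^1)^{(r)}}\|C\|_{(\ell^p(\mathbb{Z}^d))^{(r)}}$, valid for all $p\in[1,\infty]$, and combined with $\mathit{W}^p\subseteq L^p$ this furnishes the upper bound with $M_p=\|\Phi\|_{(\mathit{W}^1)^{(r)}}$ uniformly in $p$.

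For the lower bound I would exploit biorthogonality. The $p=2$ Riesz-basis hypothesis provides a dual generator $\widetilde\Phi$ with $\langle\Phi_k,\widetilde\Phi_l^T\rangle=\delta_{kl}I_r$, built from the Gramian: the block-Laurent operator $\mathbf G=[\langle\Phi_l,\Phi_k^T\rangle]_{k,l}$ is bounded and boundedly invertible on $(\ell^2(\mathbb{Z}^d))^{(r)}$ precisely because \eqref{2.1.1} holds at $p=2$, and $\widetilde\Phi=\sum_k(\mathbf G^{-1})_{0,k}\Phi_k$. Granting that $\widetilde\Phi$ again lies in $(\mathit{W}^1)^{(r)}$, the analysis map $f\mapsto\{\langle f,\widetilde\Phi_k\rangle\}_k$ is bounded from $L^p$ into $(\ell^p(\mathbb{Z}^d))^{(r)}$ (the analysis companion of the Young inequality above); since biorthogonality identifies the coefficients of $f=\sum_k C_k^T\Phi_k$ as $C_k=\langle f,\widetilde\Phi_k\rangle$, this yields $\|C\|_{(\ell^p(\mathbb{Z}^d))^{(r)}}\le\|\widetilde\Phi\|_{(\mathit{W}^1)^{(r)}}\|f\|_{L^p}$, i.e.\ the lower bound with $m_p=\|\widetilde\Phi\|_{(\mathit{W}^1)^{(r)}}^{-1}$, again uniform in $p$.

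The crux, and the step I expect to be the main obstacle, is showing that $\widetilde\Phi$ inherits polynomial decay of order $s$, so that $\widetilde\Phi\in(\mathit{W}^1)^{(r)}$. This is a Wiener's-lemma phenomenon. A standard convolution-of-decay estimate first shows the Gram blocks satisfy $|(\mathbf G)_{k,l}|\le C(1+|k-l|)^{-s}$, so $\mathbf G$ belongs to the Jaffard--Baskakov class $\mathcal J_s$ of matrices with polynomial off-diagonal decay of order $s>d$. The essential input is the inverse-closedness of $\mathcal J_s$: an element invertible on $\ell^2$ has its inverse again in $\mathcal J_s$. Hence $\mathbf G^{-1}\in\mathcal J_s$, its zeroth block-row $\{(\mathbf G^{-1})_{0,k}\}_k$ decays like $(1+|k|)^{-s}$, and a second application of the convolution-of-decay estimate to $\widetilde\phi^i(x)=\sum_k\sum_{j=1}^r b^{ij}_k\,\phi^j(x-k)$, where the entries $b^{ij}_k$ of $(\mathbf G^{-1})_{0,k}$ satisfy $|b^{ij}_k|\le C(1+|k|)^{-s}$, gives $|\widetilde\phi^i(x)|\le C'(1+|x|)^{-s}$. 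This returns $\widetilde\Phi$ to the setting of the upper-bound argument and closes the proof. This is exactly the route assembled in \cite{aagr1}, to which the remark defers.
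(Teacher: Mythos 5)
Your proposal is correct, but note that the paper contains no proof of this remark at all: it is a pure citation, deferring to \cite{aagr1}, so the comparison is with the cited literature rather than with an in-paper argument. Your reconstruction is the standard one and it is sound: the upper bound is exactly the amalgam Young inequality that the paper itself records as \eqref{4.2.4}, and the lower bound via a biorthogonal dual generator $\widetilde\Phi\in(\mathit{W}^1)^{(r)}$, whose coefficient functionals $f\mapsto\langle f,\widetilde\Phi_k\rangle$ are $\ell^p$-bounded uniformly in $p$, is the classical route. The one place where your argument genuinely diverges from the minimal one in \cite{aagr1,aagr2} is the crux step, membership of $\widetilde\Phi$ in $(\mathit{W}^1)^{(r)}$: the reference obtains this from a Wiener-lemma argument on the periodic (matrix-valued) Gramian symbol, which needs only $\Phi\in(\mathit{W}^1)^{(r)}$ together with the $p=2$ Riesz bounds, and produces $\widetilde\Phi=\sum_k b_k^T\Phi_k$ with merely summable coefficients; you instead invoke Jaffard's inverse-closedness of the polynomial off-diagonal decay class, which requires the decay hypothesis \eqref{3.8.3} and so uses the full strength of $\Phi\in\mathcal W_s$ rather than just $\Phi\in(\mathit{W}_0^1)^{(r)}$. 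Under the hypotheses of the remark this is perfectly legitimate, and it buys you more: the conclusion that $\widetilde\Phi$ itself decays at rate $s$ is precisely the content of Remark \ref{obs3.4}, which the paper obtains by citing Lemma 14(a) of \cite{kg1} and then needs in the proof of Proposition \ref{prop4.4}. So your proof trades generality (it would not cover a $\mathit{W}^1$ generator without polynomial decay) for a stronger intermediate statement that the paper uses anyway; both routes close the argument for the statement as given.
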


The following is the main result of  subsection 3.2.

%Theorem3.5
\begin{thm}\label{teo3.5} Let $s>d$, $\Phi\in\mathcal{W}_{s}$,
%for some $\nu_{0}>0$,
and $\overrightarrow{\mu}\in(\mathcal{M}_{s}(\mathbb{R}^d))^{(t)}$. %be given.
Assume that $X$ is a $\overrightarrow{\mu}$-sampling set for $V^2(\Phi)$, and
$\{\Psi_{x_{j}}\}_{j\in J}$ is the $(\overrightarrow{\mu},X)$-sampling frame for $V^2(\Phi)$.
 Then
% there exists a frame $\{\Psi_{x_{j}}\}_{j\in J}$ for $V^2(\Phi)$ such that the following holds:
\begin{itemize}
\item $X$ is a $\overrightarrow{\mu}$-sampling set for $V^p(\Phi)$ for all $p\in[1,\infty]$.
\item If  $\{\widetilde{\Psi}_{x_{j}}\}$ is the dual frame for $\{\Psi_{x_{j}}\}_{j\in J}$, then
%frame $\{\Psi_{x_{j}}\}_{j\in J}$
\begin{equation}\label{3.8.6}
f=\sum_{j\in
J}(f\ast\overrightarrow{\mu})(x_{j})\widetilde{\Psi}_{x_{j}},
\mbox{ for all  } f\in V^p(\Phi),
\end{equation}
where the series converges unconditionally in $V^p(\Phi)$, $p\in[1,\infty)$.
\end{itemize}
\end{thm}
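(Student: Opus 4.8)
The plan is to show that the $(\overrightarrow{\mu},X)$-sampling frame $\{\Psi_{x_{j}}\}_{j\in J}$ furnished by Proposition~\ref{prop4.3} is polynomially $s$-localized, in the sense of Definition~\ref{def3.2}, with respect to the Riesz basis $\{\Phi_{k}\}_{k\in\zd}$ generated by $\Phi$, and then to feed this into the localized-frame machinery of \cite{kg1}. Once localization is established, the abstract theory delivers both conclusions almost mechanically: a frame polynomially localized with respect to a Riesz basis is a Banach frame for the whole chain $V^{p}(\Phi)$, $p\in[1,\infty]$, and its canonical dual is again localized, so that the frame expansion extends to every $V^{p}(\Phi)$ with unconditional convergence for $p<\infty$.

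To verify \eqref{3.8.1}, I would use the defining identity $\la f,\Psi_{x_{j}}\ra=(f\ast\overrightarrow{\mu})(x_{j})$ from Proposition~\ref{prop4.3}, applied to the shifted generators $f=\phi^{i}(\cdot-k)$. This gives $\la g_{k}^{i},\psi_{x_{j}}^{l}\ra=(\phi^{i}\ast\mu^{l})(x_{j}-k)$, so that \eqref{3.8.1} reduces to a decay estimate on the convolutions $\phi^{i}\ast\mu^{l}$. The key elementary inequality is the submultiplicativity of the weight, $1+|y|\le(1+|y-z|)(1+|z|)$, which yields $(1+|y-z|)^{-s}\le(1+|y|)^{-s}(1+|z|)^{s}$. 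Combining this with the decay hypothesis \eqref{3.8.3} on $\phi^{i}$ gives
\[
|(\phi^{i}\ast\mu^{l})(y)|\le C_{0}^{i}\int_{\rd}(1+|y-z|)^{-s}\,d|\mu^{l}|(z)\le C_{0}^{i}\,(1+|y|)^{-s}\int_{\rd}(1+|z|)^{s}\,d|\mu^{l}|(z),
\]
and the last integral is finite precisely because $\overrightarrow{\mu}\in(\mathcal{M}_{s}(\rd))^{(t)}$. Summing over $i,l$ and setting $y=x_{j}-k$ gives \eqref{3.8.1}.

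The harder half is \eqref{3.8.2}, which involves the dual Riesz basis $\{\widetilde{\Phi}_{k}\}$. Writing $\widetilde{\Phi}_{k}=\widetilde{\Phi}(\cdot-k)$ and arguing as above reduces \eqref{3.8.2} to the same convolution estimate applied to the dual generator $\widetilde{\Phi}$; the obstacle is that $\widetilde{\Phi}$ is defined only implicitly, through the inverse of the Gramian, and is not a priori known to decay. This is exactly the point at which I would invoke the inverse-closedness (Wiener-type) results for polynomially localized matrices used in \cite{aagr1}: since $\Phi\in\mathcal{W}_{s}$, the dual generator inherits the decay $|\widetilde{\phi}^{i}(x)|\le C(1+|x|)^{-s}$, after which the computation of the previous paragraph yields \eqref{3.8.2}. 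I expect this decay-transfer step to be the main obstacle, since everything else is either a direct computation or a citation.

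Finally, with $\{\Psi_{x_{j}}\}$ shown to be $s$-localized with respect to $\{\Phi_{k}\}$, I would apply \cite{kg1}. By Observation~\ref{obs1} the analysis operator is isomorphic to the sampling operator $U$, so the Banach-frame property supplies constants with $A_{p}\|C\|_{(\ell^{p}(\zd))^{(r)}}\le\|UC\|_{(\ell^{p}(J))^{(t)}}\le B_{p}\|C\|_{(\ell^{p}(\zd))^{(r)}}$ for the coefficient sequence $C$ of $f$; combining this with the norm equivalence \eqref{2.1.1}, valid for all $p$ by Observation~\ref{obs3.3}, and with the identity $\la f,\Psi_{x_{j}}\ra=(f\ast\overrightarrow{\mu})(x_{j})$, gives precisely \eqref{1.2} for $V^{p}(\Phi)$ and proves the first bullet. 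For the second bullet, the localized-frame theory also guarantees that the canonical dual $\{\widetilde{\Psi}_{x_{j}}\}$ is localized and hence a Banach frame for the whole chain, so the expansion $f=\sum_{j\in J}(f\ast\overrightarrow{\mu})(x_{j})\widetilde{\Psi}_{x_{j}}$ holds and converges unconditionally in $V^{p}(\Phi)$ for $p<\infty$, unconditional convergence failing at $p=\infty$ only because finite sequences are not dense there.
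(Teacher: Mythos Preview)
Your proposal is correct and follows essentially the same route as the paper: you establish $s$-localization of the sampling frame by proving the convolution decay estimate (this is exactly Lemma~\ref{lem4.4} in the paper) and by invoking a Wiener-type inverse-closedness result for the dual generator (the paper packages this as Remark~\ref{obs3.4}, citing Lemma~14(a) of \cite{kg1} rather than \cite{aagr1}), and then you appeal to the localized-frame machinery of \cite{kg1}, which the paper does via Theorem~10(c),(d) there. The only deviation is cosmetic: the paper isolates the localization step as Proposition~\ref{prop4.4} and cites \cite{kg1} for the decay of $\widetilde{\Phi}$, whereas you sketch the argument inline and point to \cite{aagr1}.
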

%For $1\leq p\leq\infty$, there exist $0<A_{p}\leq
%B_{p}<\infty$ such that
%\begin{equation}\label{3.8.6.1}
%A_{p}\|f\|_{L^p}\leq\|(f\ast\overrightarrow{\mu})(X)\|_{(\ell^{p}(J))^{(t)}}\leq
%B_{p}\|f\|_{L^p}, \mbox{ for all  } f\in V^p(\Phi).
%\end{equation}

Next, we combine Theorem \ref{teo3.5} with the perturbation results of the previous section.
The proofs are immediate.

%Theorem3.5.1
\begin{thm}\label{teo3.5.1} Let $s>d$, $\Phi\in\mathcal{W}_{s}$,
%for some $\nu_{0}>0$,
and $\overrightarrow{\mu}\in(\mathcal{M}_{s}(\mathbb{R}^d))^{(t)}$.
Assume that $X$ is a separated $\overrightarrow{\mu}$-sampling set for $V^2(\Phi)$. %, and
%$\{\Psi_{x_{j}}\}_{j\in J}$ is the $(\overrightarrow{\mu},X)$-sampling frame for $V^2(\Phi)$.
%Assume that $X$ is a sampling set for $V^2(\Phi)$ and
%$\overrightarrow{\mu}$, and it is also a separated set.
Then there exists $\epsilon_{0}>0$ such that for every
$\Theta\in\mathcal{W}_{s}$ satisfying
$\|\Phi-\Theta\|_{(\mathit{W}^{1})^{(r)}}<\epsilon_{0}$, there
exists a $(\overrightarrow{\mu},X)$-sampling frame $\{\Psi_{x_{j}}\}_{j\in J}$ for $V^2(\Theta)$. Moreover,
\begin{itemize}
\item $X$ is a $\overrightarrow{\mu}$-sampling set for $V^p(\Theta)$ for all $p\in[1,\infty]$.
\item If  $\{\widetilde{\Psi}_{x_{j}}\}$ is the dual frame for $\{\Psi_{x_{j}}\}_{j\in J}$, then
\begin{displaymath}
f=\sum_{j\in
J}(f\ast\overrightarrow{\mu})(x_{j})\widetilde{\Psi}_{x_{j}},
\mbox{ for all  } f\in V^p(\Theta),
\end{displaymath}
where the series converges unconditionally in $V^p(\Theta)$, $p\in[1,\infty)$.
%\end{displaymath}
\end{itemize}
\end{thm}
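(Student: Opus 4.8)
The plan is to chain Theorem~\ref{teo3.1}, used at the single exponent $p=2$, with Theorem~\ref{teo3.5} applied to the perturbed generator $\Theta$; the statement is then a composition of two results already in hand, which is why the author calls the proof immediate. First I would record that, since $X$ is a separated $\overrightarrow{\mu}$-sampling set for $V^2(\Phi)$, the triple $(X,\Phi,\overrightarrow{\mu})$ is a $2$-stable sampling model in the sense of Definition~\ref{defSampMod}; the hypothesis $\Phi\in\mathcal W_s$ guarantees $\Phi\in(\mathit{W}_{0}^{1})^{(r)}$ and \eqref{2.1.1}, so the model is legitimate. Separation is a property of $X$ alone, hence it survives every perturbation of the generator.

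Next I would invoke Theorem~\ref{teo3.1} with $p=2$: there is an $\epsilon_0>0$ such that whenever $\Theta\in(\mathit{W}_{0}^{1})^{(r)}$ with $\|\Phi-\Theta\|_{(\mathit{W}^{1})^{(r)}}<\epsilon_0$, the model $(X,\Theta,\overrightarrow{\mu})$ is again $2$-stable; equivalently, $X$ is a (separated) $\overrightarrow{\mu}$-sampling set for $V^2(\Theta)$. This $\epsilon_0$ is exactly the constant claimed in the statement. Since $\Theta\in\mathcal W_s\subset(\mathit{W}_{0}^{1})^{(r)}$ by Definition~\ref{sRiesz}, the closeness assumption places us within the scope of Theorem~\ref{teo3.1}, and Proposition~\ref{prop4.3} then furnishes the required $(\overrightarrow{\mu},X)$-sampling frame $\{\Psi_{x_{j}}\}_{j\in J}$ for $V^2(\Theta)$.

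Finally I would apply Theorem~\ref{teo3.5} verbatim, now with $\Theta$ in the role of $\Phi$. Its three hypotheses hold: $s>d$ and $\Theta\in\mathcal W_s$ by assumption, $\overrightarrow{\mu}\in(\mathcal{M}_{s}(\mathbb{R}^d))^{(t)}$ unchanged, and $X$ a $\overrightarrow{\mu}$-sampling set for $V^2(\Theta)$ from the previous step. Theorem~\ref{teo3.5} then yields both conclusions simultaneously: $X$ is a $\overrightarrow{\mu}$-sampling set for $V^p(\Theta)$ for every $p\in[1,\infty]$, and the reconstruction formula \eqref{3.8.6}, read with $\Theta$ in place of $\Phi$, holds with unconditional convergence in $V^p(\Theta)$ for $p\in[1,\infty)$.

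There is essentially no obstacle beyond bookkeeping, but the one point deserving care is that Theorem~\ref{teo3.1} is used \emph{only} at the fixed exponent $p=2$, so $\epsilon_0$ is automatically independent of $p$; it is Theorem~\ref{teo3.5}, and not the perturbation theorem, that performs the bootstrap from the single exponent $p=2$ to the full range $p\in[1,\infty]$. Were one to try to extract the uniform-in-$p$ conclusion directly from the perturbation results one would need a common $\epsilon_0$ valid for all $p$ at once, which those theorems do not by themselves provide; routing the argument through the localized-frame machinery of Theorem~\ref{teo3.5} is precisely what sidesteps this difficulty.
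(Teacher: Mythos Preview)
Your proposal is correct and matches the paper's approach exactly: the authors state that the proofs of Theorems~\ref{teo3.5.1}--\ref{teo3.6.1} ``are immediate'' from combining Theorem~\ref{teo3.5} with the perturbation results of the previous subsection, and you have spelled out precisely that combination (Theorem~\ref{teo3.1} at $p=2$, then Theorem~\ref{teo3.5} applied to $\Theta$). Your closing remark about why $\epsilon_0$ is automatically $p$-independent is a worthwhile clarification that the paper leaves implicit.
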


%Theorem3.5.2
\begin{thm}\label{teo3.5.2}
Let $s>d$, $\Phi\in\mathcal{W}_{s}$,
and $\overrightarrow{\mu}\in(\mathcal{M}_{s}(\mathbb{R}^d))^{(t)}$.
Assume that $X$ is a separated $\overrightarrow{\mu}$-sampling set for $V^2(\Phi)$. %, and
Then there exists $\epsilon_{0}>0$ such that for every
%Let $\Phi\in\mathcal{W}_{s+d+\nu_{0}}$
%for some $\nu_{0}>0$, and
%$\overrightarrow{\mu}\in(\mathcal{M}_{0}(\mathbb{R}^d))^{(t)}$ be
%given. Assume that $X$ is a sampling set for $V^2(\Phi)$ and
%$\overrightarrow{\mu}$, and it is also a separated set. Then there
%exists $\epsilon_{0}>0$ such that for every
$\overrightarrow{\alpha}\in(\mathcal{M}_{s}(\mathbb{R}^d))^{(t)}$
satisfying
$\|\overrightarrow{\mu}-\overrightarrow{\alpha}\|_{(\mathcal{M}(\mathbb{R}^d))^{(t)}}<\epsilon_{0}$,
there exists an $(\overrightarrow{\alpha},X)$-sampling frame $\{\Psi_{x_{j}}\}_{j\in J}$ for $V^2(\Phi)$.
Moreover,
\begin{itemize}
\item $X$ is an $\overrightarrow{\alpha}$-sampling set for $V^p(\Phi)$ for all $p\in[1,\infty]$.
\item If  $\{\widetilde{\Psi}_{x_{j}}\}$ is the dual frame for $\{\Psi_{x_{j}}\}_{j\in J}$, then
\begin{displaymath}
f=\sum_{j\in
J}(f\ast\overrightarrow{\mu})(x_{j})\widetilde{\Psi}_{x_{j}},
\mbox{ for all  } f\in V^p(\Phi),
\end{displaymath}
where the series converges unconditionally in $V^p(\Phi)$, $p\in[1,\infty)$.
\end{itemize}
\end{thm}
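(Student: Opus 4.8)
The plan is to obtain \thmref{teo3.5.2} as a direct composition of \thmref{teo3.3} and \thmref{teo3.5}, exactly as the surrounding text promises. The perturbation here only affects the vector of measures, leaving the generator $\Phi$ and the localization class $\mathcal{W}_s$ untouched, so the strategy is to first re-establish $2$-stability for the perturbed model and then feed that model into the $V^2\to V^p$ upgrade of \thmref{teo3.5}. First I would apply \thmref{teo3.3} with $p=2$ to the given $2$-stable model $(X,\Phi,\overrightarrow{\mu})$. This supplies an $\epsilon_0>0$ such that whenever $\overrightarrow{\alpha}\in(\mathcal{M}(\mathbb{R}^d))^{(t)}$ satisfies $\|\overrightarrow{\mu}-\overrightarrow{\alpha}\|_{(\mathcal{M}(\mathbb{R}^d))^{(t)}}<\epsilon_0$, the model $(X,\Phi,\overrightarrow{\alpha})$ is again $2$-stable; equivalently, $X$ is an $\overrightarrow{\alpha}$-sampling set for $V^2(\Phi)$. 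I retain this same $\epsilon_0$ (shrinking it if convenient) for the statement of the present theorem.

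Next I would verify that the perturbed triple $(X,\Phi,\overrightarrow{\alpha})$ meets every hypothesis of \thmref{teo3.5}. The conditions $s>d$ and $\Phi\in\mathcal{W}_s$ are inherited unchanged, the membership $\overrightarrow{\alpha}\in(\mathcal{M}_s(\mathbb{R}^d))^{(t)}$ is imposed directly in the hypotheses, and the $2$-stability obtained in the previous step is precisely the requirement that $X$ be an $\overrightarrow{\alpha}$-sampling set for $V^2(\Phi)$. Proposition~\ref{prop4.3} then produces the $(\overrightarrow{\alpha},X)$-sampling frame $\{\Psi_{x_{j}}\}_{j\in J}$ for $V^2(\Phi)$, and \thmref{teo3.5} applied to $(X,\Phi,\overrightarrow{\alpha})$ delivers both bullet points at once: $X$ is an $\overrightarrow{\alpha}$-sampling set for $V^p(\Phi)$ for every $p\in[1,\infty]$, and the reconstruction series converges unconditionally to $f$ in $V^p(\Phi)$ for $p\in[1,\infty)$ via the dual frame $\{\widetilde{\Psi}_{x_{j}}\}$.

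There is no genuine obstacle here; the only thing to check — and the reason the restricted class $(\mathcal{M}_s(\mathbb{R}^d))^{(t)}$ appears rather than the full $(\mathcal{M}(\mathbb{R}^d))^{(t)}$ of \thmref{teo3.3} — is that the measure perturbation must not disturb any of the localization hypotheses required by \thmref{teo3.5}. Since those hypotheses constrain only $\Phi$ (through $\mathcal{W}_s$) and the decay of the sampling measures (through $\mathcal{M}_s$), both of which remain in force for $\overrightarrow{\alpha}$ by assumption, the composition goes through verbatim and the proof is immediate.
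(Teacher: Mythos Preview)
Your approach is correct and coincides with the paper's: the authors state explicitly that the proofs of Theorems~\ref{teo3.5.1}--\ref{teo3.6.1} are immediate, obtained by combining Theorem~\ref{teo3.5} with the relevant perturbation result from Section~3.1, which in this case is Theorem~\ref{teo3.3}. One minor point: the displayed reconstruction formula in the statement should read $(f\ast\overrightarrow{\alpha})(x_{j})$ rather than $(f\ast\overrightarrow{\mu})(x_{j})$ (compare the analogous formula in Theorem~\ref{teo3.6}); your argument, like the paper's, in fact establishes the corrected version.
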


%Theorem3.5.3
\begin{thm}\label{teo3.5.3}
Let $s>d$, $\Phi\in\mathcal{W}_{s}$,
and $\overrightarrow{\mu}\in(\mathcal{M}_{s}(\mathbb{R}^d))^{(t)}$.
Assume that $X$ is a separated $\overrightarrow{\mu}$-sampling set for $V^2(\Phi)$. %, and
Then there exists $\epsilon_{0}>0$ such that for every
$\Delta = \{\delta_j,\ j\in J\}$ satisfying $\norm{\Delta}_\infty < \epsilon_{0}$
%Let $\Phi\in\mathcal{W}_{s+d+\nu_{0}}$
%for some $\nu_{0}>0$, and
%$\overrightarrow{\mu}\in(\mathcal{M}_{0}(\mathbb{R}^d))^{(t)}$ be
%given. Assume that $X$ is a sampling set for $V^2(\Phi)$ and
%$\overrightarrow{\mu}$, and it is also a separated set. Then there
%exists $\epsilon_{0}>0$ such that for every
%$\overrightarrow{\alpha}\in(\mathcal{M}_{s}(\mathbb{R}^d))^{(t)}$
%satisfying
%$\|\overrightarrow{\mu}-\overrightarrow{\alpha}\|_{(\mathcal{M}(\mathbb{R}^d))^{(t)}}<\epsilon_{0}$,
there exists a $(\overrightarrow{\mu},X+\Delta)$-sampling frame $\{\Psi_{x_{j}}\}_{j\in J}$ for $V^2(\Phi)$.
Moreover,
\begin{itemize}
\item $X+\Delta$ is a $\overrightarrow{\mu}$-sampling set for $V^p(\Phi)$ for all $p\in[1,\infty]$.
\item If  $\{\widetilde{\Psi}_{x_{j}}\}$ is the dual frame for $\{\Psi_{x_{j}}\}_{j\in J}$, then
\begin{displaymath}
f=\sum_{j\in
J}(f\ast\overrightarrow{\mu})(x_{j}+\delta_j)\widetilde{\Psi}_{x_{j}},
\mbox{ for all  } f\in V^p(\Phi),
\end{displaymath}
where the series converges unconditionally in $V^p(\Phi)$, $p\in[1,\infty)$.
\end{itemize}
\end{thm}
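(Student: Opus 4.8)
The plan is to deduce this theorem by combining the jitter perturbation result, Theorem~\ref{teo5.1}, with the localization-based extension result, Theorem~\ref{teo3.5}. The key observation is that neither the generator hypothesis $\Phi\in\mathcal{W}_s$ nor the measure hypothesis $\overrightarrow{\mu}\in(\mathcal{M}_s(\mathbb{R}^d))^{(t)}$ involves the sampling set; consequently these two standing assumptions of Theorem~\ref{teo3.5} are undisturbed by any perturbation $X\mapsto X+\Delta$. Thus the only facts I must secure for the perturbed set $X+\Delta$ are that it remains \emph{separated} and that it remains a $\overrightarrow{\mu}$-sampling set for $V^2(\Phi)$; once these hold, Theorem~\ref{teo3.5} applies verbatim with $X+\Delta$ in the role of $X$ and delivers all three conclusions at once.

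First I would fix $\epsilon_0 = \min\{\epsilon_0',\,\delta/2\}$, where $\epsilon_0'$ is the threshold produced by Theorem~\ref{teo5.1} applied with $p=2$ (guaranteeing that $(X+\Delta,\Phi,\overrightarrow{\mu})$ is $2$-stable whenever $\norm{\Delta}_\infty<\epsilon_0'$), and $\delta$ is the separation constant of $X$. The bound $\epsilon_0\le\delta/2$ preserves separation: for $i\ne j$,
\[
|(x_i+\delta_i)-(x_j+\delta_j)|\geq|x_i-x_j|-|\delta_i|-|\delta_j|\geq\delta-2\norm{\Delta}_\infty>0,
\]
so $X+\Delta$ is separated with constant at least $\delta-2\norm{\Delta}_\infty$. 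Hence, for any $\Delta$ with $\norm{\Delta}_\infty<\epsilon_0$, the set $X+\Delta$ is simultaneously separated (the first bound) and a $\overrightarrow{\mu}$-sampling set for $V^2(\Phi)$ (the second bound, via Theorem~\ref{teo5.1}).

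With these two properties in hand I would simply invoke Theorem~\ref{teo3.5} for the sampling model $(X+\Delta,\Phi,\overrightarrow{\mu})$: since $s>d$, $\Phi\in\mathcal{W}_s$, and $\overrightarrow{\mu}\in(\mathcal{M}_s(\mathbb{R}^d))^{(t)}$ are unchanged, every hypothesis is met. Its conclusion yields the $(\overrightarrow{\mu},X+\Delta)$-sampling frame $\{\Psi_{x_j}\}_{j\in J}$ for $V^2(\Phi)$, the statement that $X+\Delta$ is a $\overrightarrow{\mu}$-sampling set for $V^p(\Phi)$ for every $p\in[1,\infty]$, and the unconditionally convergent reconstruction identity in $V^p(\Phi)$. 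Finally, by Proposition~\ref{prop4.3} applied to $X+\Delta$, the frame coefficients are exactly $\langle f,\Psi_{x_j}\rangle=(f\ast\overrightarrow{\mu})(x_j+\delta_j)$, which is precisely why the reconstruction formula is read off at the perturbed locations $x_j+\delta_j$. The real mathematical content lives entirely in Theorems~\ref{teo5.1} and~\ref{teo3.5}, so the argument is essentially bookkeeping; the only mild subtlety—and the closest thing to an obstacle—is ensuring separation survives the jitter (handled by the $\delta/2$ bound) and keeping the frame indexed so that its coefficients correspond to the shifted samples.
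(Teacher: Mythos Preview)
Your proposal is correct and matches the paper's approach: the paper states explicitly that Theorems~\ref{teo3.5.1}--\ref{teo3.6.1} are obtained by combining Theorem~\ref{teo3.5} with the perturbation results of Section~3.1 and that ``the proofs are immediate.'' You have simply spelled out this immediate combination, and your extra care in bounding $\epsilon_0\le\delta/2$ to keep $X+\Delta$ separated is a detail the paper leaves implicit.
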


%Theorem3.6
\begin{thm}\label{teo3.6}
Let $s>d$, $\Phi\in\mathcal{W}_{s}$,
and $\overrightarrow{\mu}\in(\mathcal{M}_{s}(\mathbb{R}^d))^{(t)}$.
Assume that $X$ is a separated $\overrightarrow{\mu}$-sampling set for $V^2(\Phi)$. %, and
Then there exists $\epsilon_{0}>0$ such that for every
$\Theta\in\mathcal{W}_{s}$ and
$\overrightarrow{\alpha}\in(\mathcal{M}_{s}(\mathbb{R}^d))^{(t)}$
satisfying
$\|\Phi-\Theta\|_{(\mathit{W}^{1})^{(r)}}+\|\overrightarrow{\mu}-\overrightarrow{\alpha}\|_{(\mathcal{M}(\mathbb{R}^d))^{(t)}}<\epsilon_{0}$,
there exists an $(\overrightarrow{\alpha},X)$-sampling frame $\{\Psi_{x_{j}}\}_{j\in J}$ for $V^2(\Theta)$. Moreover,
%such that the following holds:
\begin{itemize}
\item $X$ is an $\overrightarrow{\alpha}$-sampling set for $V^p(\Theta)$ for all $p\in[1,\infty]$.
\item If  $\{\widetilde{\Psi}_{x_{j}}\}$ is the dual frame for $\{\Psi_{x_{j}}\}_{j\in J}$, then
\begin{displaymath}
f=\sum_{j\in
J}(f\ast\overrightarrow{\alpha})(x_{j})\widetilde{\Psi}_{x_{j}},
\mbox{ for all  } f\in V^p(\Theta),
\end{displaymath}
where the series converges unconditionally in $V^p(\Theta)$, $p\in[1,\infty)$.
\end{itemize}
\end{thm}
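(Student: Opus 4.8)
The plan is to derive this statement by chaining together the combined perturbation result, Theorem \ref{teo3.4}, with the localization result, Theorem \ref{teo3.5}; as the surrounding text indicates, once those two are in hand the argument is essentially immediate. The key observation is that Theorem \ref{teo3.5} does not require the underlying model to be unperturbed---it only requires the generator to lie in $\mathcal{W}_s$, the measure vector to lie in $(\mathcal{M}_s(\mathbb{R}^d))^{(t)}$, and the set $X$ to be a separated sampling set for the relevant $V^2$ space. So the whole task reduces to producing that $V^2$-stability for the perturbed data.

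First I would record that, by Definition \ref{defSampMod}, the hypothesis that $X$ is a $\overrightarrow{\mu}$-sampling set for $V^2(\Phi)$ is precisely the statement that the model $(X,\Phi,\overrightarrow{\mu})$ is $2$-stable. Applying Theorem \ref{teo3.4} with $p=2$ then produces a threshold $\epsilon_0>0$, depending only on the unperturbed model, such that the perturbed model $(X,\Theta,\overrightarrow{\alpha})$ is again $2$-stable whenever $\|\Phi-\Theta\|_{(\mathit{W}^{1})^{(r)}}+\|\overrightarrow{\mu}-\overrightarrow{\alpha}\|_{(\mathcal{M}(\mathbb{R}^d))^{(t)}}<\epsilon_0$. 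Equivalently, for every such pair $(\Theta,\overrightarrow{\alpha})$ the set $X$ is an $\overrightarrow{\alpha}$-sampling set for $V^2(\Theta)$.

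Second, I would fix one such pair $(\Theta,\overrightarrow{\alpha})$ and verify that the triple $(X,\Theta,\overrightarrow{\alpha})$ satisfies all the hypotheses of Theorem \ref{teo3.5}: by assumption $\Theta\in\mathcal{W}_s$ and $\overrightarrow{\alpha}\in(\mathcal{M}_s(\mathbb{R}^d))^{(t)}$ with $s>d$, and by the previous step $X$ is a separated $\overrightarrow{\alpha}$-sampling set for $V^2(\Theta)$. Theorem \ref{teo3.5} (together with Proposition \ref{prop4.3}, which constructs the sampling frame from $2$-stability) then yields directly the existence of the $(\overrightarrow{\alpha},X)$-sampling frame $\{\Psi_{x_{j}}\}_{j\in J}$ for $V^2(\Theta)$, the fact that $X$ is an $\overrightarrow{\alpha}$-sampling set for $V^p(\Theta)$ for every $p\in[1,\infty]$, and the reconstruction identity $f=\sum_{j\in J}(f\ast\overrightarrow{\alpha})(x_{j})\widetilde{\Psi}_{x_{j}}$ with unconditional convergence in $V^p(\Theta)$ for $p\in[1,\infty)$. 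This is exactly the asserted conclusion.

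The only point that needs a moment's care, rather than any genuine difficulty, is the compatibility of the two thresholds. Theorem \ref{teo3.4} already furnishes a single $\epsilon_0$ governing the joint smallness of $\|\Phi-\Theta\|_{(\mathit{W}^{1})^{(r)}}$ and $\|\overrightarrow{\mu}-\overrightarrow{\alpha}\|_{(\mathcal{M}(\mathbb{R}^d))^{(t)}}$, and because Theorem \ref{teo3.5} imposes no further smallness constraint---only the qualitative membership conditions $\Theta\in\mathcal{W}_s$ and $\overrightarrow{\alpha}\in(\mathcal{M}_s(\mathbb{R}^d))^{(t)}$, which are themselves hypotheses of the present theorem---this same $\epsilon_0$ suffices for the entire conclusion. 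Hence no refinement of $\epsilon_0$ is required, and the two results compose without loss.
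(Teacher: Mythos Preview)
Your proposal is correct and matches the paper's own approach: the paper introduces Theorems \ref{teo3.5.1}--\ref{teo3.6.1} as direct combinations of Theorem \ref{teo3.5} with the perturbation results of Section 3.1 and states that ``the proofs are immediate,'' which for Theorem \ref{teo3.6} means exactly the two-step argument you give (apply Theorem \ref{teo3.4} with $p=2$, then Theorem \ref{teo3.5} to the perturbed model).
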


%Theorem3.6.1
\begin{thm}\label{teo3.6.1}
Let $s>d$, $\Phi\in\mathcal{W}_{s}$,
and $\overrightarrow{\mu}\in(\mathcal{M}_{s}(\mathbb{R}^d))^{(t)}$.
Assume that $X$ is a separated $\overrightarrow{\mu}$-sampling set for $V^2(\Phi)$. %, and
Then there exists $\epsilon_{0}>0$ such that for every
$\Delta = \{\delta_j,\ j\in J\}$,
$\Theta\in\mathcal{W}_{s}$, and
$\overrightarrow{\alpha}\in(\mathcal{M}_{s}(\mathbb{R}^d))^{(t)}$
satisfying
$\norm{\Delta}_\infty+\|\Phi-\Theta\|_{(\mathit{W}^{1})^{(r)}}+\|\overrightarrow{\mu}-\overrightarrow{\alpha}\|_{(\mathcal{M}(\mathbb{R}^d))^{(t)}}<\epsilon_{0}$,
there exists an $(\overrightarrow{\alpha},X+\Delta)$-sampling frame $\{\Psi_{x_{j}}\}_{j\in J}$ for $V^2(\Theta)$. Moreover,
%such that the following holds:
\begin{itemize}
\item $X+\Delta$ is an $\overrightarrow{\alpha}$-sampling set for $V^p(\Theta)$ for all $p\in[1,\infty]$.
\item If  $\{\widetilde{\Psi}_{x_{j}}\}$ is the dual frame for $\{\Psi_{x_{j}}\}_{j\in J}$, then
\begin{displaymath}
f=\sum_{j\in
J}(f\ast\overrightarrow{\alpha})(x_{j}+\delta_j)\widetilde{\Psi}_{x_{j}},
\mbox{ for all  } f\in V^p(\Theta),
\end{displaymath}
where the series converges unconditionally in $V^p(\Theta)$, $p\in[1,\infty)$.
\end{itemize}
\end{thm}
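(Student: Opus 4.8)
The plan is to obtain \thmref{teo3.6.1} as an immediate assembly of two earlier results: the combined perturbation theorem \thmref{teo5.111}, used only to secure $2$-stability of the perturbed model, and the localization theorem \thmref{teo3.5}, used to upgrade that $2$-stability to $p$-stability for all $p$ and to produce the Banach-frame reconstruction. The point is that once the perturbed triple $(X+\Delta,\Theta,\overrightarrow{\alpha})$ is known to be a genuine $2$-stable sampling model whose generator lies in $\mathcal{W}_s$ and whose measure lies in $(\mathcal{M}_s(\RR^d))^{(t)}$, every assertion in the theorem is literally the conclusion of \thmref{teo3.5} applied to that triple.

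First I would fix the data of the unperturbed model. Let $\delta>0$ be the separation constant of $X$, and let $\epsilon_0^{(1)}$ be the threshold furnished by \thmref{teo5.111} for $p=2$, so that the model $(X+\Delta,\Theta,\overrightarrow{\alpha})$ is $2$-stable whenever the total perturbation is below $\epsilon_0^{(1)}$. I would then set $\epsilon_0=\min\{\delta/2,\epsilon_0^{(1)}\}$. For any admissible perturbation of total size less than $\epsilon_0$, I would first check that $X+\Delta$ is again separated: for $i\neq j$,
\[
|(x_i+\delta_i)-(x_j+\delta_j)|\geq |x_i-x_j|-2\norm{\Delta}_\infty\geq \delta-2\epsilon_0>0,
\]
so $X+\Delta$ has separation constant at least $\delta-2\epsilon_0$. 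By the choice of $\epsilon_0$ and \thmref{teo5.111}, the triple $(X+\Delta,\Theta,\overrightarrow{\alpha})$ is then $2$-stable, that is, $X+\Delta$ is an $\overrightarrow{\alpha}$-sampling set for $V^2(\Theta)$.

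It remains to invoke \thmref{teo3.5} for the model $(X+\Delta,\Theta,\overrightarrow{\alpha})$, whose hypotheses are now all in place: $s>d$ is fixed, the perturbed generator satisfies $\Theta\in\mathcal{W}_s$ and the perturbed measure satisfies $\overrightarrow{\alpha}\in(\mathcal{M}_s(\RR^d))^{(t)}$ by assumption, and $X+\Delta$ is a separated $\overrightarrow{\alpha}$-sampling set for $V^2(\Theta)$ by the previous paragraph. Let $\{\Psi_{x_j}\}_{j\in J}$ be the associated $(\overrightarrow{\alpha},X+\Delta)$-sampling frame supplied by Proposition~\ref{prop4.3}, so that $\langle f,\Psi_{x_j}\rangle=(f\ast\overrightarrow{\alpha})(x_j+\delta_j)$. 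Then \thmref{teo3.5} asserts exactly that $X+\Delta$ is an $\overrightarrow{\alpha}$-sampling set for $V^p(\Theta)$ for every $p\in[1,\infty]$, and that each $f\in V^p(\Theta)$ is recovered by the unconditionally convergent series $f=\sum_{j\in J}(f\ast\overrightarrow{\alpha})(x_j+\delta_j)\widetilde{\Psi}_{x_j}$ for $p\in[1,\infty)$, which is the displayed reconstruction formula.

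I expect essentially no obstacle here: the only nonformal point is the verification that $X+\Delta$ stays separated, which the triangle-inequality estimate settles as soon as $\epsilon_0\le\delta/2$. All the genuine analytic content — the transfer of stability through the operator-norm perturbation, and the passage from $p=2$ to all $p$ via polynomial localization and the theory of Banach frames — has already been done in \thmref{teo5.111} and \thmref{teo3.5}, so the present theorem follows by combining the two. One subtlety worth flagging in the write-up is that the restrictions $\Theta\in\mathcal{W}_s$ and $\overrightarrow{\alpha}\in(\mathcal{M}_s(\RR^d))^{(t)}$ are not something to be \emph{preserved} under perturbation; they are hypotheses imposed directly on the admissible perturbed data, and they are used only when feeding the perturbed model into \thmref{teo3.5}.
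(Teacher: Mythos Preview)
Your proposal is correct and follows essentially the same approach as the paper, which simply declares that Theorems~\ref{teo3.5.1}--\ref{teo3.6.1} have ``immediate'' proofs obtained by combining \thmref{teo3.5} with the perturbation results of Section~3.1; for \thmref{teo3.6.1} that means exactly the pairing of \thmref{teo5.111} (to get $2$-stability of the perturbed triple) with \thmref{teo3.5} (to upgrade to all $p$ and obtain the reconstruction). Your added verification that $X+\Delta$ remains separated is a detail the paper leaves implicit.
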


\begin{obs}
The crucial result for the proof of the theorems in this section is
Jaffard's non-commutative extension of the classical Wiener's Tauberian Lemma (see Theorem 5 in \cite{kg1}).
It states that if an invertible matrix has an off-diagonal decay defined by inequalities similar to
\eqref{3.8.1} and \eqref{3.8.2}, then the inverse matrix has the same off-diagonal decay. There exist
other extensions of Wiener's Lemma which deal with different types of off-diagonal decay (see, for example,
\cite{bas,kgml}). Many of those could be used to obtain results similar to, say, Theorem \ref{teo3.6.1}.
\end{obs}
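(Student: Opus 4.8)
Strictly speaking this is a \emph{remark} rather than a theorem, so it carries no proof obligation: it records a bibliographic and methodological observation, and the honest reading is that nothing here is to be proved. Were one asked to substantiate the two assertions it makes, however, the plan would be as follows.

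To justify that Jaffard's non-commutative Wiener lemma is the \emph{crucial} ingredient, I would trace the single point in the proof of Theorem~\ref{teo3.5} --- on which all of Theorems~\ref{teo3.5.1}--\ref{teo3.6.1} rest via the perturbation results of \S3.1 --- at which localization is actually used. By Remark~\ref{obs1} the frame operator factors as $S = T^*T$ with $T$ isomorphic to the sampling operator $U$, and the hypotheses $\Phi\in\mathcal{W}_s$ and $\overrightarrow{\mu}\in(\mathcal{M}_s(\mathbb{R}^d))^{(t)}$ force the matrix of $S$ relative to the Riesz basis $\{G_k\}$ to obey an off-diagonal decay of the form \eqref{3.8.1} and \eqref{3.8.2}. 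The passage from $V^2$ to $V^p$ for all $p$, and the localization of the dual frame $\{\widetilde{\Psi}_{x_j}\}$ needed for the convergence of \eqref{3.8.6}, both hinge on $S^{-1}$ inheriting that same decay --- which is exactly what Jaffard's lemma supplies. Since no other step requires inversion inside a localized class, the lemma is indeed the one indispensable analytic input.

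To justify that other Wiener-type lemmas could be substituted, I would emphasize that the argument invokes Jaffard's result only as a black box of the shape ``the class of matrices cut out by a prescribed off-diagonal decay is inverse-closed.'' Consequently, replacing the polynomial profile of Definition~\ref{def3.2} by any of the alternative decay classes treated in \cite{bas,kgml}, and correspondingly strengthening the decay hypotheses on $\Phi$ and $\overrightarrow{\mu}$, one could rerun the identical argument with the inverse-closedness theorem for that class in place of \cite{kg1}, obtaining conclusions of precisely the form of Theorem~\ref{teo3.6.1}. The only place where genuine work would reside --- and the step I would expect to be the main obstacle in any explicit version --- is verifying that the frame operator really does satisfy the off-diagonal decay demanded by the alternative lemma under the adjusted assumptions, which amounts to redoing the kernel estimates of Section~4 with the new decay profile.
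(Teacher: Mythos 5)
Your reading is correct: this is a remark, not a theorem, and the paper supplies no proof for it, so there is nothing to verify beyond the accuracy of your commentary. Your sketch matches the paper's actual use of Jaffard's lemma --- localization of the sampling frame (Proposition~\ref{prop4.4}) feeding into Gr\"ochenig's Theorem~10 of \cite{kg1} in the proof of Theorem~\ref{teo3.5}, and the direct invocation of Theorem~5 of \cite{kg1} for the inverse-closedness of $(U^{*}U)^{-1}$ in the proof of Theorem~\ref{lastcombine} --- so the proposal is accurate and consistent with the paper.
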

%++++++++++++++++++++++++++++++++++++++++++++++++++++++++++++++++++++++
\subsection{Imperfect reconstruction.}\label{jitter}\

In practice, we know that a perturbation exists because of imperfections of measuring devices, errors, etc. However,
we can only estimate this perturbation and may not even know its nature. Here we show that even if we use a
model $(X,\Phi,\overrightarrow{\mu})$ for reconstructing a signal from a perturbed model $(\widetilde{X},\Theta,\overrightarrow{\alpha})$ (or vice versa), the
reconstruction error depends continuously on the perturbation in the cases studied above.  %The proofs

As before, let $U$ be the sampling operator for a $p$-stable sampling model $(X,\Phi,\overrightarrow{\mu})$ and
$U_{\Delta}$ be the sampling operator for a perturbed model $(\widetilde{X},\Theta,\overrightarrow{\alpha})$,
where $\widetilde{X} = X+\Delta = \{x_j +\delta_j\}_{j\in J}$.
%where $\Delta=\{\delta_{j}\}_{j\in J}\subset\mathbb{R}^d$.
The sampling operator $U_{\Delta}$ can be thought of as a $t\times r$ matrix of
operators given by
\begin{displaymath}
U_{\Delta}
=\left(\begin{array}{ccc} U_{\Delta}^{1,1} &
\ldots&U_{\Delta}^{r,1}\\
\vdots&&\vdots\\
U_{\Delta}^{1,t}&\ldots&U_{\Delta}^{r,t}
\end{array}\right),
\end{displaymath}
where for each $1\leq i\leq r$ and $1\leq l\leq t$ the operator
$U_{\Delta}^{i,l}$ is defined by a bi-infinite matrix with
%$j$, $k$
entries $(U_{\Delta}^{i,l})_{j,k}=(\theta^{i}\ast\alpha^{l})(x_{j}+\delta_{j}-k)$, $j\in J$,
$k\in\mathbb{Z}^d$.

%\begin{obs}\label{obs3.1}
We let $U^{*}$ be an operator defined by the following $r\times t$ matrix of
operators from $(\ell^p(J))^{(t)}$ into $(\ell^p(\mathbb{Z}^d))^{(r)}$:
%given by
\begin{displaymath}
U^{*}=\left(\begin{array}{ccc} \overline{U^{1,1}}&
\ldots&\overline{U^{1,t}}\\
\vdots&&\vdots\\
\overline{U^{r,1}}&\ldots&\overline{U^{r,t}}
\end{array}\right),
\end{displaymath}
where for each $1\leq i\leq r$ and $1\leq l\leq t$, the operator
$\overline{U^{i,l}}$ is defined by a bi-infinite matrix with entries
%$j,k$ entry is
$(\overline{U^{i,l}})_{j,k}=\overline{(\phi^{i}\ast\mu^{l})(x_{j}-k)}$,
where $\overline{z}$ denotes the conjugate of the complex number
$z$. The operator $(U_{\Delta})^{*}$ is defined similarly.
%  the matrix of operators
%associated to $U_{\Delta}$. Also
Notice that this definition implies $\|U^{*}\|_{p,op}=\|U\|_{p,op}$, and $(U^{*})^{*}=U$.
Moreover, if $U$ satisfies (\ref{5.1}), then $U^*$ satisfies
\begin{equation}\label{5.1.1}
\eta_{p}\|D\|_{(\ell^p(J))^{(t)}}\leq\|U^*D\|_{(\ell^p(\mathbb{Z}^d))^{(r)}}\leq\beta_{p}\|D\|_{(\ell^p(J))^{(t)}},
\end{equation}
for all $D\in (\ell^p(J))^{(t)}$.
Observe also that if $p =2$ then $U^*$ is, indeed, the Hilbert adjoint of $U$. Hence,
if the sampling model  $(X,\Phi,\overrightarrow{\mu})$ is $2$-stable, $U^* U$ is isomorphic to the
frame operator $S$ for the sampling frame $\{\Psi_{x_j}\}$, see Remark \ref{obs1}.
Therefore, $U^* U$ is invertible and positive. Moreover, the operator $(U^*U)^{-1}U$
is a left inverse for the sampling operator $U$ and it is isomorphic to the synthesis operator
used for the reconstruction. Hence, the importance of the following result.
%\end{obs}

%Theorem 5.2
\begin{thm}\label{teo5.2} Let $(X,\Phi,\overrightarrow{\mu})$ be a $p$-stable sampling model for some $p\in[1,\infty]$.
Assume that its sampling operator $U$ satisfies (\ref{5.1}) and the operator $U^* U$ is invertible.
Let $\epsilon\in(0,-\beta_p+\sqrt{\beta_p^{2}+\eta_p^{2}})$ %, where
%$0<\eta\leq\beta<\infty$ are constants satisfying (\ref{5.1}) when $p=2$.
%Assume there exists a number $\gamma_{0}>0$ such that
and $(\widetilde{X},\Theta,\overrightarrow{\alpha})$ be a perturbed sampling model such that its sampling operator
$U_\Delta$ satisfies $\|U-U_{\Delta}\|<\epsilon$. %, whenever $\|\Delta\|_{\infty} \leq \gamma_{0}$, and
Define $\nu=\nu(\epsilon)=\eta_p^{-2}\epsilon(\epsilon+2\beta_p)$. Then
$0<\nu<1$, the operator $U_{\Delta}^{*}U_{\Delta}$ is invertible, and
\begin{displaymath}
\|(U^{*}U)^{-1}U^{*}-(U_{\Delta}^{*}U_{\Delta})^{-1}U_{\Delta}^{*}\|<\frac{1}{\eta_p^{2}}\left(\epsilon+\frac{\nu(\beta_p+\epsilon)}{1-\nu}\right).%,\textrm{whenever}\quad\|\Delta\|_{\infty}\leq \gamma_{0}.
\end{displaymath}
\end{thm}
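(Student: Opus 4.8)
The plan is to treat this as a perturbation-of-the-inverse estimate for the two positive operators $A := U^*U$ and $A_\Delta := U_\Delta^*U_\Delta$, whose inverses feed into the synthesis (left-inverse) operators $(U^*U)^{-1}U^*$ and $(U_\Delta^*U_\Delta)^{-1}U_\Delta^*$. First I would record the elementary norm bounds. The upper estimates in \eqref{5.1} and \eqref{5.1.1} give $\norm{U}\le\beta_p$ and $\norm{U^*}\le\beta_p$; since the involution $V\mapsto V^*$ leaves the operator norm unchanged (for $p=2$ it is the Hilbert adjoint, and in general it merely conjugates and transposes the block matrix, cf. the identity $\norm{U^*}_{p,op}=\norm{U}_{p,op}$), the hypothesis $\norm{U-U_\Delta}<\epsilon$ also yields $\norm{U^*-U_\Delta^*}<\epsilon$, whence $\norm{U_\Delta},\norm{U_\Delta^*}<\beta_p+\epsilon$. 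The lower estimates in \eqref{5.1} and \eqref{5.1.1} combine into $\norm{U^*UC}\ge\eta_p\norm{UC}\ge\eta_p^2\norm{C}$, so that invertibility of $A$ (assumed) forces $\norm{A^{-1}}\le\eta_p^{-2}$. The assertion $0<\nu<1$ is then immediate: $\nu<1$ is equivalent to $\epsilon^2+2\beta_p\epsilon-\eta_p^2<0$, i.e. to $\epsilon<-\beta_p+\sqrt{\beta_p^2+\eta_p^2}$, which is precisely the admissible range of $\epsilon$.

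Next I would prove the central perturbation estimate. Writing $A-A_\Delta=U^*(U-U_\Delta)+(U^*-U_\Delta^*)U_\Delta$ and inserting the bounds above gives $\norm{A-A_\Delta}<\beta_p\epsilon+\epsilon(\beta_p+\epsilon)=\epsilon(\epsilon+2\beta_p)=\eta_p^2\nu$; this is exactly where the definition of $\nu$ is engineered to appear. Factoring $A_\Delta=A\bigl(I-A^{-1}(A-A_\Delta)\bigr)$ and noting $\norm{A^{-1}(A-A_\Delta)}\le\eta_p^{-2}\cdot\eta_p^2\nu=\nu<1$, the Neumann series shows $I-A^{-1}(A-A_\Delta)$ is boundedly invertible. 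Hence $A_\Delta=U_\Delta^*U_\Delta$ is invertible, and $\norm{A_\Delta^{-1}}\le\norm{A^{-1}}/(1-\nu)\le\eta_p^{-2}(1-\nu)^{-1}$.

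Finally, for the reconstruction-error bound I would split $(U^*U)^{-1}U^*-(U_\Delta^*U_\Delta)^{-1}U_\Delta^*=A^{-1}(U^*-U_\Delta^*)+(A^{-1}-A_\Delta^{-1})U_\Delta^*$. The first summand is bounded by $\norm{A^{-1}}\,\norm{U^*-U_\Delta^*}<\epsilon/\eta_p^2$. For the second I would use the resolvent identity $A^{-1}-A_\Delta^{-1}=A^{-1}(A_\Delta-A)A_\Delta^{-1}$ to get $\norm{A^{-1}-A_\Delta^{-1}}\le\eta_p^{-2}\cdot\eta_p^2\nu\cdot\eta_p^{-2}(1-\nu)^{-1}=\nu\,\eta_p^{-2}(1-\nu)^{-1}$, and then multiply by $\norm{U_\Delta^*}<\beta_p+\epsilon$. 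Adding the two summands collapses to $\tfrac{1}{\eta_p^2}\bigl(\epsilon+\tfrac{\nu(\beta_p+\epsilon)}{1-\nu}\bigr)$, which is the asserted bound.

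The hard part will not be any single estimate, since each is a routine Neumann-series or resolvent-identity manipulation, but rather the constant bookkeeping needed to make the telescoping produce exactly $\eta_p^2\nu$ and the final sum collapse to the stated closed form. The one genuinely non-algebraic input is the honest invertibility of $U_\Delta^*U_\Delta$ (as opposed to its being merely bounded below), which I obtain from the Neumann-series perturbation of the invertible $U^*U$; and the one point demanding care for $p\ne 2$ is the claim that $V\mapsto V^*$ preserves the operator norm, so that $\norm{U^*-U_\Delta^*}<\epsilon$.
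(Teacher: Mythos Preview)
Your proposal is correct and follows essentially the same route as the paper: the paper also splits $U^*U-U_\Delta^*U_\Delta=U^*(U-U_\Delta)+(U^*-U_\Delta^*)U_\Delta$ to get the bound $\epsilon(2\beta_p+\epsilon)$, factors $U_\Delta^*U_\Delta=U^*U\bigl(I+(U^*U)^{-1}(U_\Delta^*U_\Delta-U^*U)\bigr)$ and inverts via Neumann series, and then uses the same two-term splitting $A^{-1}(U^*-U_\Delta^*)+(A^{-1}-A_\Delta^{-1})U_\Delta^*$ for the final estimate. Your resolvent identity $A^{-1}-A_\Delta^{-1}=A^{-1}(A_\Delta-A)A_\Delta^{-1}$ is algebraically equivalent to the paper's $M^{-1}-M_\Delta^{-1}=N(I+N)^{-1}M^{-1}$ after substituting $M_\Delta^{-1}=(I+N)^{-1}M^{-1}$, so even this step is the same in substance.
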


\begin{obs}\label{obs5}
Observe that if $p = 2$ we do not need to require invertibility of $U^* U$. As we mentioned above,
it follows automatically.
%From theorem \ref{teo5.1} it follows that the existence of
%the number $\gamma_{0}>0$ in theorem \ref{teo5.2} is guaranteed.
\end{obs}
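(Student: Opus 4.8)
The plan is to show that, at $p=2$, positivity and invertibility of $U^{*}U$ are forced by $2$-stability alone, exactly as sketched in the paragraph preceding \thmref{teo5.2}, so that no separate hypothesis is needed. First I would record that when $p=2$ the coefficient spaces $(\ell^{2}(\mathbb{Z}^d))^{(r)}$ and $(\ell^{2}(J))^{(t)}$ are genuine Hilbert spaces under the componentwise inner product, and that the norm of \eqref{ellpnorm} (the $\ell^{1}$-sum of the component norms) is equivalent to the associated Hilbert norm, differing by a factor at most $\sqrt{r}$ (respectively $\sqrt{t}$). With respect to this inner product, the matrix $U^{*}$ defined by conjugating the entries of $U$ is precisely the Hilbert-space adjoint of $U$, as already observed in the text.

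Next I would invoke $2$-stability. By Proposition \ref{prop5.1} the sampling operator satisfies $\eta_{2}\norm{C}\le\norm{UC}\le\beta_{2}\norm{C}$ in the norm of \eqref{ellpnorm}; by the equivalence of norms just noted there is a constant $\kappa>0$ with $\norm{UC}_{\HH}\ge\kappa\norm{C}_{\HH}$ in the Hilbert norm. Consequently, for every $C$,
\[
\la U^{*}UC,C\ra=\norm{UC}_{\HH}^{2}\ge\kappa^{2}\norm{C}_{\HH}^{2},
\]
so $U^{*}U$ is self-adjoint, positive, and bounded below by $\kappa^{2}>0$. A positive self-adjoint operator whose spectrum is bounded away from zero is invertible; hence $U^{*}U$ is invertible, which is exactly the content of the remark.

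Alternatively, and this is the route the surrounding discussion points to, one may argue through the sampling frame. Since $X$ is a $\overrightarrow{\mu}$-sampling set for $V^{2}(\Phi)$, Proposition \ref{prop4.3} furnishes the $(\overrightarrow{\mu},X)$-sampling frame $\{\Psi_{x_{j}}\}$, and by Remark \ref{obs1} its analysis operator $T$ is isomorphic to $U$ with $S=T^{*}T$. Standard frame theory (\cite{DS52}) guarantees that the frame operator $S$ is bounded, invertible, self-adjoint, and positive; transporting this conclusion through the isomorphism yields the same for $U^{*}U$.

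The only real subtlety, and the one point I would be careful to verify, is the nonstandard norm convention of \eqref{ellpnorm}: one must confirm both that $U^{*}$ (built from conjugated entries) genuinely coincides with the Hilbert adjoint, and that the lower bound $\eta_{2}$ survives the passage from the $\ell^{1}$-sum norm to the Hilbert norm. Both are immediate from the finite-dimensional equivalence of the $\ell^{1}$- and $\ell^{2}$-sums over the $r$ (respectively $t$) components, so no essential difficulty arises; the remark is genuinely a free consequence of $2$-stability combined with the self-adjointness of $U^{*}U$ available only at $p=2$.
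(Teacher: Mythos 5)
Your proposal is correct, and it actually contains two arguments. Your ``alternative'' route is precisely the paper's own justification: the paragraph preceding Theorem~\ref{teo5.2} observes that at $p=2$ the operator $U^*$ is the Hilbert adjoint of $U$, so that $U^*U$ is isomorphic (via the Riesz-basis isomorphism of Remark~\ref{obs1}) to the frame operator $S=T^*T$ of the $(\overrightarrow{\mu},X)$-sampling frame furnished by Proposition~\ref{prop4.3}, and $S$ is bounded, invertible, self-adjoint and positive by \cite{DS52}. Your primary argument is genuinely different and more elementary: it bypasses the sampling frame entirely and works directly on the coefficient space, using only Proposition~\ref{prop5.1} (the lower bound $\eta_2$), the equivalence of the $\ell^1$-sum norm of \eqref{ellpnorm} with the Hilbert norm (with constants $\sqrt{r}$ and $\sqrt{t}$), and the standard fact that a self-adjoint operator satisfying $\la U^{*}UC,C\ra=\norm{UC}_{\HH}^{2}\geq\kappa^{2}\norm{C}_{\HH}^{2}$ is invertible (it is bounded below, hence injective with closed range, and has dense range since $\ker(U^{*}U)^{\perp}$ is the closure of the range). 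What each approach buys: yours is self-contained and yields an explicit spectral lower bound of order $\eta_2^{2}/t$ for $U^{*}U$, while the paper's route ties the invertibility to the frame machinery it is building, identifying $(U^{*}U)^{-1}U^{*}$ with an isomorphic copy of the synthesis operator for the dual frame, which is what gives Theorem~\ref{teo5.2} its reconstruction-theoretic meaning. The one subtlety you flag, whether the entrywise-conjugate $U^{*}$ coincides with the Hilbert adjoint and whether the bound $\eta_2$ survives the change of norm, is exactly the right thing to check, and you resolve both points correctly.
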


\begin{obs}\label{obs6} If in Theorem \ref{teo5.2} we let $r=t=1$, $p = 2$,
and $\mu=\mu^{1}=\delta_{0}$, then we obtain an analog of Theorem 3.3 in
\cite{aacl}.
\end{obs}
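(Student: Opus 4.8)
The plan is purely one of verification: I would substitute the specializations $r=t=1$, $p=2$, and $\mu=\mu^{1}=\delta_{0}$ into each object appearing in \thmref{teo5.2} and check that the resulting statement coincides with (the analog of) Theorem 3.3 of \cite{aacl}. No new analysis is required; the content of the remark is merely that \thmref{teo5.2} contains the earlier result as a special case. First I would reduce the data of the model. With $r=t=1$ the vector $\Phi$ is a single generator $\phi$ and the vector $\overrightarrow{\mu}$ is a single measure $\mu$; setting $\mu=\delta_{0}$ gives $f\ast\overrightarrow{\mu}=f$, so the samples $(f\ast\overrightarrow{\mu})(x_{j})$ become the pointwise values $f(x_{j})$. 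Thus $V^{2}(\phi)$ is a Hilbert shift-invariant space with a single generator and $X$ is an ideal sampling set, precisely the setting of \cite{aacl}. Correspondingly, the sampling operator $U$ degenerates from a $t\times r$ matrix of operators to a single operator $\ell^{2}(\mathbb{Z}^d)\to\ell^{2}(J)$ with entries $U_{j,k}=\phi(x_{j}-k)$.

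Next I would check the hypotheses and identify the operators. Since $p=2$, the operator $U^{*}$ defined before \eqref{5.1.1} is the genuine Hilbert-space adjoint of $U$; by Remark~\ref{obs5} the invertibility of $U^{*}U$ is then automatic, so the extra hypothesis of \thmref{teo5.2} is vacuous in this case, matching the setting of \cite{aacl}. By Remark~\ref{obs1} the analysis operator of the $(\overrightarrow{\mu},X)$-sampling frame of Proposition~\ref{prop4.3} is isomorphic to $U$, so $U^{*}U$ is isomorphic to the frame operator $S$ and the left inverse $(U^{*}U)^{-1}U^{*}$ is isomorphic to the synthesis map $D\mapsto\sum_{j}D_{j}\widetilde{\Psi}_{x_{j}}$ that implements the reconstruction \eqref{3.8.5.1}. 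Consequently the quantity $\|(U^{*}U)^{-1}U^{*}-(U_{\Delta}^{*}U_{\Delta})^{-1}U_{\Delta}^{*}\|$ bounded in \thmref{teo5.2} is exactly the operator-norm distance between the reconstruction operators of the true and the perturbed ideal sampling models, which is the quantity estimated in Theorem 3.3 of \cite{aacl}.

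Finally I would match the constants term by term. With the frame bounds of the sampling frame identified, via Remark~\ref{obs1}, with the lower and upper bounds $\eta_{2},\beta_{2}$ of $U$ from \eqref{5.1}, the admissible range $\epsilon\in(0,-\beta_{2}+\sqrt{\beta_{2}^{2}+\eta_{2}^{2}})$, the auxiliary quantity $\nu=\eta_{2}^{-2}\epsilon(\epsilon+2\beta_{2})$, and the final bound $\eta_{2}^{-2}\bigl(\epsilon+\nu(\beta_{2}+\epsilon)/(1-\nu)\bigr)$ reduce to the expressions in \cite{aacl}. The only point requiring care---and it is bookkeeping rather than a genuine obstacle---is keeping track of the isomorphism of Remark~\ref{obs1}: one must confirm that the frame bounds used in \cite{aacl} correspond to the bounds $\eta_{2},\beta_{2}$ of $U$ itself rather than to those of $S=U^{*}U$, so that the constants in the two estimates line up correctly. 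Once this identification is made the two statements are literally the same, which is exactly what the remark asserts.
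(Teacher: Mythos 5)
Your verification-by-specialization is correct and is exactly the (implicit) justification behind the paper's remark, which offers no separate proof: setting $r=t=1$, $\mu=\delta_{0}$ turns the samples into pointwise values $f(x_{j})$ and the sampling operator into the single matrix $U_{j,k}=\phi(x_{j}-k)$, while $p=2$ makes $U^{*}$ the genuine adjoint so that invertibility of $U^{*}U$ is automatic (Remark~\ref{obs5}), and the bound of Theorem~\ref{teo5.2} then specializes to the jitter-error estimate of \cite{aacl}. Your only slight overstatement is the closing claim that the two statements are ``literally the same''---the paper deliberately says \emph{analog}, since the formulation and constants in \cite{aacl} need not match verbatim---but this does not affect the correctness of the argument.
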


Let $(X,\Phi,\overrightarrow{\mu})$ be a $p$-stable sampling model for some $p\in[1,\infty]$.
Assume that its sampling operator $U$ satisfies (\ref{5.1}) and the operator $U^* U$ is invertible.
%Let $\Phi\in(\mathit{W}_{0}^{1})^{(r)}$ and
%$\overrightarrow{\mu}\in(\mathcal{M}(\mathbb{R}^d))^{(t)}$ be given,
%and let $U$ be the sampling operator defined as above when $p=2$.
%If the operator $U^*U$ is invertible,
We define the \emph{reconstruction operator} $R =
R_{(X,\Phi,\overrightarrow{\mu})}: (\lp(J))^{(t)}\rightarrow
V^p(\Phi)$ by
\begin{displaymath}
R D=\sum_{k\in
\mathds{Z}^d}[(U^{*}U)^{-1}U^{*}D]_{k}^{T}\Phi(\cdot-k),
\end{displaymath}
$D=(d^{1},\ldots,d^{t})^{T}$ in $(\lp(J))^{(t)}$.

Then as
an immediate consequence of Theorems \ref{bigshell} and \ref{teo5.2}, we have
the following result.

\bt\label{lastbig}
Let $(X,\Phi,\overrightarrow{\mu})$ be a $p$-stable sampling model for some $p\in[1,\infty]$.
Assume that its sampling operator $U$ is such that $U^* U$ is invertible.
Let $R$ be the reconstruction operator. %Let also $(X+\Delta,\Theta,\overrightarrow{\alpha})$ be a perturbed sampling model.
Then for every $\epsilon > 0$ there exists $\epsilon_0 > 0$ such that for every
$\Delta = \{\delta_j,\ j\in J\}$,
$\Theta\in(\mathit{W}_0^{1})^{(r)}$, and
$\overrightarrow{\alpha}\in(\mathcal{M}(\mathbb{R}^d))^{(t)}$
satisfying
\[\norm{\Delta}_\infty+\|\Phi-\Theta\|_{(\mathit{W}^{1})^{(r)}}+\|\overrightarrow{\mu}-\overrightarrow{\alpha}\|_{(\mathcal{M}(\mathbb{R}^d))^{(t)}}<\epsilon_{0},\]
we have
\[
\norm{R((g\ast\overrightarrow{\alpha})(X+\Delta))-f}_{L^p} < \epsilon,\ f = \sum_{k\in\GG} C_k^T\Phi_k,\ g = \sum_{k\in\GG} C_k^T\Theta_k,
\]
for all $C\in(\lp(\GG))^{(r)}$.
\et

Theorem \ref{lastbig} tells us that the reconstruction error is, indeed, controlled in a continuous fashion by each and all of the perturbation
errors studied in this paper.

Our final result is a combination of the above theorem with the results of section 3.2.

\bt\label{lastcombine}
Let $(X,\Phi,\overrightarrow{\mu})$ be a $2$-stable sampling model such that $\Phi\in\mathcal{W}_{s}$ and
$\overrightarrow{\mu}\in(\mathcal{M}_{s}(\mathbb{R}^d))^{(t)}$.
%Assume that its sampling operator $U$ is such that $U^* U$ is invertible.
Let $R$ be the reconstruction operator for $(X,\Phi,\overrightarrow{\mu})$. %Let also $(X+\Delta,\Theta,\overrightarrow{\alpha})$ be a perturbed sampling model.
Then for every $\epsilon > 0$ there exists $\epsilon_0 > 0$ such that for every
$\Delta = \{\delta_j,\ j\in J\}$,
$\Theta\in\mathcal{W}_{s}$, and
$\overrightarrow{\alpha}\in(\mathcal{M}_{s}(\mathbb{R}^d))^{(t)}$
satisfying
\[\norm{\Delta}_\infty+\|\Phi-\Theta\|_{(\mathit{W}^{1})^{(r)}}+\|\overrightarrow{\mu}-\overrightarrow{\alpha}\|_{(\mathcal{M}(\mathbb{R}^d))^{(t)}}<\epsilon_{0},\]
we have
\[
\norm{R((g\ast\overrightarrow{\alpha})(X+\Delta))-f}_{L^p} < \epsilon,\ f = \sum_{k\in\GG} C_k^T\Phi_k,\ g = \sum_{k\in\GG} C_k^T\Theta_k,
\]
for all $p\in[1,\infty]$ and all $C\in(\lp(\GG))^{(r)}$.
\et

The proofs in the following section show implicitly how numerical estimates for $\epsilon_0$ in Theorems \ref{lastbig} and \ref{lastcombine} may be obtained.

\section{Proofs}

\subsection{Auxiliary results.}\

We begin with technical results that are needed for the main proofs.

%Lemma4.1
\begin{lem}\label{lem4.1}
Let $\phi\in\mathit{W}_{0}^{1}$ , and
$\mu\in\mathcal{M}(\mathbb{R}^d)$. Then:
\begin{equation}\label{4.1.1}
\phi\ast\mu\in\mathit{W}_{0}^{1},\mbox { and }
\end{equation}
\begin{equation}\label{4.1.2}
\|\phi\ast\mu\|_{\mathit{W}^{1}}\leq
2^{d}\|\phi\|_{\mathit{W}^{1}}\|\mu\| .
\end{equation}
\end{lem}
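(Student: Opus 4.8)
The plan is to establish the norm estimate \eqref{4.1.2} by a direct cube-by-cube computation, and then to deduce \eqref{4.1.1} from the resulting finiteness together with a dominated-convergence argument for continuity.

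First I would fix $k\in\zd$ and estimate $(\phi\ast\mu)(x+k)$ for $x\in[0,1)^d$. Moving the absolute value inside the integral gives $\abs{(\phi\ast\mu)(x+k)}\le\int_{\rd}\abs{\phi(x+k-y)}\,d|\mu|(y)$. I would then split $\rd$ into the disjoint half-open unit cubes $[0,1)^d+l$, $l\in\zd$, and write $y=l+u$ with $u\in[0,1)^d$, so that the argument of $\phi$ becomes $(k-l)+(x-u)$, where crucially $x-u\in(-1,1)^d$.

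The geometric heart of the argument is the observation that a point $m+w$ with $w\in(-1,1)^d$ lies in exactly one of the $2^d$ cubes $[0,1)^d+m+\epsilon$, $\epsilon\in\{-1,0\}^d$. Writing $a_m:=\operatorname{esssup}_{v\in[0,1)^d}\abs{\phi(v+m)}$, so that $\norm{\phi}_{\mathit{W}^1}=\sum_m a_m$, this yields the bound $\abs{\phi(m+w)}\le\sum_{\epsilon\in\{-1,0\}^d}a_{m+\epsilon}$, which is independent of $w$. Substituting $m=k-l$, integrating over $u$, and taking the essential supremum over $x$ gives
\[
\operatorname{esssup}_{x\in[0,1)^d}\abs{(\phi\ast\mu)(x+k)}\le\sum_{l\in\zd}\Big(\sum_{\epsilon\in\{-1,0\}^d}a_{(k-l)+\epsilon}\Big)\,|\mu|\big([0,1)^d+l\big).
\]
Summing over $k$ and interchanging the (nonnegative) sums, for each fixed $l$ and $\epsilon$ the index shift $m=(k-l)+\epsilon$ collapses $\sum_k a_{(k-l)+\epsilon}$ to $\norm{\phi}_{\mathit{W}^1}$, while $\sum_l|\mu|([0,1)^d+l)=\norm{\mu}$. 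The $2^d$ choices of $\epsilon$ then produce exactly the factor $2^d$, giving \eqref{4.1.2}.

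For \eqref{4.1.1}, the same pointwise bound shows the defining integral converges absolutely, so $\phi\ast\mu$ is everywhere defined, and \eqref{4.1.2} places it in $\mathit{W}^1$. Continuity follows from dominated convergence: since $\phi\in\mathit{W}_0^1$ is continuous, $\phi(x-y)\to\phi(x_0-y)$ pointwise in $y$ as $x\to x_0$, and for $x$ in the unit ball around $x_0$ the integrand is dominated by the local majorant $y\mapsto\sup_{\abs{h}\le1}\abs{\phi(x_0-y+h)}$, which is again a $\mathit{W}^1$ function and hence $|\mu|$-integrable by the estimate just proved. The main obstacle is the book-keeping in the geometric step, namely verifying that the translate $x-u\in(-1,1)^d$ is covered by precisely the $2^d$ neighboring cubes and that the reindexing of $\sum_k a_{(k-l)+\epsilon}$ is legitimate (absolute convergence permits free rearrangement); everything else is a routine Tonelli-type interchange for nonnegative quantities.
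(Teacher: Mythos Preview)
Your argument is correct. For the norm bound \eqref{4.1.2} you and the paper follow essentially the same geometric idea: the paper pushes the esssup and the sum over $k$ inside the integral, recognizes the result as $\int\|\phi(\cdot-y)\|_{W^1}\,d|\mu|(y)$, and then invokes the translate estimate $\|\phi(\cdot-y)\|_{W^1}\le 2^d\|\phi\|_{W^1}$ as a known fact; your cube-by-cube computation with the $2^d$ neighbors $\epsilon\in\{-1,0\}^d$ is exactly an explicit proof of that translate estimate, carried out in a different order. So the two are the same argument at different levels of unpacking.

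For continuity the routes genuinely diverge. The paper observes that $\phi\in W_0^1$ is uniformly continuous (continuous plus vanishing at infinity) and runs a direct $\epsilon$--$\delta$ estimate on $\int|\phi(z-y)-\phi(z_0-y)|\,d|\mu|(y)$; your dominated-convergence approach is equally valid and perhaps more robust, though the justification ``$|\mu|$-integrable by the estimate just proved'' is a bit oblique---the cleanest reason your majorant $y\mapsto\sup_{|h|\le1}|\phi(x_0-y+h)|$ is $|\mu|$-integrable is simply that it lies in $W^1\subset L^\infty$ and $|\mu|$ is finite.
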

\begin{proof} Note that if $\mu=0$, %we are done, because in this case
%$\phi\ast\mu\equiv0$, and clearly it is a continuous function in
%$\mathbb{R}^{d}$ and belongs to $\mathit{W}^1$, and obviously
%inequality (\ref{4.1.2}) takes place.
the proof is immediate.
Assume now $\mu\neq0$, i.e.
$\|\mu\|>0$. Let $\epsilon>0$ be given. Since
$\phi\in\mathit{W}_{0}^{1}$, then $\phi$ is uniformly continuous
in $\mathbb{R}^d$. Therefore, there exists
$\delta=\delta(\epsilon)>0$ such that
\begin{equation}\label{4.1.3}
|\phi(w)-\phi(w_{1})|<\frac{\epsilon}{\|\mu\|},\quad\textrm{whenever}\quad
\|w-w_{1}\|<\delta .
\end{equation}
Let $z_{0}\in\mathbb{R}^{d}$ be given, and let $z\in\mathbb{R}^d$
be such that $\|z-z_{0}\|<\delta $. Then we have
\begin{eqnarray}
\left|(\phi\ast\mu)(z)-(\phi\ast\mu)(z_{0})\right|&=&\left|\int_{\mathbb{R}^d}\phi(z-y)d\mu(y)-\int_{\mathbb{R}^d}\phi(z_{0}-y)d\mu(y)\right|\nonumber\\
&=&\left|\int_{\mathbb{R}^d}(\phi(z-y)-\phi(z_{0}-y))d\mu(y)\right| \nonumber\\
&\leq& \int_{\mathbb{R}^d}|\phi(z-y)-\phi(z_{0}-y)|d|\mu|(y).
\nonumber
\end{eqnarray}
Since $\|(z-y)-(z_{0}-y)\|=\|z-z_{0}\|<\delta$, for all
$y\in\mathbb{R}^d$, then  it follows from (\ref{4.1.3}) that
$\int_{\mathbb{R}^d}|(\phi(z-y)-\phi(z_{0}-y))|\,
d|\mu|(y)<\int_{\mathbb{R}^d}\frac{\epsilon}{\|\mu\|} d|\mu|(y)
=\epsilon$. Since $z_{0}$ and $\epsilon>0$ are arbitrary, we
obtain the continuity of $\phi\ast\mu$ in $\mathbb{R}^d$.

Let us show (\ref{4.1.2}). Let $\phi\in\mathit{W}^1$ and
$\mu\in\mathcal{M}(\mathbb{R}^d)$ be given. Then %we have
%\newpage
\begin{eqnarray}
&\|\phi\ast\mu\|_{\mathit{W}^1} = \sum\limits_{k\in\mathbb{Z}^d}\underset{{x\in[0,1]^{d}}}{\operatorname{esssup}}\,\left|\int\limits_{\mathbb{R}^d}\phi(x+k-y)
\,d\mu(y)\right| \leq \nonumber\\
%\leq&\sum_{k\in\mathbb{Z}^d}\underset{{x\in[0,1]^{d}}}{\operatorname{esssup}}\,\int_{\mathbb{R}^d}|\phi(x+k-y)|d|\mu|(y)\nonumber \\
%&\leq&\sum_{k\in\mathbb{Z}^d}\operatorname{esssup}_{x\in[0,1]^{d}}\,\int_{\mathbb{R}^d}\operatorname{esssup}_{x\in[0,1]^{d}}\,|\phi(x+k-y)|d|\mu|(y)\nonumber\\
&\sum\limits_{k\in\mathbb{Z}^d}\int\limits_{\mathbb{R}^d}\underset{{x\in[0,1]^{d}}}{\operatorname{esssup}}\,|\phi(x+k-y)|\,d|\mu|(y)\leq\nonumber\\
&\int\limits_{\mathbb{R}^d}\left(\sum\limits_{k\in\mathbb{Z}^d}\underset{{x\in[0,1]^{d}}}{\operatorname{esssup}}\,|\phi(x+k-y)|\right)d|\mu|(y)%\nonumber\\
=\int\limits_{\mathbb{R}^d}\|\phi(\cdot-y)\|_{\mathit{W}^1}\,d|\mu|(y).\nonumber
\end{eqnarray}
Since
%\begin{displaymath}
$\|\phi(\cdot-y)\|_{\mathit{W}^1}\leq2^{d}\|\phi\|_{\mathit{W}^1}$,  for all  $y\in \mathbb{R}^d$,
%\end{displaymath}
%then it follows
we get
%\begin{eqnarray}
\[\int_{\mathbb{R}^d}\|\phi(\cdot-y)\|_{\mathit{W}^1}d|\mu|(y)\leq\int_{\mathbb{R}^d}2^{d}\|\phi\|_{\mathit{W}^1}d|\mu|(y)%\nonumber\\
=2^{d}\|\phi\|_{\mathit{W}^1}\|\mu\|.
\]%\end{eqnarray}
Therefore, we get (\ref{4.1.2}).
\end{proof}

The next proposition collects basic facts about Wiener amalgam spaces, shift invariant spaces
$V^p(\Phi)$, and separated sets in $\RR^d$.

%Proposition4.2
\begin{prop}\label{prop4.2}
Let $\Phi\in(\mathit{W}_{0}^{1})^{(r)}$,
$\overrightarrow{\mu}\in(\mathcal{M}(\mathbb{R}^d))^{(t)}$, %and
$f=\sum\limits_{k\in\mathbb{Z}^d}C_{k}^{T}\Phi_{k}$, where
$C\in(\ell^{p}(\mathds{Z}^d))^{(r)}$, and $\Phi_{k}=\Phi(\cdot-k)$, for
all $k\in\mathbb{Z}^d$. Let also $X=\{x_{j},j\in J\}$ be a separated
set in $\mathbb{R}^d$ with a separation constant $\delta>0$. Then
\begin{equation}\label{4.2.1}
\Phi\ast\overrightarrow{\mu}\in(\mathit{W}_{0}^{1})^{(r\times t)};
\end{equation}
\begin{equation}\label{4.2.2}
\|\Phi\ast\overrightarrow{\mu}\|_{(\mathit{W}^{1})^{(r\times
t)}}\leq2^{d}\|\Phi\|_{(\mathit{W}^{1})^{(r)}}\|\overrightarrow{\mu}\|_{(\mathcal{M}(\mathbb{R}^d))^{(t)}};
\end{equation}
\begin{equation}\label{4.2.3}
V^{p}(\Phi)\subset\mathit{W}_{0}^{p},\mbox{ for all }
1\leq p \leq\infty;
\end{equation}
\begin{equation}\label{4.2.4}
\|f\|_{\mathit{W}^{p}}\leq\|C\|_{(\ell^{p}(\mathbb{Z}^d))^{(r)}}\|\Phi\|_{(\mathit{W}^{1})^{(r)}};
\end{equation}
\begin{equation}\label{4.2.5}
\|f(X)\|_{\ell^{p}(J)}\leq\mathcal{N}\|f\|_{\mathit{W}^{p}},\mbox{ where }
\mathcal{N}=\mathcal{N}(\delta,p,d)=(\frac{\sqrt{d}}{\delta}+1)^{d/p}.
\end{equation}
\end{prop}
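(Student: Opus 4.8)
The plan is to prove the five assertions in order, reusing Lemma~\ref{lem4.1} for the first two and reducing the last three to a discrete convolution estimate and a packing (counting) argument. For \eqref{4.2.1} and \eqref{4.2.2} I would argue entrywise: the matrix $\Phi\ast\overrightarrow{\mu}$ has entries $\phi^{i}\ast\mu^{l}$, and Lemma~\ref{lem4.1} already gives $\phi^{i}\ast\mu^{l}\in\mathit{W}_{0}^{1}$ together with $\|\phi^{i}\ast\mu^{l}\|_{\mathit{W}^{1}}\leq 2^{d}\|\phi^{i}\|_{\mathit{W}^{1}}\|\mu^{l}\|$. Summing over $1\leq i\leq r$ and $1\leq l\leq t$ and recognizing the product structure of the two vector norms, $\sum_{i,l}\|\phi^{i}\|_{\mathit{W}^{1}}\|\mu^{l}\| = \big(\sum_{i}\|\phi^{i}\|_{\mathit{W}^{1}}\big)\big(\sum_{l}\|\mu^{l}\|\big)$, yields \eqref{4.2.2}, and membership in $(\mathit{W}_{0}^{1})^{(r\times t)}$ is then immediate.

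For \eqref{4.2.4} (which also gives the inclusion \eqref{4.2.3}) the key observation is that the sequence $M_{n}:=\operatorname{esssup}_{x\in[0,1]^{d}}|f(x+n)|$ is dominated by a finite sum of discrete convolutions. Writing $a^{i}_{m}=\operatorname{esssup}_{x\in[0,1]^{d}}|\phi^{i}(x+m)|$, so that $\|a^{i}\|_{\ell^{1}}=\|\phi^{i}\|_{\mathit{W}^{1}}$ by \eqref{2.1.2}, the substitution $m=n-k$ in $f(x+n)=\sum_{i}\sum_{k}c^{i}_{k}\phi^{i}(x+n-k)$ gives $M_{n}\leq\sum_{i}(|c^{i}|\ast a^{i})_{n}$. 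Young's inequality for discrete convolutions then yields $\||c^{i}|\ast a^{i}\|_{\ell^{p}}\leq\|c^{i}\|_{\ell^{p}}\|\phi^{i}\|_{\mathit{W}^{1}}$, and since all quantities are nonnegative, $\sum_{i}\|c^{i}\|_{\ell^{p}}\|\phi^{i}\|_{\mathit{W}^{1}}\leq\big(\sum_{i}\|c^{i}\|_{\ell^{p}}\big)\big(\sum_{i}\|\phi^{i}\|_{\mathit{W}^{1}}\big)=\|C\|_{(\ell^{p}(\mathbb{Z}^{d}))^{(r)}}\|\Phi\|_{(\mathit{W}^{1})^{(r)}}$, which is \eqref{4.2.4}. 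For $p=\infty$ I would replace Young's inequality by the crude bound $M_{n}\leq\sum_{i}\|c^{i}\|_{\ell^{\infty}}\|\phi^{i}\|_{\mathit{W}^{1}}$, using subadditivity of the essential supremum. Continuity of $f$, hence $f\in\mathit{W}_{0}^{p}$, follows from local uniform convergence of the partial sums $\sum_{|k|\leq N}C_{k}^{T}\Phi_{k}$, each of which is continuous because $\Phi\in(\mathit{W}_{0}^{1})^{(r)}$.

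The heart of the proof, and the step I expect to be the \emph{main obstacle}, is \eqref{4.2.5}, where separation must be converted into a uniform bound on the number of samples in a unit cube. Partitioning $[0,1)^{d}+n$ into $L^{d}$ congruent subcubes with $L=\lfloor\sqrt{d}/\delta\rfloor+1$, each subcube has diameter $\sqrt{d}/L<\delta$, so the separation hypothesis forces at most one point of $X$ per subcube; hence $N_{n}:=\#\{j\in J:x_{j}\in[0,1)^{d}+n\}\leq L^{d}\leq(\sqrt{d}/\delta+1)^{d}=\mathcal{N}^{p}$ uniformly in $n$. Since $f$ is continuous we may use $\sup=\operatorname{esssup}$ on each cube, so $|f(x_{j})|\leq M_{n}$ whenever $x_{j}\in[0,1)^{d}+n$, and summing over the cubes gives $\sum_{j\in J}|f(x_{j})|^{p}=\sum_{n}\sum_{x_{j}\in[0,1)^{d}+n}|f(x_{j})|^{p}\leq\sum_{n}N_{n}M_{n}^{p}\leq\mathcal{N}^{p}\sum_{n}M_{n}^{p}=\mathcal{N}^{p}\|f\|_{\mathit{W}^{p}}^{p}$; taking $p$-th roots yields \eqref{4.2.5}, while $p=\infty$ gives $\mathcal{N}=1$ directly from $\sup_{j}|f(x_{j})|\leq\sup_{n}M_{n}$. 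The only delicate points are choosing $L$ large enough that the subcube diameter is \emph{strictly} below $\delta$, so that the packing count is exact even though the separation inequality is non-strict, and verifying $\sup=\operatorname{esssup}$ on each cube, which is automatic for the continuous $f$ produced in the previous step.
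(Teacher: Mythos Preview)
Your proposal is correct and follows essentially the same route as the paper: entrywise application of Lemma~\ref{lem4.1} for \eqref{4.2.1}--\eqref{4.2.2}, the discrete convolution/Young's inequality bound for \eqref{4.2.4}, and the packing count for \eqref{4.2.5}. The one noteworthy deviation is your treatment of continuity in \eqref{4.2.3}: you invoke local uniform convergence of the partial sums $\sum_{|k|\le N}C_{k}^{T}\Phi_{k}$ for all $p$ at once, whereas the paper splits into two cases, using global uniform convergence of partial sums for $p<\infty$ (via the embedding $W^{p}\subset L^{\infty}$) but, for $p=\infty$, approximating $\Phi$ by compactly supported continuous $\Phi_{n}$ so that each $\sum_{k}C_{k}^{T}\Phi_{n}(\cdot-k)$ is a locally finite sum. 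Your argument is valid (local uniform convergence follows from $\sum_{k}\sup_{x\in K}|\phi^{i}(x-k)|<\infty$ for compact $K$, which is immediate from $\Phi\in(W_{0}^{1})^{(r)}$) and arguably cleaner since it avoids the case split; the paper's $p=\infty$ device has the minor advantage of not needing to justify the interchange of $\sup_{x\in K}$ with the tail sum.
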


\begin{proof} First, Lemma \ref{lem4.1} immediately implies (\ref{4.2.1}).

Next, to prove (\ref{4.2.2}) consider
$\Phi=(\phi^{1},\ldots,\phi^{r})^{T}\in(\mathit{W}_{0}^{1})^{r}$
and $\overrightarrow{\mu}=(\mu^{1},\ldots,\mu^{t})\in
(\mathcal{M}(\mathbb{R}^d))^{(t)}$. Then
%\begin{displaymath}
%\|\Phi\ast\overrightarrow{\mu}\|_{(\mathit{W}^{1})^{(r\times
%t)}}=\sum_{j=1}^{t}\sum_{i=1}^{r}
%\|\phi^{i}\ast\mu^{j}\|_{\mathit{W}^{1}}.
%\end{displaymath}
%The proof of (\ref{4.2.1}) and (\ref{4.2.2}) follows immediately from
%Lemma \ref{lem4.1}, because since
%$\Phi=(\phi^{1},\ldots,\phi^{r})^{T}\in(\mathit{W}_{0}^{1})^{r}$ if
%and only if $\phi^{i}\in\mathit{W}_{0}^{1}$, for all $1 \leq i \leq
%r$, and $\overrightarrow{\mu}=(\mu^{1},\ldots,\mu^{t})\in
%(\mathcal{M}(\mathbb{R}^d)^{(t)}$ if and only if
%$\mu^{j}\in\mathcal{M}(\mathbb{R}^d)$, for all $1 \leq j \leq t$,
%then by (\ref{4.1.1}) in lemma \ref{lem4.1} we have
%$\phi^{i}\ast\mu^{j}$ are continuous functions in $\mathbb{R}^d$,
%for all $1 \leq i \leq r$ and $1 \leq j \leq t$. Thus, the matrix
%$\Phi\ast\overrightarrow{\mu}$ given by
%\begin{displaymath}
%\Phi\ast\overrightarrow{\mu}=\left(\begin{array}{ccc}
%\phi^{1}\ast\mu^{1}&
%\ldots&\phi^{1}\ast\mu^{t}\\
%\vdots&&\vdots\\
%\phi^{r}\ast\mu^{1}&\ldots&\phi^{r}\ast\mu^{t}
%\end{array}\right)
%\end{displaymath}
%is a matrix of continuous functions in $\mathbb{R}^d$. On the
%other hand,
using (\ref{4.1.2}), we obtain %in lemma \ref{lem4.1}, we obtain:
\begin{eqnarray}
\|\Phi\ast\overrightarrow{\mu}\|_{(\mathit{W}^{1})^{(r\times
t)}}&=&\sum_{j=1}^{t}\sum_{i=1}^{r}
\|\phi^{i}\ast\mu^{j}\|_{\mathit{W}^{1}}\leq\nonumber\\
%&\leq&
\sum_{j=1}^{t}\sum_{i=1}^{r}2^{d}\|\phi^{i}\|_{\mathit{W}^{1}}\|\mu^{j}\|%\nonumber\\
&=&2^{d}\|\Phi\|_{(\mathit{W}^{1})^{(r)}}\|\overrightarrow{\mu}\|_{(\mathcal{M}(\mathbb{R}^d))^{(t)}}.\nonumber
\end{eqnarray}
%Note that (\ref{4.2.3}) follows from (\ref{4.2.4}).

Next, we prove
(\ref{4.2.4}). Consider $1 \leq p <\infty$ and
$f=\sum\limits_{k\in\mathbb{Z}^d}C_{k}^{T}\Phi_{k}$. For each $1
\leq s \leq r$ let
$a^{s}(l)=\underset{{x\in[0,1]^{d}}}{\operatorname{esssup}}\,|\phi^{s}(x+l)|$,
for all $l\in\mathbb{Z}^d$. Then
$\|a^{s}\|_{\ell^{1}(\mathbb{Z}^d)}=\|\phi^{s}\|_{\mathit{W}^{1}}$.
Consequently,
$\|a\|_{(\ell^{1}(\mathbb{Z}^d))^{(r)}}=\|\Phi\|_{(\mathit{W}^{1})^{(r)}}$,
where $a=(a^{1},\ldots,a^{r})^{T}$, and
$\Phi=(\phi^{1},\ldots,\phi^{r})^{T}$. Hence,
\[
\underset{{x\in[0,1]^{d}}}{\operatorname{esssup}}\,|f(x+l)|\leq\sum_{s=1}^{r}\sum_{k\in\mathbb{Z}^d}|c^{s}(k)|
\underset{{x\in[0,1]^{d}}}{\operatorname{esssup}}\,|\phi^{s}(x+l-k)|=\sum_{s=1}^{r}(a^{s}\ast|c^{s}|)(l).\nonumber
\]
%\begin{eqnarray}
%\underset{{x\in[0,1]^{d}}}{\operatorname{esssup}}\,|f(x+l)|&\leq&\sum_{s=1}^{r}\sum_{k\in\mathbb{Z}^d}|c^{s}(k)|
%\underset{{x\in[0,1]^{d}}}{\operatorname{esssup}}\,|\phi^{s}(x+l-k)|\nonumber\\
%&=&\sum_{s=1}^{r}(a^{s}\ast|c^{s}|)(l).\nonumber
%\end{eqnarray}
By using Young and triangular inequalities, we have
\begin{displaymath}
\|f\|_{\mathit{W}^{p}}\leq
\sum_{s=1}^{r}\|a^{s}\ast|c^{s}|\|_{\ell^{p}}\leq
\sum_{s=1}^{r}\|a^{s}\|_{\ell^{1}}\|c^{s}\|_{\ell^{p}}.
\end{displaymath}
Consequently,
$\|f\|_{\mathit{W}^{p}}\leq\|C\|_{(\ell^{p}(\mathbb{Z}^d))^{r}}\|\Phi\|_{(\mathit{W}^{1})^{(r)}}$.

Next, let us show (\ref{4.2.3}). Let $f\in V^p(\Phi)$ be given. Then
$f=\sum_{k\in\mathbb{Z}^d}C_{k}^{T}\Phi_{k}$, for some
$C\in(\ell^p(\mathbb{Z}^d))^{(r)}$. Since (\ref{4.2.4}) implies
$f\in\mathit{W}^p$, it remains to show the continuity of $f$. Let us
first consider  the case $1\leq p<\infty$. We observe that
$\mathit{W}^p\subset\mathit{W}^{\infty}=L^{\infty}(\mathbb{R}^d)$
(see Theorem 2.1 in \cite{aagr2}), and, hence,
\begin{equation}\label{4.2.3.1}
\|f\|_{L^{\infty}(\mathbb{R}^d)}\leq d_{1}\|f\|_{\mathit{W}^p},
\end{equation}
for some $d_{1}>0$ independent of $f$. Let $f_{n}=\sum_{|k|\leq
n}C_{k}^{T}\Phi_{k}$ be a partial sum of $f$. Since
$\Phi\in(\mathit{W}_{0}^{1})^{(r)}$, then
$\{f_{n}\}_{n\in\mathbb{N}}$ is a sequence of continuous
functions, and from (\ref{4.2.4}) and (\ref{4.2.3.1}) we obtain
\begin{displaymath}
\|f-f_{n}\|_{L^{\infty}(\mathbb{R}^d)}\leq
d_{1}\|\Phi\|_{(\mathit{W}^1)^{(r)}}\left(\sum_{i=1}^{r}\left(\sum_{|k|>n}|c_{k}^{i}|^{p}\right)^{1/p}\right).
\end{displaymath}
Therefore, the sequence of continuous functions
$\{f_{n}\}_{n\in\mathbb{N}}$ converges uniformly to the function
$f$. Thus, $f$ is a continuous function as well. To treat the case
$p=\infty$, we choose a sequence $\{\Phi_{n}\}_{n\geq 1}$ of
continuous functions with compact support (see Theorem 3.1 in
\cite{aagr2} for details) such that
$\|\Phi_{n}-\Phi\|_{(\mathit{W}^1)^{(r)}}\rightarrow0$ as
$n\rightarrow\infty$. Set
$f_{n}(x)=\sum_{k\in\mathbb{Z}^d}C_{k}^{T}\Phi_{n}(x-k)$. Since the
sum is locally finite, then each $f_{n}$ is continuous. By using
(\ref{4.2.4}) once again, we estimate
\begin{displaymath}
\|f_{n}-f\|_{L^{\infty}(\mathbb{R}^d)}\leq
d_{1}\|C\|_{(\ell^{\infty}(\mathbb{Z}^d))^{(r)}}\|\Phi_{n}-\Phi\|_{(\mathit{W}^1)^{(r)}}.
\end{displaymath}
It follows that the sequence of continuous functions
$\{f_{n}\}_{n\geq 1}$ converges uniformly to $f$. Hence, $f$ is a
continuous function as well.

Finally, let us prove (\ref{4.2.5}). Since
$X=\{x_{j},j\in J\}\subset\mathbb{R^{d}}$ is separated with
a separation constant $\delta>0$, then $\inf_{j\neq
k}|x_{j}-x_{k}|\geq\delta$. Consequently, there exist at most
$([\frac{\sqrt{d}}{\delta}]+1)^{d}$ sampling points in every
$d$-dimensional hypercube $[0,1]^{d}+l$, %for all
$l\in\mathbb{Z}^d$. Therefore,
\[
\sum_{j : x_{j}\in[0,1]^{d}+l}|f(x_{j})|^{p}\leq
(\delta^{-1}\sqrt{d}+1)^{d}\underset{{x\in[0,1]^{d}}}{\operatorname{esssup}}\,|f(x)|^{p},%\nonumber\\
\]%\end{eqnarray}
and, hence,
%\begin{displaymath}
$\|f(X)\|_{\ell^{p}(J)}\leq\mathcal{N}\|f\|_{\mathit{W}^{p}}$, for all
$f\in W^{p}$, where $\mathcal{N}=(\delta^{-1}\sqrt{d}+1)^{d/p}$.
%\end{displaymath}
\end{proof}

Using (\ref{4.2.1}) and (\ref{4.2.2}), %in proposition \ref{prop4.2},
we obtain the following result.

%COROLLARY 4.1(TO PROPOSITION 4.2)

\begin{coro}\label{coro4.1} Let $\Lambda :(\mathit{W}_{0}^{1})^{(r)}\times(\mathcal{M}(\mathbb{R}^d))^{(t)}\longrightarrow(\mathit{W}_{0}^{1})^{(r\times
t)}$ be defined by
$\Lambda(\Phi,\overrightarrow{\mu})=\Phi\ast\overrightarrow{\mu}$.
Then $\Lambda$ is a bounded bilinear form, and
$\|\Lambda\|\leq 2^{d}$, where %. Here the norm of $\Lambda$ is given by
\begin{displaymath}
\|\Lambda\|=\sup\{\|\Lambda(\Phi,\overrightarrow{\mu})\|_{(\mathit{W}_{0}^{1})^{(r\times
t)}}: \|\Phi\|_{(\mathit{W}_{0}^{1})^{(r)}}\leq 1,
\|\overrightarrow{\mu}\|_{(\mathcal{M}(\mathbb{R}^d))^{(t)}}\leq
1\}.
\end{displaymath}
\end{coro}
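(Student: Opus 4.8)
The plan is to read off the corollary directly from Proposition \ref{prop4.2}, so the proof amounts to three short verifications: bilinearity, that the image lands in $(\mathit{W}_{0}^{1})^{(r\times t)}$, and the norm bound. First I would check bilinearity. For a fixed $\overrightarrow{\mu}=(\mu^1,\ldots,\mu^t)$, each entry of the matrix $\Phi\ast\overrightarrow{\mu}$ is $\phi^i\ast\mu^l=\int_{\mathbb{R}^d}\phi^i(\cdot-y)\,d\mu^l(y)$, which depends linearly on $\phi^i$; hence $\Phi\mapsto\Phi\ast\overrightarrow{\mu}$ is linear. Symmetrically, for fixed $\Phi$ the integral depends linearly on the measure, so $\overrightarrow{\mu}\mapsto\Phi\ast\overrightarrow{\mu}$ is linear as well. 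Thus $\Lambda$ is bilinear.

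Next, well-definedness of the codomain is exactly \eqref{4.2.1}: Proposition \ref{prop4.2} asserts $\Phi\ast\overrightarrow{\mu}\in(\mathit{W}_{0}^{1})^{(r\times t)}$ whenever $\Phi\in(\mathit{W}_{0}^{1})^{(r)}$ and $\overrightarrow{\mu}\in(\mathcal{M}(\mathbb{R}^d))^{(t)}$, so $\Lambda$ genuinely maps into the stated space. For the norm estimate I would invoke \eqref{4.2.2}, namely
\[
\|\Phi\ast\overrightarrow{\mu}\|_{(\mathit{W}^{1})^{(r\times t)}}\leq 2^{d}\,\|\Phi\|_{(\mathit{W}^{1})^{(r)}}\,\|\overrightarrow{\mu}\|_{(\mathcal{M}(\mathbb{R}^d))^{(t)}}.
\]
Since $\mathit{W}_{0}^{1}$ is a closed subspace of $\mathit{W}^{1}$ carrying the same $\mathit{W}^{1}$-norm, the norms appearing in the definition of $\|\Lambda\|$ coincide with those in \eqref{4.2.2}. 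Taking the supremum over the unit balls $\|\Phi\|_{(\mathit{W}_{0}^{1})^{(r)}}\leq 1$ and $\|\overrightarrow{\mu}\|_{(\mathcal{M}(\mathbb{R}^d))^{(t)}}\leq 1$ then yields $\|\Lambda\|\leq 2^{d}$, completing the proof.

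There is no real obstacle here, as all the analytic content has already been absorbed into Proposition \ref{prop4.2} (in particular into Lemma \ref{lem4.1} and the translation estimate $\|\phi(\cdot-y)\|_{\mathit{W}^1}\leq 2^d\|\phi\|_{\mathit{W}^1}$). The only point meriting a sentence of care is the linearity of $\Lambda$ in the measure argument: it relies on the linearity of the integral $\mu^l\mapsto\int\phi^i(\cdot-y)\,d\mu^l(y)$, which is unproblematic since each $\mu^l$ is a finite Borel measure and $\phi^i\in\mathit{W}_0^1\subset L^\infty$ is bounded, so all integrals converge absolutely. Thus the corollary is an immediate packaging of \eqref{4.2.1} and \eqref{4.2.2} into the language of bounded bilinear forms.
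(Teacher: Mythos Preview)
Your proof is correct and matches the paper's approach exactly: the paper simply states that the corollary follows from \eqref{4.2.1} and \eqref{4.2.2}, which is precisely what you invoke for well-definedness and the norm bound. Your additional remarks on bilinearity and absolute convergence are reasonable fleshing-out of what the paper leaves implicit.
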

The following lemma, proved, for example, in \cite{aaik}, states that a small perturbation
of a Riesz basic sequence remains a Riesz basic sequence.
%Lemma4.3
\begin{lem}\label{lem4.3}
Let $\Phi\in(\mathit{W}^{1})^{(r)}$ satisfy (\ref{2.1.1}). Then
there exists $\epsilon_{0}>0$ such that every
$\Theta\in(\mathit{W}^{1})^{(r)}$ satisfying
$\|\Phi-\Theta\|_{(\mathit{W}^{1})^{(r)}}\leq
\epsilon<\epsilon_{0},$ also satisfies (\ref{2.1.1}), for some
$0<m_{p}^{'}\leq M_{p}^{'}<\infty$ and
%\end{lem}
%From the proof of lemma \ref{lem4.3} given in \cite{aaik} we
%obtain the following estimates for the constants $m_{p}^{'}$ and
%$M_{p}^{'}$, respectively:
\begin{equation}\label{4.3.3}
m_{p}^{'}\geq m_{p}-\epsilon \qquad \textrm{and}\qquad
M_{p}^{'}\leq \|\Phi\|_{(\mathit{W}^{1})^{(r)}}+\epsilon.
\end{equation}
\end{lem}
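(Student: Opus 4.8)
The plan is to run a standard perturbation argument whose only real ingredient is the synthesis bound \eqref{4.2.4}. The key observation is that although Proposition~\ref{prop4.2} is stated for generators in $(\mathit{W}_0^1)^{(r)}$, the proof of \eqref{4.2.4} uses only the amalgam-norm structure (it passes through $a^s(l)=\operatorname{esssup}_{x\in[0,1]^d}|\phi^s(x+l)|$, Young's inequality, and the triangle inequality, and never invokes continuity). Consequently \eqref{4.2.4} holds verbatim for any generator in $(\mathit{W}^1)^{(r)}$ and every $p\in[1,\infty]$: for $g=\sum_k C_k^T \Psi_k$ one has $\|g\|_{L^p}\le\|g\|_{\mathit{W}^p}\le \|C\|_{(\ell^p(\mathbb{Z}^d))^{(r)}}\,\|\Psi\|_{(\mathit{W}^1)^{(r)}}$. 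I will apply this both to $\Psi=\Theta$ and, crucially, to $\Psi=\Phi-\Theta$.

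Fix $C\in(\ell^p(\mathbb{Z}^d))^{(r)}$ and write $f_\Phi=\sum_k C_k^T\Phi_k$ and $f_\Theta=\sum_k C_k^T\Theta_k$. Since $f_\Phi-f_\Theta=\sum_k C_k^T(\Phi-\Theta)_k$ is the synthesis of $C$ against the generator $\Phi-\Theta\in(\mathit{W}^1)^{(r)}$, the bound above gives $\|f_\Phi-f_\Theta\|_{L^p}\le \|\Phi-\Theta\|_{(\mathit{W}^1)^{(r)}}\,\|C\|\le \epsilon\|C\|$. For the upper bound I apply \eqref{4.2.4} directly to $\Theta$, together with $\|\Theta\|_{(\mathit{W}^1)^{(r)}}\le\|\Phi\|_{(\mathit{W}^1)^{(r)}}+\epsilon$, to obtain $\|f_\Theta\|_{L^p}\le(\|\Phi\|_{(\mathit{W}^1)^{(r)}}+\epsilon)\|C\|$, which yields exactly $M_p'\le\|\Phi\|_{(\mathit{W}^1)^{(r)}}+\epsilon$. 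For the lower bound I use the reverse triangle inequality together with the hypothesis \eqref{2.1.1} for $\Phi$: $\|f_\Theta\|_{L^p}\ge\|f_\Phi\|_{L^p}-\|f_\Phi-f_\Theta\|_{L^p}\ge m_p\|C\|-\epsilon\|C\|=(m_p-\epsilon)\|C\|$, giving $m_p'\ge m_p-\epsilon$.

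To finish I set $\epsilon_0=m_p$; then $\epsilon<\epsilon_0$ forces $m_p-\epsilon>0$, so $m_p'=m_p-\epsilon>0$ is a genuine lower Riesz bound and $\Theta$ satisfies \eqref{2.1.1} with the advertised constants. There is no serious obstacle here: once the synthesis estimate is in hand the whole argument is two applications of the triangle inequality. The only point requiring care is the remark that \eqref{4.2.4} is available for the (generally non-continuous) generator $\Phi-\Theta$ and is uniform in $p$; everything else is routine.
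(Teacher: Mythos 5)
Your proof is correct and is essentially the argument the paper intends: the paper gives no internal proof of Lemma~\ref{lem4.3} (it cites \cite{aaik}), but the specific bounds in \eqref{4.3.3} and the assumption $\epsilon<m_p$ made when the lemma is invoked in the proof of Theorem~\ref{teo3.1} show that the expected argument is exactly your synthesis-bound-plus-triangle-inequality computation with $\epsilon_0=m_p$, $m_p'=m_p-\epsilon$, and $M_p'=\|\Phi\|_{(\mathit{W}^{1})^{(r)}}+\epsilon$. Your point of care is also the right one: since Proposition~\ref{prop4.2} is stated only for generators in $(\mathit{W}_{0}^{1})^{(r)}$, one does need the observation that the proof of \eqref{4.2.4} never uses continuity, so that it applies to the (generally discontinuous) generators $\Theta$ and $\Phi-\Theta$ in $(\mathit{W}^{1})^{(r)}$.
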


\subsection{Proofs for Section 3.1.}\

Now we are ready to prove the first of our main results.

\medskip
%PROOF THEOREM 3.1
{\bf Proof of Theorem \ref{teo3.1}.}
\begin{proof}
Assume that $\overrightarrow{\mu}\in(\mathcal{M}(\mathbb{R}^d))^{(t)}$,
$\Phi\in(\mathit{W}_{0}^{1})^{(r)}$ satisfies (\ref{2.1.1}), and
$X=\{x_{j},j\in J\}\subset\mathbb{R}^d$ satisfies \eqref{1.2}.
 We want to find
$\epsilon_{0}>0$ such that whenever
$\|\Phi-\Theta\|_{(\mathit{W}^{1})^{(r)}}\leq\epsilon<\epsilon_{0}$,
then (\ref{3.1.2}) takes place for some $0<A_{p}^{'} \leq
B_{p}^{'}<\infty.$ Assume $0<\epsilon<m_{p}$. Then, by Lemma
\ref{lem4.3},  $\Theta\in(\mathit{W}^{1})^{(r)}$ satisfies
(\ref{2.1.1}) and we can use representations
$g=\sum_{k\in\mathbb{Z}^d}C_{k}^{T}\Theta_{k}$ and
$f=\sum_{k\in\mathbb{Z}^d}C_{k}^{T}\Phi_{k}$,
$C\in(\ell^{p}(\mathbb{Z}^d))^{(r)}$. Consequently, we have
\begin{eqnarray}
\frac{1}{M_{p}^{'}}\|g\|_{L^{p}}&\leq&\|C\|_{(\ell^{p}(\mathbb{Z}^d))^{(r)}}\leq\frac{1}{m_{p}}\|\sum_{k\in\mathbb{Z}^d}C_{k}^{T}\Phi_{k}\|_{L^{p}}=\frac{1}{m_{p}}\|f\|_{L^{p}}\nonumber\\
&\leq&\frac{A_{p}^{-1}}{m_{p}}\|(f\ast\overrightarrow{\mu})(X)\|_{(\ell^{p}(J))^{(t)}}\nonumber\\
&=&\frac{A_{p}^{-1}}{m_{p}}\norm{\left(\left(\sum_{k\in\mathbb{Z}^d}C_{k}^{T}\Phi_{k}\right)\ast\overrightarrow{\mu}\right)(X)}_{(\ell^{p}(J))^{(t)}}\nonumber\\
&=&\frac{A_{p}^{-1}}{m_{p}}\sum_{l=1}^{t}\norm{\left(\left(\sum_{k\in\mathbb{Z}^d}C_{k}^{T}\Phi_{k}\right)\ast\mu^{l}\right)(X)}_{\ell^{p}(J)}\nonumber\\
&\leq&\frac{A_{p}^{-1}}{m_{p}}\sum_{l=1}^{t}\norm{\left(\sum_{k\in\mathbb{Z}^d}C_{k}^{T}\Xi_{k}^{l}\right)(X)}_{\ell^{p}(J)}\nonumber\\
&+&\frac{A_{p}^{-1}}{m_{p}}\norm{\left(g\ast\overrightarrow{\mu}\right)(X)}_{(\ell^{p}(J))^{(t)}},\nonumber
\end{eqnarray}
where
\begin{equation}\label{3.1.3}
\Xi_{k}^{l}:=((\phi_{k}^{1}-\theta_{k}^{1})\ast\mu^{l},\ldots,(\phi_{k}^{r}-\theta_{k}^{r})\ast\mu^{l}),\quad
l=1,\ldots,t.
\end{equation}
Since $\Phi$ and $\Theta$ are elements of
$(\mathit{W}_{0}^{1})^{(r)}$ and
$\overrightarrow{\mu}\in(\mathcal{M}(\mathbb{R}^d))^{(t)}$, then
by %proposition \ref{prop4.2}
(\ref{4.2.1}), we have
$\Xi^{l}=(\Phi-\Theta)\ast\mu^{l}\in(\mathit{W}_{0}^{1})^{(r)}$,
for $l=1,\ldots,t$. Hence, using %proposition \ref{prop4.2}
(\ref{4.2.2}), (\ref{4.2.3}) and condition
(\ref{2.1.1}) for $g=\sum_{k\in\mathbb{Z}^d}C_{k}^{T}\Theta_{k}$,
we have
\begin{eqnarray}
&\sum_{l=1}^{t}\norm{(\sum_{k\in\mathbb{Z}^d}C_{k}^{T}\Xi_{k}^{l})(X)}_{\ell^{p}(J)} \leq \nonumber\\
&2^{d}\mathcal{N}\|C\|_{(\ell^{p}(\mathbb{Z}^d))^{(r)}}\|\Phi-\Theta\|_{(\mathit{W}^{1})^{(r)}}\|\overrightarrow{\mu}\|_{(\mathcal{M}(\mathbb{R}^d))^{(t)}}\leq \nonumber\\
&\frac{2^{d}\mathcal{N}\|\Phi-\Theta\|_{(\mathit{W}^{1})^{(r)}}\|\overrightarrow{\mu}\|_{(\mathcal{M}(\mathbb{R}^d))^{(t)}}}{m_{p}^{'}}\|g\|_{L^{p}}.\nonumber
\end{eqnarray}
Therefore,
\begin{eqnarray}
\frac{1}{M_{p}^{'}}\|g\|_{L^{p}}&\leq&\frac{A_{p}^{-1}2^{d}\mathcal{N}\|\Phi-\Theta\|_{(\mathcal{W}^{1})^{(r)}}\|\overrightarrow{\mu}\|_{(\mathcal{M}(\mathbb{R}^d))^{(t)}}}{m_{p}m_{p}^{'}}\|g\|_{L^{p}}+\nonumber\\
&+&\frac{A_{p}^{-1}}{m_{p}}\|(g\ast\overrightarrow{\mu})(X)\|_{(\ell^{p}(J))^{(t)}}.\nonumber
\end{eqnarray}
Hence,
\begin{equation}\label{3.1.4}
\bs
\left( \frac{A_{p}m_{p}}{M_{p}^{'}} \right. & \left. -  \frac{2^{d}\mathcal{N}\|\Phi-\Theta\|_{(\mathit{W}^{1})^{(r)}}
\|\overrightarrow{\mu}\|_{(\mathcal{M}(\mathbb{R}^d))^{(t)}}}{m_{p}^{'}} \right) \|g\|_{L^{p}} \\
& \leq  \|(g\ast\overrightarrow{\mu})(X)\|_{(\ell^{p}(J))^{(t)}}.
\end{split}
\end{equation}
On the other hand, since  $\Theta\in(\mathit{W}_{0}^{1})^{(r)}$
and $\overrightarrow{\mu}\in(\mathcal{M}(\mathbb{R}^d))^{(t)}$, it
follows from %proposition \ref{prop4.2}
(\ref{4.2.1}) that
$(\theta^{1}\ast\mu^{l},\ldots,\theta^{r}\ast\mu^{l})\in(\mathit{W}_{0}^{1})^{(r)}$,
$l=1,\ldots,t$. Therefore, %proposition \ref{prop4.2}
(\ref{4.2.4}), (\ref{4.2.5}) and the first of the estimates in (\ref{4.3.3}) imply that
\begin{eqnarray}
\|(g\ast\overrightarrow{\mu})(X)\|_{(\ell^{p}(J))^{(t)}}&=&\|((\sum_{k\in\mathbb{Z}^d}C_{k}^{T}\Theta_{k})\ast\overrightarrow{\mu})(X)\|_{(\ell^{p}(J))^{(t)}}\nonumber\\
&\leq&\mathcal{N}\|((\sum_{k\in\mathbb{Z}^d}C_{k}^{T}\Theta_{k})\ast\overrightarrow{\mu})\|_{(\mathit{W}^{p})^{(r)}}\nonumber\\
&\leq&2^{d}\mathcal{N}\|\overrightarrow{\mu}\|_{(\mathcal{M}(\mathbb{R}^d))^{(t)}}\|\sum_{k\in\mathbb{Z}^d}C_{k}^{T}\Theta_{k}\|_{(\mathit{W}^{p})^{(r)}}\nonumber\\
&\leq&2^{d}\mathcal{N}\|\overrightarrow{\mu}\|_{(\mathcal{M}(\mathbb{R}^d))^{(t)}}\|C\|_{(\ell^{p}(\mathbb{Z}^d))^{(r)}}\|\Theta\|_{(\mathit{W}^{1})^{(r)}}\nonumber\\
&\leq&\frac{2^{d}\mathcal{N}\|\overrightarrow{\mu}\|_{(\mathcal{M}(\mathbb{R}^d))^{(t)}}}{m_{p}^{'}}(\|\Phi\|_{(\mathit{W}^{1})^{(r)}}+\epsilon)\|g\|_{L^{p}}\nonumber\\
&\leq&\frac{2^{d}\mathcal{N}\|\overrightarrow{\mu}\|_{(\mathcal{M}(\mathbb{R}^d))^{(t)}}(\|\Phi\|_{(\mathit{W}^{1})^{(r)}}+\epsilon)}{m_{p}-\epsilon}\|g\|_{L^{p}}.\nonumber
\end{eqnarray}
Hence,
\begin{equation}\label{3.1.5}
\|(g\ast\overrightarrow{\mu})(X)\|_{(\ell^{p}(J))^{(t)}}\leq\left(\frac{2^{d}\mathcal{N}\|\overrightarrow{\mu}\|_{(\mathcal{M}(\mathbb{R}^d))^{(t)}}(\|\Phi\|_{(\mathit{W}^{1})^{(r)}}+\epsilon)}{m_{p}-\epsilon}\right)\|g\|_{L^{p}}.
\end{equation}
Using the estimates (\ref{4.3.3}) and the left hand side of the
inequality (\ref{3.1.4}), we can obtain an explicit upper bound $\epsilon_0$
for $\epsilon$ from
\begin{displaymath}
\frac{A_{p}m_{p}}{\|\Phi\|_{(\mathit{W}^{1})^{(r)}}+\epsilon}-\frac{2^{d}\mathcal{N}\|\overrightarrow{\mu}\|_{(\mathcal{M}(\mathbb{R}^d))^{(t)}}}{m_{p}-\epsilon}\epsilon=0.
\end{displaymath}
This is equivalent to the quadratic equation
\[
\epsilon^{2} +C_{p}\epsilon
-\frac{A_{p}m_{p}^{2}}{2^{d}\mathcal{N}\|\overrightarrow{\mu}\|_{(\mathcal{M}(\mathbb{R}^d))^{(t)}}}=0,\]
where
\[ C_{p}=\|\Phi\|_{(\mathit{W}^{1})^{(r)}}
+\frac{A_{p}m_{p}}{2^{d}\mathcal{N}\|\overrightarrow{\mu}\|_{(\mathcal{M}(\mathbb{R}^d))^{(t)}}}.\]
%\end{equation}
Let $\epsilon_{0}$ be the positive solution of the previous equation, i.e.,
\begin{displaymath}
\epsilon_{0}=\frac{1}{2}\left(\sqrt{C_{p}^{2}+\frac{4A_{p}m_{p}^{2}}{2^{d}\mathcal{N}\|\overrightarrow{\mu}\|_{(\mathcal{M}(\mathbb{R}^d))^{(t)}}}}-C_{p}\right).
\end{displaymath}
Then, for $0<\epsilon<\epsilon_{0}<m_p$, we  use (\ref{3.1.4}),
(\ref{3.1.5}), and (\ref{4.3.3}) to obtain
\begin{displaymath}
A_{p}^{'}=\frac{A_{p}m_{p}}{\|\Phi\|_{(\mathit{W}^{1})^{(r)}}+\epsilon}-\frac{2^{d}\mathcal{N}\|\overrightarrow{\mu}\|_{(\mathcal{M}(\mathbb{R}^d))^{(t)}}}{m_{p}-\epsilon}\epsilon,%\quad\textrm{and}
\end{displaymath}
\begin{displaymath}
B_{p}^{'}=\frac{2^{d}\mathcal{N}\|\overrightarrow{\mu}\|_{(\mathcal{M}(\mathbb{R}^d))^{(t)}}(\|\Phi\|_{(\mathit{W}^{1})^{(r)}}+\epsilon)}{m_{p}-\epsilon},
\end{displaymath}
and the proof is complete.
\end{proof}

%PROOF COROLLARY 3.4
%{\bf Proof of corollary \ref{coro3.4}.}
%\begin{proof} Corollary \ref{coro3.4} is
%an immediately consequence of theorem \ref{teo3.1}, because in
%this case the vector of measures
%$\overrightarrow{\mu}=(\mu^{1},\ldots,\mu^{t})\in
%(\mathcal{M}(\mathbb{R}^d))^{(t)}$ is given by
%$d\mu^{j}=\psi^{j}dx$, $j=1,2,\ldots,t$, where $\psi^{j}\in
%L^1(\mathbb{R}^d)$, for $j=1,2,\ldots,t$, and $dx$ is the Lebesgue
%measure in $\mathbb{R}^d$. Note that in this case we have:
%\begin{displaymath}
%\|\overrightarrow{\mu}\|_{(\mathcal{M}(\mathbb{R}^d))^{(t)}}=\|\Psi\|_{(L^1(\mathbb{R}^d))^{(t)}}\quad\textrm{where}\quad\Psi=(\psi^{1},\ldots,\psi^{t}).
%\end{displaymath}
%\end{proof}

%PROOF COROLLARY 3.5
%{\bf Proof of corollary \ref{coro3.5}.}
%\begin{proof} In the proof of this
%corollary, we let $t=1$, and $\mu=\mu^{1}=\delta_{0}$, i.e., $\mu$
%is the Dirac measure in $\mathbb{R}^d$ at zero. Clearly,
%$\|\mu\|=\|\delta_{0}\|=1$, and by theorem \ref{teo3.1} the
%conclusion follows.
%\end{proof}

%PROOF THEOREM 3.3
{\bf Proof of Theorem \ref{teo3.3}.}
\begin{proof} Let $f =\sum_{k\in\mathbb{Z}^d}C_{k}^{T}\Phi_{k}\in V^{p}(\Phi)$,
%be given. Using the representation
%$f$,
$C\in(\ell^{p}(\mathbb{Z}^d))^{(r)}$. We have
\begin{eqnarray}
A_{p}\|f\|_{L^{p}}&\leq&\|(f\ast\overrightarrow{\mu})(X)\|_{(\ell^{p}(J))^{(t)}}\nonumber\\
&\leq&\|(f\ast(\overrightarrow{\mu}-\overrightarrow{\alpha}))(X)\|_{(\ell^{p}(J))^{(t)}}+\|(f\ast\overrightarrow{\alpha})(X)\|_{(\ell^{p}(J))^{(t)}}\nonumber\\
&=&\sum_{l=1}^{t}\|(f\ast(\mu^{l}-\alpha^{l}))(X)\|_{\ell^{p}(J)}+\|(f\ast\overrightarrow{\alpha})(X)\|_{(\ell^{p}(J)^{(t)}}\nonumber\\
&=&\sum_{l=1}^{t}\|((\sum_{k\in\mathbb{Z}^d}C_{k}^{T}\Phi_{k})\ast(\mu^{l}-\alpha^{l}))(X)\|_{\ell^{p}(J)}+\|(f\ast\overrightarrow{\alpha})(X)\|_{(\ell^{p}(J))^{(t)}}.\nonumber
\end{eqnarray}
Since $\overrightarrow{\mu}$ and $\overrightarrow{\alpha}$ are in
$(\mathcal{M}(\mathbb{R}^d))^{(t)}$, and
$\Phi\in(\mathit{W}_{0}^{1})^{(r)}$, then Proposition
\ref{prop4.2} implies
$\Omega^{l}=(\phi^{1}\ast(\mu^{l}-\alpha^{l}),\ldots,\phi^{r}\ast(\mu^{l}-\alpha^{l}))\in(\mathit{W}_{0}^{1})^{(r)}$,
for $l=1,\ldots,t$. Using Proposition \ref{prop4.2} once again  we
have:
\begin{eqnarray}
A_{p}\|f\|_{L^{p}}&\leq&\sum_{l=1}^{t}\mathcal{N}\|C\|_{(\ell^{p}(\mathbb{Z}^d))^{(r)}}\|\Omega^{l}\|_{(\mathit{W}^{1})^{(r)}}+ \|(f\ast\overrightarrow{\alpha})(X)\|_{(\ell^{p}(J))^{(t)}}\nonumber\\
&\leq&2^{d}\mathcal{N}\|C\|_{(\ell^{p}(\mathbb{Z}^d))^{(r)}}\|\Phi\|_{(\mathit{W}^{1})^{(r)}}
\|\overrightarrow{\mu}-\overrightarrow{\alpha}\|_{(\mathcal{M}(\mathbb{R}^d))^{(t)}}+\|(f\ast\overrightarrow{\alpha})(X)\|_{(\ell^{p}(J))^{(t)}}.\nonumber
\end{eqnarray}
Taking into account $\Phi\in(\mathit{W}^{1})^{(r)}$ also satisfies
(\ref{2.1.1}), and $f$ satisfies (\ref{1.2}), then it follows
\begin{equation}
2^{d}\mathcal{N}\|C\|_{(\ell^{p}(\mathbb{Z}^d))^{(r)}}\|\Phi\|_{(\mathit{W}^{1})^{(r)}}\|\overrightarrow{\mu}-\overrightarrow{\alpha}\|_{(\mathcal{M}(\mathbb{R}^d))^{(t)}}\nonumber\\
\leq
\frac{2^{d}\mathcal{N}\|f\|_{L^{p}}\|\Phi\|_{(\mathit{W}^{1})^{(r)}}\|\overrightarrow{\mu}-\overrightarrow{\alpha}\|_{(\mathcal{M}(\mathbb{R}^d))^{(t)}}}{m_{p}}.\nonumber
\end{equation}

Hence,
\begin{equation}\label{3.3.2}
\left(A_{p}-\frac{2^{d}\mathcal{N}\|\Phi\|_{(\mathit{W}^{1})^{(r)}}
\|\overrightarrow{\mu}-\overrightarrow{\alpha}\|_{(\mathcal{M}(\mathbb{R}^d))^{(t)}}}{m_{p}}\right)\|f\|_{L^{p}}\leq
\|(f\ast\overrightarrow{\alpha})(X)\|_{(\ell^{p}(J))^{(t)}}.
\end{equation}
On the other hand, since $f \in V^{p}(\Phi)$ satisfies
(\ref{1.2}), we have
\begin{eqnarray}
\|(f\ast\overrightarrow{\alpha})(X)\|_{(\ell^{p}(J))^{(t)}}&\leq&\|(f\ast(\overrightarrow{\alpha}-\overrightarrow{\mu}))(X)\|_{(\ell^{p}(J))^{(t)}}+\|(f\ast\overrightarrow{\mu})(X)\|_{(\ell^{p}(J))^{(t)}}\nonumber\\
&=&\sum_{l=1}^{t}\|(f\ast(\alpha^{l}-\mu^{l}))(X)\|_{\ell^{p}(J)}+\|(f\ast\overrightarrow{\mu})(X)\|_{(\ell^{p}(J))^{(t)}}\nonumber\\
&\leq&\sum_{l=1}^{t}\|((\sum_{k\in\mathbb{Z}^d}C_{k}^{T}\Phi_{k})\ast(\alpha^{l}-\mu^{l}))(X)\|_{\ell^{p}(J)}+B_{p}\|f\|_{L^{p}}\nonumber\\
&\leq&2^{d}\mathcal{N}\|C\|_{(\ell^{p}(\mathbb{Z}^d))^{(r)}}\|\Phi\|_{(\mathit{W}^{1})^{(r)}}\|\overrightarrow{\mu}-\overrightarrow{\alpha}\|_{(\mathcal{M}(\mathbb{R}^d))^{(t)}}+B_{p}\|f\|_{L^{p}}.\nonumber
\end{eqnarray}
Using condition (\ref{2.1.1}), we obtain:
\begin{equation}\label{3.3.3}
\|(f\ast\overrightarrow{\alpha})(X)\|_{(\ell^{p}(J))^{(t)}}\leq\left(\frac{2^{d}\mathcal{N}\|\Phi\|_{(\mathit{W}^{1})^{(r)}}\|\overrightarrow{\mu}-\overrightarrow{\alpha}\|_{(\mathcal{M}(\mathbb{R}^d))^{(t)}}}{m_{p}}+B_{p}\right)\|f\|_{L^{p}}.
\end{equation}
From (\ref{3.3.2}) and (\ref{3.3.3}), by choosing
\begin{displaymath}
\epsilon_{0}=\frac{A_{p}m_{p}}{2^{d}\mathcal{N}\|\Phi\|_{(\mathit{W}^{1})^{(r)}}},
\end{displaymath}
we obtain for
$0<\epsilon<\epsilon_{0}$,
\begin{displaymath}
A_{p}^{'}=A_{p}-\frac{2^{d}\mathcal{N}\|\Phi\|_{(\mathit{W}^{1})^{(r)}}}{m_{p}}\epsilon,\quad\textrm{and}
\end{displaymath}
\begin{displaymath}
B_{p}^{'}=B_{p}+\frac{2^{d}\mathcal{N}\|\Phi\|_{(\mathit{W}^{1})^{(r)}}}{m_{p}}\epsilon.
\end{displaymath}
\end{proof}

%%PROOF OF COROLLARY 3.6
%{\bf Proof of corollary \ref{coro3.6}.}
%\begin{proof} Let
%$\Psi\in(L^1(\mathbb{R}^d))^{(t)}$ be given, and define the
%measure by $d\overrightarrow{\mu}=\Psi dx$, where $dx$ is the
%Lebesgue measure in $\mathbb{R}^d$. Using the hypothesis and the
%fact $\Psi$ satisfies (\ref{3.6.1}) %(which is equivalent to
%%(\ref{1.2}) %in theorem \ref{teo3.3}),
%then by theorem
%\ref{teo3.3} the thesis of corollary \ref{coro3.6} follows if we
%define the measure $d\overrightarrow{\alpha}=\Gamma dx$, where
%$\Gamma\in(L^1(\mathbb{R}^d))^{(t)}$, and taking into account
%$\|\Psi-\Gamma\|_{(L^1(\mathbb{R}^d))^{(t)}}=\|\overrightarrow{\mu}-\overrightarrow{\alpha}\|_{(\mathcal{M}(\mathbb{R}^d))^{(t)}}.$
%\end{proof}

%PROOF OF THEOREM 3.4.
{\bf Proof of theorem \ref{teo3.4}.}

The conclusion of the theorem is essentially obvious at this point.
We proceed with a formal proof in order to obtain estimates for
$\epsilon_0$ and the bounds $A^{'}_{p}$ and $B^{'}_{p}$ of $X$ as an
$\overrightarrow{\alpha}$-sampling set for $V^p(\Theta)$.

\begin{proof} Let
$0<\epsilon_{1}<\frac{1}{2}\left(\sqrt{C_{p}^{2}+\frac{4A_{p}m_{p}^{2}}{2^{d}\mathcal{N}\|\overrightarrow{\mu}\|_{(\mathcal{M}(\mathbb{R}^d))^{(t)}}}}-C_{p}\right)$,
where
\begin{displaymath}
C_{p}=\|\Phi\|_{(\mathit{W}^{1})^{(r)}}+\frac{A_{p}m_{p}}{2^{d}\mathcal{N}\|\overrightarrow{\mu}\|_{(\mathcal{M}(\mathbb{R}^d))^{(t)}}}.
\end{displaymath}
Then, by Theorem \ref{teo3.1}, $X$ is a $\overrightarrow{\mu}$-sampling set for $V^{p}(\Theta)$ %with respect to $\overrightarrow{\mu}$,
as soon as
\begin{displaymath}
\|\Phi-\Theta\|_{(\mathit{W}^{1})^{(r)}}\leq\epsilon_{1}. %<\frac{1}{2}\left(\sqrt{C_{p}^{2}+\frac{4A_{p}m_{p}^{2}}{2^{d}\mathcal{N}}}-C_{p}\right),
\end{displaymath}
%$\mathcal{N}$ and $m_{p}$ are as above, and
Moreover,
\begin{displaymath}
A^{''}_{p}\|g\|_{L^{p}}\leq\|(g\ast\overrightarrow{\mu})(X)\|_{(\ell^{p}(J))^{(t)}}\leq
B^{''}_{p}\|g\|_{L^{p}},\mbox{ for all } g\in V^{p}(\Theta),
\end{displaymath}
where %, if $0<\epsilon<\epsilon_1$,
\begin{displaymath}
A^{''}_{p}=\frac{A_{p}m_{p}}{\|\Phi\|_{(\mathit{W}^{1})^{(r)}}+\epsilon_1}-\frac{2^{d}\mathcal{N}\|\overrightarrow{\mu}\|_{(\mathcal{M}(\mathbb{R}^d))^{(t)}}}{m_{p}-\epsilon_1}\epsilon_1
\end{displaymath}
and
\begin{displaymath}
B^{''}_{p}=\frac{2^{d}\mathcal{N}\|\overrightarrow{\mu}\|_{(\mathcal{M}(\mathbb{R}^d))^{(t)}}(\|\Phi\|_{(\mathit{W}^{1})^{(r)}}+\epsilon_1)}{m_{p}-\epsilon_1}.
\end{displaymath}
Assume now that
\begin{displaymath}
0<\epsilon_{2}\leq\frac{A_{p}^{''}(m_{p}-\epsilon_{1})}{2^{d}\mathcal{N}(\|\Phi\|_{(\mathit{W}^{1})^{(r)}}+\epsilon_{1})}.
\end{displaymath}
Then, by Theorem \ref{teo3.3}, $X$ is an $\overrightarrow{\alpha}$-sampling set for
$V^{p}(\Theta)$ as soon as
\[\|\Phi-\Theta\|_{(\mathit{W}^{1})^{(r)}} \leq \epsilon_1 \mbox{ and }
\norm{\overrightarrow{\mu}-\overrightarrow{\alpha}}_{(\mathcal{M}(\mathbb{R}^d))^{(t)}} \leq \epsilon_2.\]
%Moreover
%\begin{displaymath}
%\epsilon_{3}=\frac{1}{2}\left(\sqrt{\|\Phi\|_{(\mathit{W}^{1})^{(r)}}^{2}+\frac{4A_{p}^{''}m_{p}^{2}}{2^{d}\mathcal{N}}}-\|\Phi\|_{(\mathit{W}^{1})^{(r)}}\right).
%\end{displaymath}
%If we take
Hence, if $0<\epsilon<\epsilon_{0}=\min\{\epsilon_{1},\epsilon_{2}\}$, %,\epsilon_{3}\}$,
%then for $0<\epsilon<\epsilon_{0}$, we choose
we obtain the sampling bounds
\begin{displaymath}
A_{p}^{'}=A_{p}^{''}-\frac{2^{d}\mathcal{N}(\|\Phi\|_{(\mathit{W}^{1})^{(r)}}+\epsilon_1)}{m_{p}-\epsilon_1}\epsilon_2,
\end{displaymath}
and
\begin{displaymath}
B_{p}^{'}=B_{p}^{''}+\frac{2^{d}\mathcal{N}(\|\Phi\|_{(\mathit{W}^{1})^{(r)}}+\epsilon_1)}{m_{p}-\epsilon_1}\epsilon_2,
\end{displaymath}
as soon as
\[\|\Phi-\Theta\|_{(\mathit{W}^{1})^{(r)}} +
\norm{\overrightarrow{\mu}-\overrightarrow{\alpha}}_{(\mathcal{M}(\mathbb{R}^d))^{(t)}} \leq \epsilon < \epsilon_0.\]
%Moreover
\end{proof}

%PROOF OF COROLLARY 3.7.
%{\bf Proof of corollary \ref{coro3.7}.}
%\begin{proof}
%Let $\Psi\in(L^1(\mathbb{R}^d))^{(t)}$ and
%$\Phi\in(\mathit{W}_{0}^{1})^{(r)}$ be given. Define the measure
%$d\overrightarrow{\mu}=\Psi dx$, where $dx$ is the Lebesgue
%measure in $\mathbb{R}^d$. For any
%$\Gamma\in(L^1(\mathbb{R}^d))^{(t)}$ let $\overrightarrow{\alpha}$
%be defined by $d\overrightarrow{\alpha}=\Gamma dx$. Then the
%thesis of the corollary follows from theorem \ref{teo3.4} taking
%into account that
%$\|\Psi-\Gamma\|_{(L^1(\mathbb{R}^d))^{(t)}}=\|\overrightarrow{\mu}-\overrightarrow{\alpha}\|_{(\mathcal{M}(\mathbb{R}^d))^{(t)}}.$
%\end{proof}
{\bf Proof of Theorem \ref{teo5.1}.}
%\medskip

%As before, we let $\widetilde{X} = X+\Delta = \{x_j +\delta_j\}_{j\in J}$,
%where $\Delta=\{\delta_{j}\}_{j\in J}\subset\mathbb{R}^d$,
%and $U_{\Delta} = U_{(\widetilde{X},\Phi,\overrightarrow{\mu})}$ be the sampling operator
%that corresponds to $\widetilde{X}$.
%%where $\Delta=\{\delta_{j}\}_{j\in J}\subset\mathbb{R}^d$. Let
%%$U_{\Delta}$ be the linear operator on $(\ell^p(\mathbb{Z}^d))^{(r)}$
%Then $U_{\Delta}C=(f\ast\overrightarrow{\mu})(\widetilde{X})$, $C\in (\ell^p(\mathbb{Z}^d))^{(r)}$,
%where $f = \sum_{k\in\GG} C_k^T\Phi_k$.

The theorem is immediately implied by Lemma \ref{nutshell} and the following result.

%Lemma 5.1
\begin{lem}\label{lem5.1} Let $(X,\Phi,\overrightarrow{\mu})$ be a $p$-stable sampling model for some $p\in[1,\infty]$
and $\widetilde{X} = X+\Delta$. Let $U$ be the sampling operator for $(X,\Phi,\overrightarrow{\mu})$ and
$U_{\Delta}$  be the sampling operator for $(\widetilde{X},\Phi,\overrightarrow{\mu})$.
Then $\|U-U_{\Delta}\|\rightarrow0$ as $\|\Delta\|_{\infty}\rightarrow0$.
\end{lem}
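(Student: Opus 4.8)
The plan is to exploit the block structure $\norm{U-U_\Delta}_{p,op}=\sum_{l=1}^{t}\sum_{i=1}^{r}\norm{U^{i,l}-U_\Delta^{i,l}}$ and to bound each scalar block, the point being that only the sampling set changes. Fix $i,l$ and set $g=\phi^{i}\ast\mu^{l}$; by Lemma \ref{lem4.1} we have $g\in\mathit{W}_0^1$, so $g$ is continuous with $\norm{g}_{\mathit{W}^1}<\infty$. For $c\in\ell^p(\zd)$ write $h=\sum_{k\in\zd}c_k\,g(\cdot-k)$. Then $(U^{i,l}c)_j=h(x_j)$ while $(U_\Delta^{i,l}c)_j=h(x_j+\delta_j)$, so the block difference is governed by the increment $h(x_j)-h(x_j+\delta_j)$ over the separated set $X$.

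The main device is the oscillation $\osc_\eta g(x)=\sup_{|y|\le\eta}|g(x+y)-g(x)|$, which for continuous $g$ is again continuous. With $\eta=\norm{\Delta}_\infty\ge|\delta_j|$ I estimate pointwise
\[
|h(x_j)-h(x_j+\delta_j)|\le\osc_\eta h(x_j)\le\sum_{k\in\zd}|c_k|\,\osc_\eta g(x_j-k).
\]
The right-hand side is a semi-discrete convolution of exactly the shape treated in the proof of \eqref{4.2.4}: putting $b(l)=\operatorname{esssup}_{x\in[0,1]^d}\osc_\eta g(x+l)$ one has $\norm{b}_{\ell^1}=\norm{\osc_\eta g}_{\mathit{W}^1}$, whence Young's inequality gives $\norm{\osc_\eta h}_{\mathit{W}^p}\le\norm{\osc_\eta g}_{\mathit{W}^1}\norm{c}_{\ell^p}$. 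Applying the separated-set estimate \eqref{4.2.5} to the (continuous) function $\osc_\eta h$ then yields
\[
\norm{(U^{i,l}-U_\Delta^{i,l})c}_{\ell^p(J)}\le\norm{(\osc_\eta h)(X)}_{\ell^p(J)}\le\Ndel\,\norm{\osc_\eta h}_{\mathit{W}^p}\le\Ndel\,\norm{\osc_\eta g}_{\mathit{W}^1}\,\norm{c}_{\ell^p},
\]
so that $\norm{U^{i,l}-U_\Delta^{i,l}}\le\Ndel\,\norm{\osc_{\norm{\Delta}_\infty}(\phi^i\ast\mu^l)}_{\mathit{W}^1}$.

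Summing over $i$ and $l$ gives $\norm{U-U_\Delta}\le\Ndel\sum_{l,i}\norm{\osc_{\norm{\Delta}_\infty}(\phi^i\ast\mu^l)}_{\mathit{W}^1}$, and the lemma follows once one knows that $\norm{\osc_\eta g}_{\mathit{W}^1}\to0$ as $\eta\to0$ for every $g\in\mathit{W}_0^1$. I expect this last limit to be the only genuine obstacle: the Young/separated-set bookkeeping is entirely parallel to Proposition \ref{prop4.2}, whereas the vanishing of the amalgam-norm modulus of continuity is the real analytic input. I would prove it by density---approximate $g$ in $\mathit{W}^1$-norm by a compactly supported continuous $g_\epsilon$; since $\osc_\eta$ is subadditive and $\norm{\osc_\eta(g-g_\epsilon)}_{\mathit{W}^1}\le C\norm{g-g_\epsilon}_{\mathit{W}^1}$ with $C$ bounded for $\eta\le1$, while $\norm{\osc_\eta g_\epsilon}_{\mathit{W}^1}\to0$ by uniform continuity on the fixed compact support of $g_\epsilon$, one obtains $\limsup_{\eta\to0}\norm{\osc_\eta g}_{\mathit{W}^1}\le C\norm{g-g_\epsilon}_{\mathit{W}^1}$ and then lets $\epsilon\to0$---or one simply cites this standard property of $\mathit{W}_0^1$ (cf. \cite{aagr1,hgf}). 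Note that separation of $X$ enters only through the constant $\Ndel$ of \eqref{4.2.5}, exactly as one expects.
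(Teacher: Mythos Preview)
Your argument is correct and is essentially the same as the paper's: both proofs work blockwise, bound the matrix entries via the oscillation $\osc_{\|\Delta\|_\infty}(\phi^i\ast\mu^l)$, reduce to a discrete convolution, apply Young's inequality together with the separated-set constant $\Ndel$, and conclude from $\|\osc_\eta g\|_{\mathit{W}^1}\to 0$ for $g\in\mathit{W}_0^1$. The only cosmetic difference is that you route the estimate through the auxiliary function $h$ and its oscillation, whereas the paper computes directly on the matrix entries; for the vanishing amalgam modulus of continuity the paper simply cites \cite[Lemma~8.1]{aagr2}, which is exactly the fact your density sketch establishes.
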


\begin{proof}
We recall that for any $\gamma>0$, the function
$\operatorname{osc}_{\gamma}\,g$ on $\mathbb{R}^d$ is defined by
\begin{displaymath}
\operatorname{osc}_{\gamma}\,g(x)=\sup_{|\Delta
x|<\gamma}|g(x+\Delta x)-g(x)|.
\end{displaymath}
From Lemma 8.1 in \cite{aagr2} it follows that if
$g\in\mathit{W}_{0}^{1}$, then
$\operatorname{osc}_{\gamma}\,g\in\mathit{W}^1$, and
$\|\operatorname{osc}_{\gamma}\,g\|_{\mathit{W}^1}\rightarrow0$ as
$\gamma\rightarrow0$. Therefore, by applying Proposition
\ref{prop4.2} we get
\begin{displaymath}
\operatorname{osc}_{\gamma}\,\Phi\ast\overrightarrow{\mu}\in(\mathit{W}^1)^{(r\times
t)},\ \textrm{and}\ \|\operatorname{osc}_{\gamma}\,\Phi\ast\overrightarrow{\mu}\|_{(\mathit{W}^1)^{(r\times
t)}}\rightarrow0\ \textrm{as}\ \gamma\rightarrow0,
\end{displaymath}
where
\begin{displaymath}
\operatorname{osc}_{\gamma}\,\Phi\ast\overrightarrow{\mu}=\left(\begin{array}{ccc}
\operatorname{osc}_{\gamma}\,\phi^{1}\ast\mu^{1}&
\ldots&\operatorname{osc}_{\gamma}\,\phi^{1}\ast\mu^{t}\\
\vdots&&\vdots\\
\operatorname{osc}_{\gamma}\,\phi^{r}\ast\mu^{1}&\ldots&\operatorname{osc}_{\gamma}\,\phi^{r}\ast\mu^{t}
\end{array}\right).
\end{displaymath}
For any $m\in\mathbb{Z}^d$ there exist at most
$([\delta^{-1}\sqrt{d}]+1)^{d}$ sampling points in every hypercube %of volume 1
$[0,1]^{d}+m$. We set
$X_{m}=X\bigcap([0,1]^{d}+m)$, $m\in\mathbb{Z}^d$, and, for each $1\leq i\leq r$ and $1\leq
l\leq t$, define the sequence
\begin{displaymath}
b^{i,l}(m):=\operatorname{esssup}_{x\in[0,1]^{d}}\,\{\operatorname{osc}_{\|\Delta\|_{\infty}}\,(\phi^{i}\ast\mu^{l})(x+m)\},\quad
m\in\mathbb{Z}^d.
\end{displaymath}
Then
$\|b^{i,l}\|_{\ell^1(\mathbb{Z}^d)}=\|\operatorname{osc}_{\|\Delta\|_{\infty}}\,(\phi^{i}\ast\mu^{l})\|_{\mathit{W}^{1}}$ and,
hence, \[\|b\|_{(\ell^1(\mathbb{Z}^d))^{(r \times
t)}}=\|\operatorname{osc}_{\|\Delta\|_{\infty}}\,\Phi\ast\overrightarrow{\mu}\|_{(\mathit{W}^{1})^{(r
\times t)}}.\]
 For $1\leq i\leq r$ and $1\leq l\leq t$ we have
\begin{eqnarray}
\|(U^{i,l}-U_{\Delta}^{i,l})c^{i}\|_{\ell^p(J)}^{p}&=&\sum_{x_{j}\in
X}\left|\sum_{k\in\mathbb{Z}^d}c_{k}^{i}\left((\phi^{i}\ast\mu^{l})(x_{j}-k))-(\phi^{i}\ast\mu^{l})(x_{j}+\delta_{j}-k)\right)\right|^{p}\nonumber\\
&\leq&\sum_{x_{j}\in X}\left(\sum_{k\in\mathbb{Z}^d}|c_{k}^{i}|\operatorname{osc}_{\|\Delta\|_{\infty}}\,(\phi^{i}\ast\mu^{l})(x_{j}-k)\right)^{p}\nonumber\\
&\leq&\sum_{m\in\mathbb{Z}^d}\mathcal{N}^{p}\left(\sum_{k\in\mathbb{Z}^d}|c_{k}^{i}|b^{i,l}(m-k)\right)^{p}\nonumber\\
&=&\mathcal{N}^{p}\||c^{i}|\ast
b^{i,l}\|_{\ell^p(\mathbb{Z}^d)}^{p},\nonumber
\end{eqnarray}
where $\mathcal{N}=(\delta^{-1}\sqrt{d}+1)^{d/p}$. By using
Young's inequality we obtain
\begin{eqnarray}
\mathcal{N}^{p}\||c^{i}|\ast b^{i,l}\|_{\ell^p(\mathbb{Z}^d)}^{p}&\leq&\mathcal{N}^{p}\|c^{i}\|_{\ell^p(\mathbb{Z}^d)}^{p}\|b^{i,l}\|_{l^1}^{p}\nonumber\\
&=&\mathcal{N}^{p}\|c^{i}\|_{\ell^p(\mathbb{Z}^d)}^{p}\|\operatorname{osc}_{\|\Delta\|_{\infty}}\,\phi^{i}\ast\mu^{l}\|_{\mathit{W}^{1}}^{p}.\nonumber
\end{eqnarray}
Consequently,
\begin{displaymath}
\|U^{i,l}-U_{\Delta}^{i,l}\|\leq
\mathcal{N}\|\operatorname{osc}_{\|\Delta\|_{\infty}}\,\phi^{i}\ast\mu^{l}\|_{\mathit{W}^{1}}.
\end{displaymath}
Hence,
\begin{displaymath}
\|U-U_{\Delta}\|\leq
\mathcal{N}\|\operatorname{osc}_{\|\Delta\|_{\infty}}\,\Phi\ast\overrightarrow{\mu}\|_{(\mathit{W}^{1})^{(r
\times t)}} \rightarrow0\quad\textrm{as}\quad\|\Delta\|_{\infty}
\rightarrow0,
\end{displaymath}
and the lemma is proved.
\end{proof}

{\bf Proof of Theorem \ref{bigshell}.}

\bpf
The proof of Theorem \ref{bigshell} is hidden in the proofs of Theorems \ref{teo3.1}, \ref{teo3.3}, and \ref{teo5.1}.
In particular, keeping the notation of the proof of Theorem \ref{teo3.1}, we have
\[\norm{((f-g)\ast\overrightarrow{\mu})(X)}_{(\lp(J))^{(t)}} \leq
2^{d}\mathcal{N}\|C\|_{(\ell^{p}(\mathbb{Z}^d))^{(r)}}\|\Phi-\Theta\|_{(\mathit{W}^{1})^{(r)}}\|\overrightarrow{\mu}\|_{(\mathcal{M}(\mathbb{R}^d))^{(t)}}.
\]
Hence, Theorem \ref{bigshell} is true, when $\overrightarrow{\mu} = \overrightarrow{\alpha}$ and $X = X +\Delta$.
Keeping the notation of the proof of Theorem \ref{teo3.3}, we have
\[
\|(f\ast(\overrightarrow{\alpha}-\overrightarrow{\mu}))(X)\|_{(\ell^{p}(J))^{(t)}}
\leq 2^{d}\mathcal{N}\|C\|_{(\ell^{p}(\mathbb{Z}^d))^{(r)}}\|\Phi\|_{(\mathit{W}^{1})^{(r)}}\|\overrightarrow{\mu}-\overrightarrow{\alpha}\|_{(\mathcal{M}(\mathbb{R}^d))^{(t)}}.
\]
This inequality implies Theorem \ref{bigshell} when $\Phi = \Theta$ and $X = X+\Delta$.
Combining these results with  Theorem \ref{teo5.1} via the standard $\epsilon/3$ argument we prove the general case.
\epf

\subsection{Proofs for Section 3.2.}\

We begin with an auxiliary technical  result for the convolution of functions with measures.

%Lemma4.4
\begin{lem}\label{lem4.4} Let $\Phi=(\phi^{1},\ldots,\phi^{r})^{T}$ be a vector of continuous functions, $s > d$, and
$\overrightarrow{\mu}\in(\mathcal{M}_{s}(\mathbb{R}^d))^{(t)}$. % be given.
If
%\begin{displaymath}
$|\phi^{i}(x)|\leq C_{0}^{i}(1+|x|)^{-s}$ for all $1\leq i\leq r$,
%\end{displaymath}
then
\begin{displaymath}
|(\Phi\ast\overrightarrow{\mu})(x)|\leq C_{1}(1+|x|)^{-s};
\end{displaymath}
%where
the constants $C_{0}^{i}>0$, $1\leq i\leq r$, and $C_{1}>0$ are independent of
$x\in\mathbb{R}^d$.
\end{lem}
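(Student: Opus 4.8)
The plan is to reduce the matrix estimate to a scalar pointwise bound on each entry $\phi^i\ast\mu^l$ and then sum. Recall that the matrix $\Phi\ast\overrightarrow{\mu}$ has entries $(\phi^i\ast\mu^l)(x)=\int_{\mathbb{R}^d}\phi^i(x-y)\,d\mu^l(y)$, and that the symbol $|(\Phi\ast\overrightarrow{\mu})(x)|$ denotes $\sum_{i=1}^r\sum_{l=1}^t|(\phi^i\ast\mu^l)(x)|$, in keeping with the convention used in Definition~\ref{def3.2}. Thus it suffices to prove a bound of the form $|(\phi^i\ast\mu^l)(x)|\le c_{i,l}(1+|x|)^{-s}$ for each pair $(i,l)$, with $c_{i,l}$ independent of $x$; summing over $i$ and $l$ then yields the claim with $C_1=\sum_{i,l}c_{i,l}$.

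For a fixed pair $(i,l)$, I would first move the absolute value inside the integral against the total variation and invoke the hypothesis on $\phi^i$:
\[
|(\phi^i\ast\mu^l)(x)|\le\int_{\mathbb{R}^d}|\phi^i(x-y)|\,d|\mu^l|(y)\le C_0^i\int_{\mathbb{R}^d}(1+|x-y|)^{-s}\,d|\mu^l|(y).
\]
The heart of the argument is then the elementary Peetre-type inequality $1+|x|\le(1+|x-y|)(1+|y|)$, which follows from the triangle inequality $|x|\le|x-y|+|y|$ together with $(1+a)(1+b)\ge 1+a+b$ for $a,b\ge0$. Raising to the power $s>0$ and rearranging gives $(1+|x-y|)^{-s}\le(1+|x|)^{-s}(1+|y|)^s$ for every $y\in\mathbb{R}^d$.

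Substituting this bound under the integral sign and pulling the $x$-dependent factor out yields
\[
|(\phi^i\ast\mu^l)(x)|\le C_0^i\,(1+|x|)^{-s}\int_{\mathbb{R}^d}(1+|y|)^s\,d|\mu^l|(y),
\]
and the remaining integral is precisely the finite quantity guaranteed by the hypothesis $\mu^l\in\mathcal{M}_s(\mathbb{R}^d)$. Thus $c_{i,l}=C_0^i\int_{\mathbb{R}^d}(1+|y|)^s\,d|\mu^l|(y)$ works, and summing produces the explicit constant $C_1=\bigl(\sum_{i=1}^r C_0^i\bigr)\bigl(\sum_{l=1}^t\int_{\mathbb{R}^d}(1+|y|)^s\,d|\mu^l|(y)\bigr)$.

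I do not anticipate a serious obstacle: the only quantitative input is the moment bound on the measures, and I note that the finiteness comes entirely from $\mu^l\in\mathcal{M}_s$ rather than from $s>d$ (the latter hypothesis is what matters for the localization results that \emph{use} this lemma, not for the estimate itself). The one point requiring a little care is the justification that the absolute value may be moved inside, which is just the standard total-variation estimate $|\int g\,d\mu|\le\int|g|\,d|\mu|$; continuity of the $\phi^i$ and the pointwise decay bound ensure the integrals are well defined and finite at every $x$.
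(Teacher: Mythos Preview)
Your proof is correct and follows essentially the same route as the paper: bound each entry $|(\phi^i\ast\mu^l)(x)|$ via the total-variation integral, apply the Peetre-type inequality $(1+|x-y|)^{-s}\le(1+|x|)^{-s}(1+|y|)^{s}$, use $\mu^l\in\mathcal{M}_s(\mathbb{R}^d)$ to make the remaining integral finite, and sum over $i,l$. Your side remark that the hypothesis $s>d$ is not actually used in the estimate itself is also correct.
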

%Proof of Lemma4.4
\begin{proof}
For $1\leq i\leq r$ and $1\leq j\leq t$ we have
\begin{eqnarray}
|(\phi^{i}\ast\mu^{j})(x)|&\leq&\int_{\mathbb{R}^d}|\phi^{i}(x-y)|d|\mu^{j}|(y)\nonumber \\
&\leq&C_{0}^{i}\int_{\mathbb{R}^d}(1+|x-y|)^{-s}d|\mu^{j}|(y).\nonumber\
\end{eqnarray}
Since $(1+|u+w|)^{-l}\leq
(1+|u|)^{l}(1+|w|)^{-l}$, for all $u, w\in\mathbb{R}^d$, and
$l\geq0$, %then by choosing $u=-y$, $w=x$, and $l=r_{0}$ in the above inequality,
we have
\begin{eqnarray}
|(\phi^{i}\ast\mu^{j})(x)|&\leq&C_{0}^{i}\int_{\mathbb{R}^d}(1+|y|)^{s}(1+|x|)^{-s}d|\mu^{j}|(y)\nonumber\\
&=&C_{0}^{i}(1+|x|)^{-s}\int_{\mathbb{R}^d}(1+|y|)^{s}d|\mu^{j}|(y)\nonumber\\
&\leq& C_{1}^{i,j}(1+|x|)^{-s}\nonumber,
\end{eqnarray}
where the last inequality follows from $\mu^j\in\mathcal M_s(\RR^d)$.
%Since $S_{\mu^{s}}$ is compact, where $S_{\mu^{s}}$ denotes the
%support of the measure $\mu^{s}$, then so is $S_{|\mu^{s}|}$. Hence,
%\begin{eqnarray}
%|(\phi^{i}\ast\mu^{s})(x)|&\leq&C_{0}^{i}(1+|x|)^{-r_{0}}\int_{S_{|\mu^{s}|}}(1+|y|)^{r_{0}}d|\mu^{s}|(y)\nonumber\\
%&\leq&C_{1}^{i,s}(1+|x|)^{-r_{0}},\nonumber\
%\end{eqnarray}
%where $C_{1}^{i,s}:=C_{0}^{i}(\max_{y\in
%S_{|\mu^{s}|}}(1+|y|)^{r_{0}})\|\mu^{s}\|$.
Therefore,
\begin{displaymath}
|(\Phi\ast\overrightarrow{\mu})(x)|\leq C_{1}(1+|x|)^{-s},
\end{displaymath}
where $C_{1}=\sum_{i=1}^{r}\sum_{j=1}^{t}C_{1}^{i,j}$.
\end{proof}

\begin{obs}\label{obs3.4}
If $\{\Phi_{k}\}_{k\in\mathbb{Z}^d}$ is an $s$-localized Riesz generator for $V^2(\Phi)$, as in Definition \ref{sRiesz},
% and the components $\phi^{i}$, $1\leq i\leq r$, of
%$\Phi$ satisfy (\ref{3.8.3}),
then, by Lemma 14(a) in \cite{kg1}, we have that
$\{\widetilde{\Phi}_{k}\}_{k\in\mathbb{Z}^d}$ is also an $s$-localized Riesz generator for $V^2(\Phi)$.
Consequently, by Lemma \ref{lem4.4} we have
\begin{equation}\label{3.8.5}
|(\widetilde{\Phi}\ast\overrightarrow{\mu})(x)|\leq
D_{1}(1+|x|)^{-s},
\end{equation}
for some $D_{1}>0$ independent of $x\in\mathbb{R}^d$.
\end{obs}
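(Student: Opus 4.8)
The plan is to split \eqref{3.8.5} into the two assertions it chains together: first, that the dual generator $\widetilde{\Phi}$ again lies in $\mathcal{W}_{s}$, i.e.\ its components inherit the polynomial decay \eqref{3.8.3}; and second, that once this decay of the components of $\widetilde{\Phi}$ is in hand, the desired estimate on $\widetilde{\Phi}\ast\overrightarrow{\mu}$ is nothing but \lemref{lem4.4} applied with $\widetilde{\Phi}$ in place of $\Phi$.

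First I would record that $\{\widetilde{\Phi}_{k}\}_{k\in\mathbb{Z}^d}$ is automatically a Riesz basis for $V^2(\Phi)$, being the canonical dual of the Riesz basis $\{\Phi_{k}\}$ (Remark~\ref{obs3.2.1}); so the only nontrivial point is the pointwise decay of the components $\widetilde{\phi}^{i}$. To obtain it I would pass to the Gramian picture: the dual generator is produced from $\Phi$ by applying the inverse of the bi-infinite, matrix-valued Gram operator $[\langle\Phi_{k},\Phi_{l}^{T}\rangle]_{k,l}$ attached to $\{\Phi_{k}\}$. The decay hypothesis \eqref{3.8.3}, namely $|\phi^{i}(x)|\leq C_{0}^{i}(1+|x|)^{-s}$ with $s>d$, forces this Gram operator to have polynomially $s$-localized (off-diagonal decaying) entries, while the Riesz condition \eqref{2.1.1} for $p=2$ makes it boundedly invertible on $\ell^2$. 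This is precisely the setting of Jaffard's non-commutative Wiener lemma as packaged in Lemma~14(a) of \cite{kg1}: the inverse operator then inherits the same $s$-localization, whence $\widetilde{\Phi}$ satisfies a bound $|\widetilde{\phi}^{i}(x)|\leq\widetilde{C}_{0}^{i}(1+|x|)^{-s}$ and remains a continuous vector in $(\mathit{W}_{0}^{1})^{(r)}$, so that $\widetilde{\Phi}\in\mathcal{W}_{s}$.

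With the decay of $\widetilde{\Phi}$ established, the second step is immediate. Since $\widetilde{\Phi}=(\widetilde{\phi}^{1},\ldots,\widetilde{\phi}^{r})^{T}$ is a vector of continuous functions obeying $|\widetilde{\phi}^{i}(x)|\leq\widetilde{C}_{0}^{i}(1+|x|)^{-s}$ and $\overrightarrow{\mu}\in(\mathcal{M}_{s}(\mathbb{R}^d))^{(t)}$ with the same $s>d$, \lemref{lem4.4} applies verbatim with $\widetilde{\Phi}$ in the role of $\Phi$ and yields $|(\widetilde{\Phi}\ast\overrightarrow{\mu})(x)|\leq D_{1}(1+|x|)^{-s}$ for some $D_{1}>0$ independent of $x\in\mathbb{R}^d$, which is exactly \eqref{3.8.5}.

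I expect the genuine work to be concentrated entirely in the first step, namely transferring the polynomial decay from $\Phi$ to its dual $\widetilde{\Phi}$; this is precisely where the localization theory of \cite{kg1} is indispensable, since preservation of off-diagonal decay under inversion cannot be obtained by elementary convolution estimates alone. The second step, by contrast, is a routine invocation of \lemref{lem4.4}, and the continuity of $\widetilde{\Phi}$ that it requires comes for free from the same localization input.
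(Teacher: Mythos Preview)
Your proposal is correct and follows exactly the two-step argument the paper gives in the remark itself: invoke Lemma~14(a) of \cite{kg1} to transfer the $s$-localization from $\Phi$ to its dual $\widetilde{\Phi}$, and then apply \lemref{lem4.4} to $\widetilde{\Phi}$ to obtain \eqref{3.8.5}. Your additional unpacking of what Lemma~14(a) entails (the Gramian and Jaffard's lemma) is accurate elaboration of the cited result, not a departure from the paper's approach.
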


%The next proposition shows that a frame algorithm can be used to reconstruct a function from its samples.

%%Proposition4.3
%\begin{prop}\label{prop4.3} Let $\Phi\in(\mathit{W}_{0}^{1})^{(r)}$, $\overrightarrow{\mu}\in(\mathcal{M}(\mathbb{R}^d))^{(t)}$, and $X$ be
% a $\overrightarrow{\mu}$-sampling set for $V^2(\Phi)$.
% Then
%there exists a sequence of vectors of functions
%$\{\Psi_{x_{j}}\}_{j\in J}$, which is a frame for $V^2(\Phi)$
%and $\langle
%f,\Psi_{x_{j}}\rangle=(f\ast\overrightarrow{\mu})(x_{j})$ for all
%$f\in V^2(\Phi)$ and $j\in J$. Moreover, every function $f\in
%V^2(\Phi)$ can be recovered from the sequence of its samples
%$\{(f\ast\overrightarrow{\mu})(x_{j})\}_{j\in J}$ via
%\begin{equation}\label{3.8.5.1}
%f(x)=\sum_{j\in J}(f\ast\overrightarrow{\mu})(x_{j})\widetilde{\Psi}_{x_{j}}(x),
%\end{equation}
%where $\{\widetilde{\Psi}_{x_{j}}\}_{j\in J}$ is a dual frame of
%$\{\Psi_{x_{j}}\}_{j\in J}$ and the series (\ref{3.8.5.1})
%converges unconditionally in $V^2(\Phi)$.
%\end{prop}

{\bf Proof of Proposition \ref{prop4.3}.}

%Proof of Proposition4.3.
\begin{proof}
Let $X$ be a $\overrightarrow{\mu}$-sampling set for  $V^2(\Phi)$,
$\overrightarrow{\mu}\in(\mathcal{M}(\mathbb{R}^d))^{(t)}$.
Then, by definition, there exist constants $0<A_{2}\leq B_{2}<\infty$
such that
\begeq\label{s2}
A_{2}\|f\|_{L^{2}}\leq \|(f\ast\overrightarrow{\mu})(X)\|_{(\ell^{2}(J))^{(t)}}\leq
B_{2}\|f\|_{L^{2}},\mbox{for all } f\in V^{2}(\Phi).
\end{equation}
Fix $x_{j}\in X$. Then, for each $1\leq i\leq t$, the
function $g_{x_{j}}^i$: $V^2(\Phi)\rightarrow\mathbb{C}$ given by
$g_{x_{j}}^i(f)=(f\ast\mu^i)(x_{j})$ is a bounded linear functional
on the closed subspace $V^2(\Phi)$ of $L^2(\mathbb{R}^d)$ because
$|g_{x_{j}}^i(f)|\leq B_{2}\|f\|_{L^{2}}$ for all $f\in V^2(\Phi)$.
Consequently, by Riesz representation theorem, there exists
$\psi_{x_{j}}^i\in V^2(\Phi)$ such that $g_{x_{j}}^i(f)=\langle
f,\psi_{x_{j}}^i\rangle$ for all $f\in V^2(\Phi)$. It follows immediately from \eqref{s2} and Definition \ref{def3.1} that
$\Psi_{x_{j}}=(\psi_{x_{j}}^1,\ldots,\psi_{x_{j}}^t)^{T}$ is a frame for $V^2(\Phi)$.
%, because in this
%case we have that $s_{x_{j}}(f)=\langle f,\Psi_{x_{j}}\rangle$, for
%all $f\in V^2(\Phi)$, and for all $j\in J$.
%The proof of
%(\ref{3.8.5.1}) follows from the fact that $\{\Psi_{x_{j}}\}_{j\in
%J}$ is a frame for $V^2(\Phi)$, and thus
Hence, every $f\in V^2(\Phi)$ can
be recovered via $f=\sum_{j\in J}\langle
f,\Psi_{x_{j}}\rangle\widetilde{\Psi}_{x_{j}}$, where
$\{\widetilde{\Psi}_{x_{j}}=(\widetilde{\psi}_{x_{j}}^{1},\ldots,\widetilde{\psi}_{x_{j}}^{t})^{T}\}_{j\in
J}$ is a dual frame of $\{\Psi_{x_{j}}\}_{j\in J}$ and the series
converges unconditionally in $V^2(\Phi)$. Since
$\langle f,\Psi_{x_{j}}\rangle=(f\ast\overrightarrow{\mu})(x_{j})$
for all $j\in J$, we get (\ref{3.8.5.1}).
\end{proof}

Next, we show that if the generator $\Phi$ and the measures $\overrightarrow{\mu}$ satisfy
 an appropriate decay condition then the $(\overrightarrow{\mu},X)$-sampling frame $\{\Psi_{x_j}\}$
obtained above is $s$-localized.

%Proposition4.4
\begin{prop}\label{prop4.4} Let $s>d$, $\Phi\in\mathcal{W}_{s}$, and
$\overrightarrow{\mu}\in(\mathcal{M}_{s}(\mathbb{R}^d))^{(t)}$.
If $X$ is a $\overrightarrow{\mu}$-sampling set for $V^2(\Phi)$, then the $(\overrightarrow{\mu},X)$-sampling frame $\{\Psi_{x_j}\}$ is
$s$-localized with respect to the Riesz basis
$\{\Phi_{k}\}_{k\in\mathbb{Z}^d}$.
\end{prop}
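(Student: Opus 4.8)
The plan is to invoke directly the defining property of the sampling frame established in Proposition \ref{prop4.3}, namely that $\langle f,\psi_{x_j}^l\rangle = (f\ast\mu^l)(x_j)$ for every $f\in V^2(\Phi)$, every $j\in J$, and every $1\leq l\leq t$. Since both the generators $\Phi_k$ and the elements $\widetilde{\Phi}_k$ of the dual Riesz basis lie in $V^2(\Phi)$, I can feed them into this identity and thereby reduce each of the two localization inequalities in Definition \ref{def3.2} to a pointwise decay estimate for a convolution of a function with the measures $\overrightarrow{\mu}$, which is exactly what Lemma \ref{lem4.4} supplies.

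For the first estimate \eqref{3.8.1}, I would take $f=\phi_k^i=\phi^i(\cdot-k)$ in the identity above. A change of variables gives $(\phi_k^i\ast\mu^l)(x_j)=(\phi^i\ast\mu^l)(x_j-k)$, so that $\langle\phi_k^i,\psi_{x_j}^l\rangle=(\phi^i\ast\mu^l)(x_j-k)$. Because $\Phi\in\mathcal{W}_s$, each component obeys $|\phi^i(x)|\leq C_0^i(1+|x|)^{-s}$, and hence Lemma \ref{lem4.4} yields $|(\phi^i\ast\mu^l)(y)|\leq C_1^{i,l}(1+|y|)^{-s}$. Recalling from Definition \ref{def3.2} that $|\langle\Phi_k,\Psi_{x_j}^T\rangle|=\sum_{i=1}^r\sum_{l=1}^t|\langle\phi_k^i,\psi_{x_j}^l\rangle|$, summing these bounds over $i$ and $l$ produces \eqref{3.8.1} with $C_1=\sum_{i,l}C_1^{i,l}$.

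The second estimate \eqref{3.8.2} follows the same pattern, now with $f=\widetilde{\phi}_k^i=\widetilde{\phi}^i(\cdot-k)$, where $\{\widetilde{\Phi}_k\}$ is the dual Riesz basis. Remark \ref{obs3.4} tells us that $\widetilde{\Phi}$ is again an $s$-localized Riesz generator, so its components inherit, up to a constant, the decay needed to invoke Lemma \ref{lem4.4} once more; indeed \eqref{3.8.5} already records the resulting bound $|(\widetilde{\Phi}\ast\overrightarrow{\mu})(x)|\leq D_1(1+|x|)^{-s}$. Translating as before gives $\langle\widetilde{\phi}_k^i,\psi_{x_j}^l\rangle=(\widetilde{\phi}^i\ast\mu^l)(x_j-k)$, and summing over $i$ and $l$ delivers \eqref{3.8.2} with $C_2=D_1$.

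There is no genuine obstacle here: the proposition is essentially a bookkeeping exercise once the two external inputs are in place. The only point requiring care is the clean identification $\langle\phi_k^i,\psi_{x_j}^l\rangle=(\phi^i\ast\mu^l)(x_j-k)$, which rests on the Riesz-representation characterization of $\psi_{x_j}^l$ from Proposition \ref{prop4.3} together with the shift-invariance that lets the index $k$ pass to a translation of the argument. Once that identity is recorded, both localization inequalities are immediate consequences of Lemma \ref{lem4.4} applied to $\Phi$ and to its dual generator $\widetilde{\Phi}$.
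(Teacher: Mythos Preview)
Your proposal is correct and follows essentially the same approach as the paper: both arguments identify $\langle\Phi_k,\Psi_{x_j}^T\rangle$ with $(\Phi\ast\overrightarrow{\mu})(x_j-k)$ via Proposition~\ref{prop4.3}, invoke Lemma~\ref{lem4.4} for the decay of $\Phi\ast\overrightarrow{\mu}$, and then repeat with $\widetilde{\Phi}$ using Remark~\ref{obs3.4}. Your version is simply more explicit at the component level, while the paper works directly with the vector/matrix notation.
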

%Proof of Proposition4.4
\begin{proof}
%Notice that our assumptions imply
%$\Phi\in(\mathit{W}_{0}^{1})^{(r)}$. Moreover, since
%$(\mathcal{M}_{s}(\mathbb{R}^d))^{(t)}\subset (\mathcal{M}(\mathbb{R}^d))^{(t)}$
% and $X$ is a $\overrightarrow{\mu}$-sampling set for
%$V^2(\Phi)$, then, by Proposition
%\ref{prop4.3}, there exists a frame $\{\Psi_{x_{j}}\}_{j\in J}$ for
%$V^2(\Phi)$ such that
%\begin{displaymath}
%\langle f,\Psi_{x_{j}}\rangle=(f\ast\overrightarrow{\mu})(x_{j}),
%\mbox{ for all } f\in V^2(\Phi).
%\end{displaymath}
Since $\{\Phi_{k}\}_{k\in\mathbb{Z}^d}$ is an $s$-localized Riesz generator for $V^2(\Phi)$, the components of $\Phi$ satisfy
(\ref{3.8.3}), %for $r_{0}=s+d+\nu_{0}$, then by
and Lemma \ref{lem4.4} implies
\begin{displaymath}
|\langle
\Phi_{k},\Psi_{x_{j}}^{T}\rangle|=|(\Phi\ast\overrightarrow{\mu})(x_{j}-k)|\leq
C_{1}(1+|x_{j}-k|)^{-s},
\end{displaymath}
for some $C_{1}>0$ independent of $j\in J$ and $k\in\mathbb{Z}^d$.
On the other hand, %since $\{\Phi_{k}\}_{k\in\mathbb{Z}^d}$ is a
%Riesz basis for $V^2(\Phi)$, %and the components of $\Phi$ satisfy (\ref{3.8.3}), then by
 it follows from Remark \ref{obs3.4}
that the dual Riesz basis $\{\widetilde{\Phi}_{k}\}_{k\in\mathbb{Z}^d}$ is also an $s$-localized Riesz generator for $V^2(\Phi)$, and its
components %$\widetilde{\phi}^{i}$, $1\leq i\leq r$,
 %of  generator $\widetilde{\Phi}$ of $\Phi$
 also satisfy (\ref{3.8.3}).
%for $r_{0}=s+d+\nu_{0}$.
Therefore, using  Lemma \ref{lem4.4} once again, we get
\begin{displaymath}
|\langle\widetilde{\Phi}_{k},\Psi_{x_{j}}^{T}\rangle|=|(\widetilde{\Phi}\ast\overrightarrow{\mu})(x_{j}-k)|\leq
D_{1}(1+|x_{j}-k|)^{-s},
\end{displaymath}
for some $D_{1}>0$ independent of $j\in J$ and $k\in\mathbb{Z}^d$.
Hence, $\{\Psi_{x_j}\}$ satisfies all conditions of Definition \ref{def3.2}.
\end{proof}

We conclude this subsection with the proof of the main result of section 3.2.
\medskip

%Proof of Theorem3.5
{\bf Proof of Theorem \ref{teo3.5}}
\begin{proof}
Assume the hypotheses of Theorem \ref{teo3.5}.
By Propositions \ref{prop4.3} and \ref{prop4.4}, there exists a $(\overrightarrow{\mu},X)$-sampling frame
$\{\Psi_{x_{j}}\}_{j\in J}$ for $V^2(\Phi)$, which is $s$-localized with
respect to the Riesz basis $\{\Phi_{k}\}_{k\in\mathbb{Z}^d}$  and satisfies
\begin{displaymath}
\langle f,\Psi_{x_{j}}\rangle=(f\ast\overrightarrow{\mu})(x_{j}),
\mbox{ for all } f\in V^2(\Phi).
\end{displaymath}
Moreover,
\begin{displaymath}
f=\sum_{j\in
J}(f\ast\overrightarrow{\mu})(x_{j})\widetilde{\Psi}_{x_{j}},
\mbox{ for all } f\in V^2(\Phi).
\end{displaymath}
Consequently, applying Theorem 10(c) in \cite{kg1}, we get
\begin{displaymath}
f=\sum_{j\in
J}(f\ast\overrightarrow{\mu})(x_{j})\widetilde{\Psi}_{x_{j}},
\mbox{ for all } f\in V^p(\Phi),
\end{displaymath}
where the series converges unconditionally in $V^p(\Phi)$, $1\leq p<\infty$.
Moreover, since $\{\Psi_{x_{j}}\}_{j\in J}$ is an
$s$-localized frame with respect to the Riesz basis
$\{\Phi_{k}\}_{k\in\mathbb{Z}^d}$, then Theorem 10(d) in
\cite{kg1} implies that  for each $1\leq p\leq\infty$ there exist
$0<A_{p}\leq B_{p}<\infty$ such that
\begin{displaymath}
A_{p}\|f\|_{L^p}\leq\|(f\ast\overrightarrow{\mu})(X)\|_{(\ell^p(J))^{(t)}}\leq
B_{p}\|f\|_{L^p}, \mbox{ for all } f\in V^p(\Phi),
\end{displaymath}
i.e., $X$ is a $\overrightarrow{\mu}$-sampling set for $V^p(\Phi)$ and
the theorem is proved.
\end{proof}

\subsection{Proofs for section 3.3.}\
\medskip

%%Lemma5.1
%\begin{lem}\label{lem5.1} Let $X$ be a sampling set for
%$V^{p}(\Phi)$and $\overrightarrow{\mu}$, and assume that
%(\ref{2.1.1}) holds. Let $0<\eta_{p}\leq \beta_{p}<\infty$
%satisfying (\ref{5.1}). If $\|U-U_{\Delta}\|<\eta_{p}$, then
%$X+\Delta$ is also a set of sampling for $V^{p}(\Phi)$ and
%$\overrightarrow{\mu}$.
%\end{lem}
%\begin{proof}
%Let $C\in(\ell^p(\mathbb{Z}^d))^{(r)}$ be given. Then
%\begin{eqnarray}
%\|U_{\Delta}C\|_{(\ell^p(J))^{(t)}}&\leq&\|(U-U_{\Delta})C\|_{(\ell^p(J))^{(t)}}+\|UC\|_{(\ell^p(J))^{(t)}}\nonumber\\
%&\leq&\|U-U_{\Delta}\|\|C\|_{(\ell^p(\mathbb{Z}^d))^{(r)}}+\beta_{p}\|C\|_{(\ell^p(\mathbb{Z}^d))^{(r)}}.\nonumber
%\end{eqnarray}
%Therefore, since $\|U-U_{\Delta}\|<\eta_{p}$, then we have
%\begin{equation}\label{5.1.1}
%\|U_{\Delta}C\|_{(\ell^p(J))^{(t)}}\leq\left(\eta_{p}+\beta_{p}\right)\|C\|_{(\ell^p(\mathbb{Z}^d))^{(r)}}.
%\end{equation}
%On the other hand, since
%\begin{eqnarray}
%\eta_{p}\|C\|_{(\ell^p(\mathbb{Z}^d))^{(r)}}&\leq&\|UC\|_{(\ell^p(J))^{(t)}}\leq\|(U-U_{\Delta})C\|_{(\ell^p(J))^{(t)}}+\|U_{\Delta}C\|_{(\ell^p(J))^{(t)}}\nonumber\\
%&\leq&\|U-U_{\Delta}\|\|C\|_{(\ell^p(\mathbb{Z}^d))^{(r)}}+\|U_{\Delta}C\|_{(\ell^p(J))^{(t)}}.\nonumber
%\end{eqnarray}
%Hence
%\begin{equation}\label{5.1.2}
%\left(\eta_{p}-\|U-U_{\Delta}\|\right)\|C\|_{(\ell^p(\mathbb{Z}^d))^{(r)}}\leq\|U_{\Delta}C\|_{(\ell^p(J))^{(t)}}.
%\end{equation}
%Therefore, if $\|U-U_{\Delta}\|<\eta_{p}$, then from (\ref{5.1.1}),
%(\ref{5.1.2}), and proposition \ref{prop5.1}, the conclusion of the
%lemma follows.
%\end{proof}

For the proof of Theorem \ref{teo5.2} we need the following two lemmas.

%Lemma 5.2
\begin{lem}\label{lem5.2} %Let $\epsilon>0$, and $X$ a sampling set for $V^{2}(\Phi)$ and $\overrightarrow{\mu}$ be given.
%Assume that (\ref{2.1.1}) takes place, and there exists
%$\gamma_{0}>0$ such that $\|U-U_{\Delta}\|<\epsilon$ whenever
%$\|\Delta\|_{\infty}\leq \gamma_{0}$.
Let the assumptions of Theorem \ref{teo5.2} hold.
Then
\[\|U^{*}U-U_{\Delta}^{*}U_{\Delta}\|<\epsilon\left(2\beta_p+\epsilon\right).\]
%whenever $\|\Delta\|_{\infty}\leq \gamma_{0}$.
\end{lem}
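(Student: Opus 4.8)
The plan is to reduce everything to the single controlled quantity $\|U-U_{\Delta}\|$ by means of a standard ``add and subtract'' splitting that is bilinear in the two operators. First I would record the algebraic identity
\[
U^{*}U - U_{\Delta}^{*}U_{\Delta} = U^{*}(U - U_{\Delta}) + (U - U_{\Delta})^{*}U_{\Delta},
\]
which is verified simply by expanding the right-hand side and cancelling the cross term $U^{*}U_{\Delta}$, using that the $*$-operation is additive, so that $(U-U_{\Delta})^{*} = U^{*} - U_{\Delta}^{*}$.

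Next I would take operator norms and apply the triangle inequality together with submultiplicativity to obtain
\[
\|U^{*}U - U_{\Delta}^{*}U_{\Delta}\| \le \|U^{*}\|\,\|U - U_{\Delta}\| + \|(U - U_{\Delta})^{*}\|\,\|U_{\Delta}\|.
\]
The factors are then estimated individually. By the upper bound in \eqref{5.1} we have $\|U\|\le\beta_p$, and since $\|U^{*}\|_{p,op}=\|U\|_{p,op}$ (recorded in the text preceding Theorem \ref{teo5.2}) this gives $\|U^{*}\|\le\beta_p$; the same norm-invariance of the $*$-operation yields $\|(U-U_{\Delta})^{*}\| = \|U - U_{\Delta}\|$. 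By hypothesis $\|U-U_{\Delta}\|<\epsilon$, and one further triangle inequality gives $\|U_{\Delta}\| \le \|U\| + \|U - U_{\Delta}\| < \beta_p + \epsilon$.

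Substituting these bounds, and noting that $\|U - U_{\Delta}\| < \epsilon$ enters each of the two terms as a strictly positive factor (with $\beta_p>0$ and $\|U_{\Delta}\|<\beta_p+\epsilon$), I would arrive at
\[
\|U^{*}U - U_{\Delta}^{*}U_{\Delta}\| < \beta_p\,\epsilon + \epsilon(\beta_p + \epsilon) = \epsilon(2\beta_p + \epsilon),
\]
which is the claim. The computation is essentially routine and I do not expect a genuine obstacle; the only points requiring a little care are the invariance of the operator norm under the $*$-operation and the bookkeeping needed to keep the final inequality strict. This elementary perturbation estimate is exactly what feeds the Neumann-series argument for the invertibility of $U_{\Delta}^{*}U_{\Delta}$ in Theorem \ref{teo5.2}, where the standing hypothesis $\epsilon < -\beta_p + \sqrt{\beta_p^{2}+\eta_p^{2}}$ (equivalently $\nu<1$) is finally used.
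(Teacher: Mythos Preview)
Your argument is correct and follows the same route as the paper: the same add-and-subtract splitting $U^{*}U-U_{\Delta}^{*}U_{\Delta}=U^{*}(U-U_{\Delta})+(U^{*}-U_{\Delta}^{*})U_{\Delta}$, the same use of $\|U^{*}\|=\|U\|$ and $\|U_{\Delta}\|\le\|U\|+\|U-U_{\Delta}\|$, and the same final arithmetic. Your extra care in justifying the strictness of the inequality is a nice touch that the paper leaves implicit.
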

\begin{proof}
%Let $\epsilon>0$ be given, and assume $\|\Delta\|_{\infty}\leq \gamma_{0}$.
Since $\|U\|=\|U^{*}\|$ and $\|U-U_{\Delta}\|=\|U^{*}-U_{\Delta}^{*}\|$, %then
\begin{eqnarray}
\|U^{*}U-U_{\Delta}^{*}U_{\Delta}\|&=&\|U^{*}U-U^{*}U_{\Delta}+U^{*}U_{\Delta}-U_{\Delta}^{*}U_{\Delta}\|\nonumber\\
&=&\|U^{*}(U-U_{\Delta})+(U^{*}-U_{\Delta}^{*})U_{\Delta}\|\nonumber\\
&\leq&\|U^{*}\|\|U-U_{\Delta}\|+\|U^{*}-U_{\Delta}^{*}\|\|U_{\Delta}\|\nonumber\\
&\leq&\|U-U_{\Delta}\|\left(\|U\|+\|U_{\Delta}\|\right)\nonumber\\
&\leq&\|U-U_{\Delta}\|\left(2\|U\|+\|U-U_{\Delta}\|\right)\nonumber\\
&\leq&\epsilon\left(2\beta_p+\epsilon\right),\nonumber
\end{eqnarray}
and the lemma is proved.
\end{proof}
%Lemma 5.3
\begin{lem}\label{lem5.3}
Let the assumptions of Theorem \ref{teo5.2} hold.
%Let
%$0<\epsilon<-\beta+\sqrt{\beta^{2}+\eta^{2}}$, where $0<\eta\leq
%\beta<\infty$ are constants satisfying (\ref{5.1}) when $p=2$.
%Assume there exists $\gamma_{0}>0$ such that
%$\|\Delta\|_{\infty}<\gamma_{0}$, and define
%$\nu=\nu(\epsilon)=\eta^{-2}\epsilon\left(\epsilon+2\beta\right)$.
Then $0<\nu<1$, $(U_{\Delta}^{*}U_{\Delta})^{-1}$ exists, and
$\|(U^{*}U)^{-1}-(U_{\Delta}^{*}U_{\Delta})^{-1}\|<\frac{\nu}{\eta_p^{2}\left(1-\nu\right)}$.
\end{lem}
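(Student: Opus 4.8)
The plan is to reduce the whole statement to a single Neumann series argument for the bounded operator $U^{*}U$ acting on $(\ell^p(\mathbb{Z}^d))^{(r)}$, the only genuinely new ingredient being a lower bound for $U^{*}U$ extracted from the two-sided estimates \eqref{5.1} and \eqref{5.1.1}. First I would dispose of the claim $0<\nu<1$. Positivity is immediate since $\epsilon,\beta_p,\eta_p>0$. For $\nu<1$ note that $\nu=\eta_p^{-2}\epsilon(\epsilon+2\beta_p)<1$ is equivalent to $\epsilon^{2}+2\beta_p\epsilon-\eta_p^{2}<0$; the positive root of $x^{2}+2\beta_px-\eta_p^{2}$ is exactly $-\beta_p+\sqrt{\beta_p^{2}+\eta_p^{2}}$, and since $\epsilon$ is assumed to lie in $\bigl(0,-\beta_p+\sqrt{\beta_p^{2}+\eta_p^{2}}\bigr)$, the quadratic is negative there, giving $\nu<1$.

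Next I would show that $U^{*}U$ is bounded below by $\eta_p^{2}$. Chaining the two estimates: for arbitrary $C\in(\ell^p(\mathbb{Z}^d))^{(r)}$ put $D=UC\in(\ell^p(J))^{(t)}$; then \eqref{5.1.1} gives $\|U^{*}UC\|=\|U^{*}D\|\ge\eta_p\|D\|=\eta_p\|UC\|$, and \eqref{5.1} gives $\|UC\|\ge\eta_p\|C\|$, so $\|U^{*}UC\|\ge\eta_p^{2}\|C\|$. Because $U^{*}U$ is invertible by hypothesis, this yields $\|(U^{*}U)^{-1}\|\le\eta_p^{-2}$. Now set $R=(U^{*}U)^{-1}(U^{*}U-U_{\Delta}^{*}U_{\Delta})$, so that $U_{\Delta}^{*}U_{\Delta}=(U^{*}U)(I-R)$. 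By Lemma \ref{lem5.2} together with the bound just obtained, $\|R\|\le\|(U^{*}U)^{-1}\|\,\|U^{*}U-U_{\Delta}^{*}U_{\Delta}\|<\eta_p^{-2}\,\epsilon(2\beta_p+\epsilon)=\nu<1$. Hence $I-R$ is invertible via its Neumann series, and $U_{\Delta}^{*}U_{\Delta}=(U^{*}U)(I-R)$ is invertible as a product of invertible operators, with $(U_{\Delta}^{*}U_{\Delta})^{-1}=(I-R)^{-1}(U^{*}U)^{-1}$.

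Finally, for the norm estimate I would use the identity $(U_{\Delta}^{*}U_{\Delta})^{-1}-(U^{*}U)^{-1}=\bigl[(I-R)^{-1}-I\bigr](U^{*}U)^{-1}=(I-R)^{-1}R\,(U^{*}U)^{-1}$, and bound the three factors by $\|(I-R)^{-1}\|\le(1-\|R\|)^{-1}$, $\|R\|<\nu$, and $\|(U^{*}U)^{-1}\|\le\eta_p^{-2}$. Since $t\mapsto t/(1-t)$ is increasing on $[0,1)$ and $\|R\|<\nu$, this produces $\|(U^{*}U)^{-1}-(U_{\Delta}^{*}U_{\Delta})^{-1}\|<\frac{\nu}{\eta_p^{2}(1-\nu)}$, as required. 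The argument is otherwise routine once Lemma \ref{lem5.2} is available; the one step that demands care is the lower bound $\|(U^{*}U)^{-1}\|\le\eta_p^{-2}$, which for $p\neq2$ cannot simply be read off from self-adjointness (here $U^{*}$ is merely the formal transpose, not a Hilbert-space adjoint) and must instead be produced by composing \eqref{5.1} and \eqref{5.1.1} as above.
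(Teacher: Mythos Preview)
Your proof is correct and follows essentially the same route as the paper: factor $U_{\Delta}^{*}U_{\Delta}=(U^{*}U)(I-R)$ with $R=(U^{*}U)^{-1}(U^{*}U-U_{\Delta}^{*}U_{\Delta})$ (the paper's $N=-R$), bound $\|R\|<\nu$ via Lemma~\ref{lem5.2} and $\|(U^{*}U)^{-1}\|\le\eta_p^{-2}$, invert by Neumann series, and read off the difference estimate. The only cosmetic difference is that you verify $\nu<1$ directly from the quadratic in $\epsilon$, whereas the paper obtains it as a byproduct of the operator-norm chain; your remark that for $p\neq2$ the bound $\|(U^{*}U)^{-1}\|\le\eta_p^{-2}$ must come from composing \eqref{5.1} and \eqref{5.1.1} rather than self-adjointness is exactly the point the paper is implicitly using.
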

\begin{proof}
Since $(U^{*}U)^{-1}$ exists, %because $X$ is a set of sampling for $V^{2}(\Phi)$ and $\overrightarrow{\mu}$. Then
\begin{equation}\label{5.3.1}
U_{\Delta}^{*}U_{\Delta}=U^{*}U\left(I+(U^{*}U)^{-1}\left(U_{\Delta}^{*}U_{\Delta}-U^{*}U\right)\right).
\end{equation}
%Notice that
From \eqref{5.1} and \eqref{5.1.1} we get that  for all $C\in(\lp(\mathbb{Z}^d))^{(r)}$
\begin{displaymath}
\frac{1}{\beta_p^{2}}
\|C\|_{(\lp(\mathbb{Z}^d))^{(r)}}\leq\|(U^{*}U)^{-1}C\|_{(\lp(\mathbb{Z}^d))^{(r)}}\leq\frac{1}{\eta_p^{2}}
\|C\|_{(\lp(\mathbb{Z}^d))^{(r)}}.
\end{displaymath}
From the above inequalities and Lemma \ref{lem5.2} we have
\begin{eqnarray}
&\|(U^{*}U)^{-1}\left(U_{\Delta}^{*}U_{\Delta}-U^{*}U\right)\| \leq\|(U^{*}U)^{-1}\|\|U_{\Delta}^{*}U_{\Delta}-U^{*}U\|\nonumber\\
&\leq \frac{1}{\eta_p^{2}} \epsilon\left(2\beta_p+\epsilon\right) <
\frac{1}{\eta_p^{2}} (-\beta_p+\sqrt{\beta_p^{2}+\eta_p^{2}})\left(2\beta_p-\beta_p+\sqrt{\beta_p^{2}+\eta_p^{2}}\right) =1. \nonumber
\end{eqnarray}
Hence, $\nu = \frac{1}{\eta_p^{2}} \epsilon\left(2\beta_p+\epsilon\right)\in(0,1)$. To simplify the notation, we define
\begin{displaymath}
M:=U^{*}U,\quad
M_{\Delta}:=U_{\Delta}^{*}U_{\Delta},\ \textrm{and}\
N:=(U^{*}U)^{-1}\left(U_{\Delta}^{*}U_{\Delta}-U^{*}U\right).
\end{displaymath}
Since $\|N\|\leq\nu<1$, then $(I+N)^{-1}$ exists and is given by the
Neumann series
\begin{displaymath}
(I+N)^{-1}=\sum_{q=0}^{\infty}(-1)^{q}N^{q}.
\end{displaymath}
From (\ref{5.3.1}) we obtain
\begin{equation}\label{5.3.2}
M_{\Delta}^{-1}=\left[M(I+N)\right]^{-1}=(I+N)^{-1}M^{-1}.
\end{equation}
Therefore, $M_{\Delta}^{-1}=(U_{\Delta}^{*}U_{\Delta})^{-1}$ exists.
%exists whenever $\|\Delta\|_{\infty}<\gamma_{0}$.

Now we need to give an upper bound for $\|M^{-1}-M_{\Delta}^{-1}\|$. %Assume $\|\Delta\|_{\infty}<\gamma_{0}$.
Using (\ref{5.3.2}) we obtain
\begin{displaymath}
M^{-1}-M_{\Delta}^{-1}=N(I+N)^{-1}M^{-1}.
\end{displaymath}
Consequently,
\begin{eqnarray}
\|M^{-1}-M_{\Delta}^{-1}\|&\leq&\|N\|\|(I+N)^{-1}\|\|M^{-1}\|\nonumber\\
&\leq&\frac{\|N\|}{1-\|N\|}\|M^{-1}\|\leq
\frac{\nu}{1-\nu}\eta_p^{-2},\nonumber\\
\end{eqnarray}
and the lemma is proved.
\end{proof}

{\bf Proof of theorem \ref{teo5.2}.}

\begin{proof}
%Let $\|\Delta\|_{\infty}<\gamma_{0}$.
Using the notations from Lemmas \ref{lem5.2}, \ref{lem5.3}, and the previous proofs, we get
\begin{eqnarray}
\|(U^{*}U)^{-1}U^{*}-(U_{\Delta}^{*}U_{\Delta})^{-1}U_{\Delta}^{*}\|=\|M^{-1}U^{*}-M_{\Delta}^{-1}U_{\Delta}^{*}\|\nonumber\\
=\|M^{-1}U^{*}-M^{-1}U_{\Delta}^{*}+M^{-1}U_{\Delta}^{*}-M_{\Delta}^{-1}U_{\Delta}^{*}\|\nonumber\\
=\|M^{-1}(U^{*}-U_{\Delta}^{*})+(M^{-1}-M_{\Delta}^{-1})U_{\Delta}^{*}\|\nonumber\\
\leq\|M^{-1}\|\|U^{*}-U_{\Delta}^{*}\|+\|M^{-1}-M_{\Delta}^{-1}\|\|U_{\Delta}^{*}\|\nonumber\\
\leq\frac{1}{\eta_p^{2}}\left(\epsilon+\frac{\nu\left(\epsilon+\beta_p\right)}{1-\nu}\right).\nonumber
\end{eqnarray}
\end{proof}

%Proof of Theorem \lastbig
{\bf Proof of Theorem \ref{lastbig}.}
\begin{proof}
Let $U_\Delta$ be the sampling operator for a perturbed sampling model $(X+\Delta, \Theta, \overrightarrow{\alpha})$.
Let also  $C\in (\lp(\GG))^{(r)}$, $f=\sum_{k\in
\mathbb{Z}^d}C_{k}^{T}\Phi_{k}$, and $g=\sum_{k\in
\mathbb{Z}^d}C_{k}^{T}\Theta_{k}$.
Then
\[
\norm{R(g\ast\overrightarrow{\alpha})(X+\Delta)-f}_{L^p} \le M_p
\norm{((U^*U)^{-1}U^*U_\Delta C- C)}_{\ell^p}.
\]
It remains to apply Theorem \ref{teo5.2} to finish the proof.
\end{proof}

%Proof of corollary 5
%{\bf Proof of corollary \ref{coro5}.}
%\begin{proof}
%Let $f\in V^2(\Phi)$ be given by $f=\sum_{k\in
%\mathbb{Z}^d}C_{k}^{T}\Phi_{k}$. Since
%$\Phi\in(\mathit{W}_{0}^{1})^{(r)}$ satisfies (\ref{2.1.1}) when
%$p=2$, and $(f\ast\overrightarrow{\mu})(X+\Delta)=U_{\Delta}C$ then
%there exists a constant $M_{2}>0$ independent of $f$,
%$\overrightarrow{\mu}$, $X$, and $\Delta$ such that
%\begin{displaymath}
%\|R_{X}^{(\overrightarrow{\mu},
%\Phi)}(f\ast\overrightarrow{\mu})(X+\Delta)-f\|_{L^2}\leq
%M_{2}\|(U^{*}U)^{-1}U^{*}U_{\Delta}C-C\|_{(l^2(\mathbb{Z^d}))^{(r)}}.
%\end{displaymath}
%Taking into account that
%\begin{displaymath}
%(U^{*}U)^{-1}U^{*}U_{\Delta}C-C=(U^{*}U)^{-1}U^{*}[U_{\Delta}-U]C,
%\end{displaymath}
%then we obtain
%\begin{displaymath}
%\|U(U^{*}U)^{-1}U^{*}U_{\Delta}C-C\|_{(l^2(\mathbb{Z^d}))^{(r)}} \leq \|(U^{*}U)^{-1}U^{*}\|_{2,op}\|U_{\Delta}-U\|_{2,op}\|C\|_{(l^2(\mathbb{Z}^d))^{(r)}}.
%\end{displaymath}
%Now the conclusion of Corollary \ref{coro5} follows from Theorem
%\ref{teo5.1}.
%\end{proof}
%Proof of corollary 5.1

{\bf Proof of Theorem \ref{lastcombine}.}
\begin{proof}
Assume the hypotheses of Theorem \ref{lastcombine}. From Theorem \ref{teo3.5} we know that,
in this case, the sampling model $(X,\Phi,\overrightarrow{\mu})$ is $p$-stable for
every $p\in[1,\infty]$. Hence, in view of Theorem \ref{lastbig}, the only thing that we need to prove
is that the operator $U^*U$ is invertible for all $p\in[1,\infty]$ and not just for $p=2$.

Taking into account that for each $1\leq i\leq r$ and $1\leq l\leq
t$ the entries of the matrix of the operator $U^{i,l}$ satisfy
\begin{displaymath}
|(U^{i,l})_{j,k}|=|(\phi^{i}\ast\mu^{l})(x_{j}-k)|\leq
C_{1}(1+|x_{j}-k|)^{-s},
\end{displaymath}
for some $C_{1}>0$ independent of $j\in J$ and $k\in\mathbb{Z}^d$,
it follows from Lemma 3 in \cite{kg1} that the matrix of $U$ defines
a bounded linear operator from
$(\ell^p(\mathbb{Z}^d))^{(r)}\rightarrow(\ell^p(J))^{(t)}$ for all $1\leq
p\leq \infty$. Hence, $U^{*}$ is also well defined as a bounded
linear operator from
$(\ell^p(J))^{(t)}\rightarrow(\ell^p(\mathbb{Z}^d))^{(r)}$, and, therefore,
$U^{*}U
:(\ell^p(\mathbb{Z}^d))^{(r)}\rightarrow(\ell^p(\mathbb{Z}^d))^{(r)}$ is a
well defined and bounded operator for all $1\leq p\leq\infty$. On the
other hand, since the operator $U^{*}U$ is invertible
on $(\ell^2(\mathbb{Z}^d))^{(r)}$ and its components $(M^{i,l})_{j,k}$, $1\leq i\leq r$, $1\leq l\leq r$,
satisfy a decay condition
\begin{displaymath}
|(M^{i,l})_{j,k}|\leq C_{2}(1+|x_{j}-k|)^{-s},
\end{displaymath}
for some $C_{2}>0$ independent of $j\in J$ and $k\in\mathbb{Z}^d$,
then Jaffard's Lemma (see Theorem 5 in \cite{kg1}) implies
that
$(U^{*}U)^{-1}:(\lt(\mathbb{Z}^d))^{(r)}\rightarrow(\lt(\mathbb{Z}^d))^{(r)}$
is also a bounded linear operator defined by a matrix satisfying the same
off-diagonal decay condition as $U^{*}U$. Consequently,
using  Lemma 3 in \cite{kg1} once again, we get that the matrix of $(U^{*}U)^{-1}$
defines a bounded linear operator on
$(\ell^p(\mathbb{Z}^d))^{(r)}$ %\rightarrow(\ell^p(\mathbb{Z}^d))^{(r)}$
for all $1\leq p\leq\infty$.
The theorem is proved.
\epf

%__________________________________________________________________________________________________________________________
%References


\begin{thebibliography}{99}

\bibitem{aa}A. Aldroubi, Non-uniform average sampling and
reconstruction in shift-invariant and wavelet spaces, \emph{Appl.
Comput. Harmon. Anal.} \textbf{13}(2002)151-161.

\bibitem{ABK}A. Aldroubi, A. Baskakov, and I. Krishtal, Slanted matrices and their various applications,
\emph{submitted}.

\bibitem{aagr1}A. Aldroubi, K. Gr\"{o}chenig, Beurling-Landau-type theorems for non-uniform sampling in shift
invariant spline spaces, \emph{J. Fourier Anal. Appl.}
\textbf{6}(2000)93-103.

\bibitem{aagr2}A. Aldroubi, K. Gr\"{o}chenig, Nonuniform
sampling and reconstruction in shift-invariant spaces, \emph{SIAM
Rev.} \textbf{43}(2001)585-620.

\bibitem{aaik}A. Aldroubi, I. Krishtal, Robustness of sampling and
reconstruction and Beurling-Landau-type theorems for shift-invariant
spaces, \emph{Appl. Comput. Harmon. Anal.} \textbf{20}(2006)250-260.

\bibitem{aacl}A. Aldroubi, C. Leonetti, Non-uniform sampling and
reconstruction from sampling sets with unknown jitter, \emph{to
appear in STSIP.}

\bibitem {ABK02}  N.~Atreas, N.~Bagis, and C.~ Karanikas, The information loss error and the jitter error for regular sampling expansions, \emph{Sampl. Theory Signal Image Process.} \textbf{1} (2002), no. 3, 261--276.

\bibitem {BF01} J.J.~ Benedetto, and P. J. S. G.~Ferreira, Paulo, \emph{Modern sampling theory}, Appl. Numer. Harmon. Anal., BirkhÉuser Boston, Boston, MA, 2001.

\bibitem {BS01} J.J.~Benedetto, JS.~Scott, Frames, irregular sampling, and a wavelet auditory model, \emph{Nonuniform sampling}, 585--617, Inf. Technol. Transm. Process. Storage, Kluwer/Plenum, New York, 2001.

\bibitem{bas}
A.G. Baskakov, Asymptotic estimates for elements of matrices of
inverse operators, and harmonic analysis, (Russian) \emph{Sibirsk.
Mat. Zh.} \textbf{38} (1997), no. 1, 14--28; \emph{translation in
Siberian Math. J.} \textbf{38} (1997), no. 1, 10--22.
 
\bibitem {CIS02} W.~Chen, S.~Itoh, and J.~Shiki, On Sampling in Shift Invariant Spaces, \emph{IEEE Trans. on Information Theory}, \textbf{48} (2002), no. 10, 2802--2809.


\bibitem {DS52}  R. J. Duffin and A. C. Schaeffer, A class of nonharmonic Fourier
series, \emph{Trans. Amer. Math. Soc.}, \textbf{72} (1952), pp.
341-366.

\bibitem{hgf}H. G. Feichtinger, Wiener amalgam over Euclidean
spaces and some of their applications, in: K. Jarosz (Ed.) Proc.
Conf. Function Spaces, Edwardsville, IL, USA, 1990, in:
\emph{Lecture Notes in Pure and Appl. Math.}, vol.~136(1992)107-121.

\bibitem {FS06} H.~G.~Feichtinger, and W.~ Sun, Stability of Gabor frames with arbitrary sampling points, \emph{Acta Math. Hungar.} \textbf{113} (2006), no. 3, 187--212.

\bibitem{kg1}K. Gr\"{o}chenig, Localization of Frames, Banach Frames, and the Invertibility of the
Frame Operator, \emph{J. Fourier Anal. Appl.},
\textbf{10}(2004)105-132.

\bibitem{kgml}K. Gr\"{o}chenig and M. Leinart, Wiener's Lemma for Twisted Convolution and Gabor Frames, \emph{J. Amer. Math.
Soc.} \textbf{17} (2004), 1--18.

\bibitem {NW91} Z.M~Nashed, and G.G.~ Walter, General sampling theorems for functions in reproducing kernel Hilbert spaces, \emph{Math. Control Signals Systems} \textbf{4} (1991), no. 4, 363--390.

\bibitem {SUN06}Q.~Sun, Non-uniform average sampling and reconstruction of signals with finite rate of innovation, \emph{SIAM Journal of Mathematical Analysis} \textbf{38}(2006), 1389--1422.


\bibitem {SZ03} W.~Sun, X.~ Zhou, Reconstruction of functions in spline subspaces from local averages, \emph{Proc. Amer. Math. Soc.} \textbf{131} (2003), no. 8, 2561--2571.

\bibitem{SZ03b} W.~ Sun, X.~Zhou, Average sampling in shift invariant subspaces with symmetric averaging functions, \emph{J. Math. Anal. Appl.} \textbf{287} (2003), no. 1, 279--295.

\bibitem {S05} W.~Sun, Sampling theorems for multivariate shift invariant subspaces, \emph{Sampl. Theory Signal Image Process.} \textbf{4} (2005), no. 1, 73--98.

\bibitem {U00}M.~Unser, Sampling"50 Years After Shannon, \emph{Proceedings of the IEEE}, \textbf{88} (2000), no. 4, pp. 569-587.


\end{thebibliography}
\end{document}